\documentclass[10pt]{article}
\usepackage{epsf}
\usepackage{amsmath}

\allowdisplaybreaks

\usepackage[showframe=false]{geometry}
\usepackage{changepage}

\usepackage{epsfig}
\usepackage{amssymb}

\usepackage{amsthm}
\usepackage{setspace}
\usepackage{cite}
\usepackage{mcite}

\usepackage{algorithmic}  % This package provides an algorithmic environment fo describing algorithms.
\usepackage{algorithm}

\usepackage{shadow}
\usepackage{fancybox}
\usepackage{fancyhdr}

\usepackage{color}
\usepackage[usenames,dvipsnames,svgnames,table]{xcolor}
\newcommand{\bl}[1]{\textcolor{blue}{#1}}

\definecolor{mypurple}{rgb}{.4,.0,.5}
\newcommand{\prp}[1]{\textcolor{mypurple}{#1}}

%%%%%%%%%%%%%%%%%%%%%%%%%%%%%%%%%%%%%%%%%%%%%%%%%%%%%%%%%%%%%%%%%%%%%%%%%%
\usepackage[hyphens]{url}

\usepackage[colorlinks=true,
            linkcolor=black,
            urlcolor=blue,
            citecolor=purple]{hyperref}

\usepackage{breakurl}
%%%%%%%%%%%%%%%%%%%%%%%%%%%%%%%%%%%%%%%%%%%%%%%%%%%%%%%%%%%%%%

\def\y{{\bf y}}

\def\x{{\bf x}}

% Example definitions.
% --------------------
\def\x{{\mathbf x}}

\def\u{{\bf u}}

\def\x{{\bf x}}
\def\y{{\bf y}}

\def\h{{\bf h}}

\def\be{\begin{equation}}
\def\ee{\end{equation}}
\def\ba{\left[\begin{array}}
\def\ea{\end{array}\right]}

\def\u{{\bf u}}

\def\x{{\bf x}}
\def\y{{\bf y}}

\def\1{{\bf 1}}

\def\0{{\bf 0}}

\def\calX{{\cal X}}
\def\calY{{\cal Y}}

%%%%%%%%%%strong

%%%%%%%%%%%%%weak

%%%%%%%%%%%%%%%%%%%%%%sec

%%%%%%%%%%%%%weaksigned

%%%%%%%%%%%%%weak

%\def\hw{\bar{\H}}

%%%%%%%%%%%%%%%%%%%%%%math R E
\def\mR{{\mathbb R}}

\def\mE{{\mathbb E}}

\def\lp{\left (}
\def\rp{\right )}

\newtheorem{theorem}{Theorem}
\newtheorem{corollary}{Corollary}

\setlength{\oddsidemargin}{0in} \setlength{\evensidemargin}{0in}
\setlength{\textwidth}{6.5in} %old value 6.5in
\setlength{\textheight}{9in} %old value 8.6in
\setlength{\topmargin}{-0.25in}

\begin{document}

\begin{singlespace}

\title {Fully bilinear generic and lifted random processes comparisons %A tight variant of Gordon's escape through a mesh theorem
%\footnote{ This work was supported in
%part.}
}
\author{
\textsc{Mihailo Stojnic
\footnote{e-mail: {\tt flatoyer@gmail.com}} }}
\date{}
\maketitle

\centerline{{\bf Abstract}} \vspace*{0.1in}

In our companion paper \cite{Stojnicgscomp16} we introduce a collection of fairly powerful statistical comparison results.
They relate to a general comparison concept and its an upgrade that we call lifting procedure. Here we provide a different generic principle (which we call fully bilinear) that in certain cases turns out to be stronger than the corresponding one from \cite{Stojnicgscomp16}. Moreover, we also show how the principle that we introduce here can also be pushed through the lifting machinery of \cite{Stojnicgscomp16}. Finally, as was the case in \cite{Stojnicgscomp16}, here we also show how the well known Slepian's max and Gordon's minmax comparison principles can be obtained as special cases of the mechanisms that we present here. We also create their lifted upgrades which happen to be stronger than the corresponding ones in \cite{Stojnicgscomp16}. A fairly large collection of results obtained through numerical experiments is also provided. It is observed that these results are in an excellent agreement with what the theory predicts.

\vspace*{0.25in} \noindent {\bf Index Terms: Random processes; comparison principles, lifting}.

\end{singlespace}

%%%%%%%%%%%%%%%%%%%%%%%%%%%%%%%%%%%%%%%%%%%%%%%%%%%%%%%%%%%%%%%%%
\section{Introduction}
\label{sec:back}
%%%%%%%%%%%%%%%%%%%%%%%%%%%%%%%%%%%%%%%%%%%%%%%%%%%%%%%%%%%%%%%%%

The main topic of this paper are random processes comparisons. This topic has been studied for quite some time and many excellent results were obtained in various directions over the last half a century. In our view the major highlights that have found a large spectrum of applications are
the Slepian's max \cite{Slep62} and the Gordon's minmax \cite{Gordon85} principle (see also \cite{Sudakov71,Fernique74,Fernique75,Kahane86}). The list of applications in various fields is of course pretty much endless. As comparison principles are also the main topic of our companion paper \cite{Stojnicgscomp16} we will refrain from further detailing about their importance and the history of their development (more on this can be found in e.g. \cite{Adler90,Lifshits85,LedTal91,Tal05}). Instead, we here single out that, through studying the performance characterizations of many hard random optimization problems, we in recent years also fairly often utilized as the main probabilistic foundation the comparison principles (see, e.g. \cite{StojnicISIT2010binary,StojnicCSetam09,StojnicUpper10,StojnicCSetamBlock09,StojnicICASSP10knownsupp} and references therein). In fact, not only were our techniques strong enough to handle many of these problems they also turned out to be capable of doing it on an ultimate precision level. On the other hand, some of the results that we initially created in e.g. \cite{StojnicISIT2010binary,StojnicCSetam09,StojnicUpper10,StojnicCSetamBlock09,StojnicICASSP10knownsupp}, we later on managed to substantially upgrade (more on this can be found in, e.g. \cite{StojnicLiftStrSec13,StojnicMoreSophHopBnds10,StojnicRicBnds13} and references therein). The foundational blocks of these upgrades were actually rooted in core upgrades in the underlying random processes' comparisons.

As it will be rather clear on quite a few occasions throughout the paper, we view, the Slepian's max \cite{Slep62} and the Gordon's minmax \cite{Gordon85} comparison principles as two of the most influential results not only in the comparison theory but pretty much in a large section of the general probability theory. Both of them are derived basically starting almost from the axioms and with very minimal prior knowledge (a fairly short line of work, e.g. \cite{Schlafli858,Placket54,Chover61}, precedes Slepain's on the one hand and almost nothing besides Slepian's work precedes the direction of the Gotrdon's work on the other hand). In this paper we will deal with generic comparison principles that will not directly relate to the extrema of the random processes as, to a large degree, do Slepian's and Gordon's work. However, we will also show how easily a set of particularly useful forms of both of these classical achievements can be deduced from what we will present here.

In our companion paper \cite{Stojnicgscomp16} we also introduce a generic comparison principle that can be simplified in certain scenarios to include the above mentioned classical max and minmax forms. The mechanism that we introduce here is conceptually different and in certain cases of particular interest (such as dealing with the extrema of the random processes) it will produce a stronger set of results than those presented in \cite{Stojnicgscomp16}. Nonetheless, quite a few observations made in \cite{Stojnicgscomp16} will turn out to be of use here as well and we will try to follow the style of the presentation given in \cite{Stojnicgscomp16} so that all the similarities and differences are easier to see.

Along the same lines and following into the footsteps of \cite{Stojnicgscomp16}, we will split the presentation into two main parts: 1) the first part where we will discuss a generic comparison principle (to which we will refer as fully bilinear) and its connections with the well-known Slepian's max and Gordon's minmax principles; and 2) the second part where we will discuss a way to upgrade these generic methods through a lifting procedure similar to the one that we consider in \cite{Stojnicgscomp16}.

%%%%%%%%%%%%%%%%%%%%%%%%%%%%%%%%%%%%%%%%%%%%%%%%%%%%%%%%%%%%%%%%%
\section{A bilinear comparison form}
\label{sec:gencon}
%%%%%%%%%%%%%%%%%%%%%%%%%%%%%%%%%%%%%%%%%%%%%%%%%%%%%%%%%%%%%%%%%

We start with two given sets, say set $\calX=\{\x^{(1)},\x^{(2)},\dots,\x^{(l)}\}$, where $\x^{(i)}\in \mR^n,1\leq i\leq l$, and set $\calY=\{\y^{(1)},\y^{(2)},\dots,\y^{(l)}\}$, where $\y^{(i)}\in \mR^m,1\leq i\leq l$, and consider the following function
\begin{eqnarray}\label{eq:genanal1}
 f(G,u^{(4)},\calX,\calY,\beta,s)= \frac{1}{\beta|s|\sqrt{n}} \log\lp \sum_{i_1=1}^{l}\lp\sum_{i_2=1}^{l}e^{\beta \lp (\y^{(i_2)})^T
 G\x^{(i_1)}+\|\x^{(i_1)}\|_2\|\y^{(i_2)}\|_2 u^{(4)}\rp} \rp^{s}\rp,
\end{eqnarray}
where $s$ and $\beta>0$ are real parameters. Similarly to what we did in our companion paper \cite{Stojnicgscomp16}, we will study this function in a random medium. Namely, we will consider $(m\times n)$ dimensional matrices  $G\in \mR^{m\times n}$ with i.i.d. standard normal components. Moreover, we will assume that $u^{(4)}$ is also a standard normal random variable but independent of $G$. In such a random medium (and especially if the dimensions of $G$ are large) the expected value of the above function is usually its most relevant value. Let this expected value be $\xi(\calX,\calY,\beta,s)$. Then we set
\begin{eqnarray}\label{eq:genanal2}
\xi(\calX,\calY,\beta,s) & \triangleq  & \mE_{G,u^{(4)}} f(G,u^{(4)},\calX,\calY,\beta,s) \nonumber \\
& = & \mE_{G,u^{(4)}}\frac{1}{\beta|s|\sqrt{n}} \log\lp \sum_{i_1=1}^{l}\lp\sum_{i_2=1}^{l}e^{\beta \lp (\y^{(i_2)})^T
 G\x^{(i_1)}+\|\x^{(i_1)}\|_2\|\y^{(i_2)}\|_2 u^{(4)}\rp} \rp^{s}\rp.
\end{eqnarray}
Following into the footsteps of \cite{Stojnicgscomp16}, we will consider the following interpolating function $\psi(\cdot)$ as an object convenient for studying properties of $\xi(\calX,\calY,\beta,s)$
\begin{multline}\label{eq:genanal3}
\psi(\calX,\calY,\beta,s,t)  =  \mE_{G,u^{(4)},\u^{(2)},\h} \frac{1}{\beta|s|\sqrt{n}} \\
 \times \log\lp \sum_{i_1=1}^{l}\lp\sum_{i_2=1}^{l}e^{\beta \lp \sqrt{t}(\y^{(i_2)})^T
 G\x^{(i_1)}+\sqrt{1-t}\|\x^{(i_2)}\|_2 (\y^{(i_2)})^T\u^{(2)}+\sqrt{t}\|\x^{(i_1)}\|_2\|\y^{(i_2)}\|_2,u^{(4)} +\sqrt{1-t}\|\y^{(i_2)}\|_2\h^T\x^{(i)}\rp} \rp^{s}\rp.
\end{multline}
In (\ref{eq:genanal3}), $\u^{(2)}$ and $\h$ are $m$ and $n$ dimensional vectors of i.i.d standard normals, respectively; they are assumed to be independent of each other and of $G$ and $u^{(4)}$ ($\mE$ denotes the expectation with respect to any randomness under the expectation; sometimes $\mE$ will have a subscript to emphasize the underlying randomness). Clearly, $\xi(\calX,\calY,\beta,s)=\psi(\calX,\calY,\beta,s,1)$ and given that $\psi(\calX,\calY,\beta,s,0)$ is typically easier to study than $\psi(\calX,\calY,\beta,s,1)$ we will try to connect $\psi(\calX,\calY,\beta,s,1)$ to $\psi(\calX,\calY,\beta,s,0)$ as a way of connecting $\xi(\calX,\calY,\beta,s)$ to $\psi(\calX,\calY,\beta,s,0)$. We will find it convenient below to set
\begin{eqnarray}\label{eq:genanal4}
\u^{(i_1,1)} & =  & \frac{G\x^{(i_1)}}{\|\x^{(i_1)}\|_2} \nonumber \\
\u^{(i_1,3)} & =  & \frac{\h^T\x^{(i_1)}}{\|\x^{(i_1)}\|_2}.
\end{eqnarray}
Denoting by $G_{j,1:n}$ the $j$-th row of $G$ and by $\u_j^{(i_1,1)}$ the $j$-th component of $\u^{(i_1,1)}$ from (\ref{eq:genanal4}) we have
\begin{eqnarray}\label{eq:genanal5}
\u_j^{(i_1,1)} & =  & \frac{G_{j,1:n}\x^{(i_1)}}{\|\x^{(i_1)}\|_2},1\leq j\leq m.
\end{eqnarray}
Also, one trivially has for any fixed $i_1$ that the elements of $\u^{(i_1,1)}$, $\u^{(2)}$, and $\u^{(i_1,3)}$ are i.i.d. standard normals. (\ref{eq:genanal3}) can then be rewritten as
\begin{multline}\label{eq:genanal6}
\psi(\calX,\calY,\beta,s,t)  =  \mE_{G,u^{(4)},\u^{(2)},\h} \frac{1}{\beta|s|\sqrt{n}} \\
\times \log\lp \sum_{i_1=1}^{l}\lp\sum_{i_2=1}^{l}e^{\beta_{i_1} \lp \sqrt{t}(\y^{(i_2)})^T
 \u^{(i_1,1)}+\sqrt{1-t} (\y^{(i_2)})^T\u^{(2)} +\sqrt{t}\|\y^{(i_2)}\|_2,u^{(4)}+\sqrt{t}\|\y^{(i_2)}\|_2\u^{(i_1,3)}\rp} \rp^{s}\rp,
\end{multline}
where $\beta_{i_1}=\beta\|\x^{(i_1)}\|_2$. To facilitate the exposition we also set
\begin{eqnarray}\label{eq:genanal7}
B^{(i_1,i_2)} & \triangleq &  \sqrt{t}(\y^{(i_2)})^T\u^{(i_1,1)}+\sqrt{1-t} (\y^{(i_2)})^T\u^{(2)} \nonumber \\
A^{(i_1,i_2)} & \triangleq &  e^{\beta_{i_1}(B^{(i_1,i_2)}+\sqrt{t}\|\y^{(i_2)}\|_2 u^{(4)}+\sqrt{1-t}\|\y^{(i_2)}\|_2\u^{(i_1,3)})}\nonumber \\
C^{(i_1)} & \triangleq &  \sum_{i_2=1}^{l}A^{(i_1,i_2)}\nonumber \\
Z & \triangleq & \sum_{i_1=1}^{l}\lp\sum_{i_2=1}^{l}e^{\beta_{i_1} \lp \sqrt{t}(\y^{(i_2)})^T
 \u^{(i_1,1)}+\sqrt{1-t} (\y^{(i_2)})^T\u^{(2)} +\sqrt{t}\|\y^{(i_2)}\|_2 u^{(4)}+\sqrt{t}\|\y^{(i_2)}\|_2\u^{(i_1,3)}\rp} \rp^{s}\nonumber \\
 & = & \sum_{i_1=1}^{l} \lp \sum_{i_2=1}^{l} A^{(i_1,i_2)}\rp^s =\sum_{i_1=1}^{l}  (C^{(i_1)})^s.
\end{eqnarray}
It is now relatively easy to see that (\ref{eq:genanal6}) and (\ref{eq:genanal7}) give
\begin{eqnarray}\label{eq:genanal8}
\psi(\calX,\calY,\beta,s,t) & = &  \mE_{\u^{(i_1,1)},\u^{(2)},\u^{(i_1,3)},u^{(4)}} \frac{1}{\beta|s|\sqrt{n}} \log(Z).
\end{eqnarray}
Our main topic of studying below will be the properties of $\psi(\calX,\calY,\beta,s,t)$. In particular we will study its monotonicity and show that $\psi(\calX,\calY,\beta,s,t)$ is a non-increasing (basically decreasing) function of $t$. We start with the analysis of its derivative
\begin{eqnarray}\label{eq:genanal9}
\frac{d\psi(\calX,\calY,\beta,s,t)}{dt} & = &  \mE_{\u^{(i_1,1)},\u^{(2)},\u^{(i_1,3)},u^{(4)}} \frac{1}{\beta|s|\sqrt{n}} \log Z\nonumber \\
& = &  \mE_{\u^{(i_1,1)},\u^{(2)},\u^{(i_1,3)},u^{(4)}} \frac{1}{Z\beta|s|\sqrt{n}} \frac{d\lp \sum_{i_1=1}^{l} \lp \sum_{i_2=1}^{l} A^{(i_1,i_2)}\rp^s \rp }{dt}\nonumber \\
& = &  \mE_{\u^{(i_1,1)},\u^{(2)},\u^{(i_1,3)},u^{(4)}} \frac{s}{Z\beta|s|\sqrt{n}}  \sum_{i=1}^{l} (C^{(i_1)})^{s-1} \nonumber \\
& & \times \sum_{i_2=1}^{l}\beta_{i_1}A^{(i_1,i_2)}\lp \frac{dB^{(i_1,i_2)}}{dt}+\frac{\|\y^{(i_2)}\|_2 u^{(4)}}{2\sqrt{t}}-\frac{\|\y^{(i_2)}\|_2 \u^{(i_1,3)}}{2\sqrt{1-t}}\rp.
\end{eqnarray}
Utilizing (\ref{eq:genanal7}) we find
\begin{equation}\label{eq:genanal10}
\frac{dB^{(i_1,i_2)}}{dt} =   \frac{d\lp\sqrt{t}(\y^{(i_2)})^T\u^{(i_1,1)}+\sqrt{1-t} (\y^{(i_2)})^T\u^{(2)}\rp}{dt}=
\sum_{j=1}^{m}\lp \frac{\y_j^{(i_2)}\u_j^{(i_1,1)}}{2\sqrt{t}}-\frac{\y_j^{(i_2)}\u_j^{(2)}}{2\sqrt{1-t}}\rp.
\end{equation}
Combining (\ref{eq:genanal9}) and (\ref{eq:genanal10}) we obtain
\begin{eqnarray}\label{eq:genanal11}
\frac{d\psi(\calX,\calY,\beta,s,t)}{dt} & = &  \mE_{\u^{(i_1,1)},\u^{(2)},\u^{(i_1,3)},u^{(4)}} \frac{1}{\beta|s|\sqrt{n}} \log Z\nonumber \\
& = &  \mE_{\u^{(i_1,1)},\u^{(2)},\u^{(i_1,3)},u^{(4)}} \frac{1}{Z\beta|s|\sqrt{n}} \frac{d\lp \sum_{i_1=1}^{l} \lp \sum_{i_2=1}^{l} A^{(i_1,i_2)}\rp^s \rp }{dt}\nonumber \\
& = &  \mE_{\u^{(i_1,1)},\u^{(2)},\u^{(i_1,3)},u^{(4)}} \frac{s}{Z\beta|s|\sqrt{n}}  \sum_{i_1=1}^{l} (C^{(i_1)})^{s-1} \nonumber \\
& & \times \sum_{i_2=1}^{l}\beta_{i_1}A^{(i_1,i_2)}\lp \sum_{j=1}^{m}\lp \frac{\y_j^{(i_2)}\u_j^{(i_1,1)}}{2\sqrt{t}}-\frac{\y_j^{(i_2)}\u_j^{(2)}}{2\sqrt{1-t}}\rp+\frac{\|\y^{(i_2)}\|_2 u^{(4)}}{2\sqrt{t}}-\frac{\|\y^{(i_2)}\|_2 \u^{(i_1,3)}}{2\sqrt{1-t}}\rp.\nonumber \\
\end{eqnarray}
Each of the terms in the above sum we will handle separately. To do so and to facilitate the presentation as much as possible we will try to parallel what was done in \cite{Stojnicgscomp16}. The calculations though will be substantially different.

%%%%%%%%%%%%%%%%%%%%%%%%%%%%%%%%%%%%%%%%%%%%%%%%%%%%%%%%%%%%%%%%%%%%%%%%
\subsection{Computing $\frac{d\psi(\calX,\calY,\beta,s,t)}{dt}$}
\label{sec:compderivative}
%%%%%%%%%%%%%%%%%%%%%%%%%%%%%%%%%%%%%%%%%%%%%%%%%%%%%%%%%%%%%%%%%%%%%%%%

As mentioned above, we will separately handle all the terms appearing in (\ref{eq:genanal11}).

%%%%%%%%%%%%%%%%%%%%%%%%%%%%%%%%%%%%%%%%%%%%%%%%%%%%%%%%%%%%%%%%%%%%%%%%
\subsubsection{Finding $\mE_{\u^{(i_1,1)},\u^{(2)},\u^{(i_1,3)},u^{(4)}}  \frac{(C^{(i_1)})^{s-1} A^{(i_1,i_2)}\u_j^{(i_1,1)}\y_j^{(i_2)}}{Z}$}
\label{sec:hand1}
%%%%%%%%%%%%%%%%%%%%%%%%%%%%%%%%%%%%%%%%%%%%%%%%%%%%%%%%%%%%%%%%%%%%%%%%

We start with the following standard utilization of the Gaussian integration by parts.
\begin{multline}\label{eq:genanal12}
\mE_{\u^{(i_1,1)},\u^{(2)},\u^{(i_1,3)},u^{(4)}}  \frac{(C^{(i_1)})^{s-1} A^{(i_1,i_2)}\u_j^{(i_1,1)}\y_j^{(i_2)}}{Z} =
\mE (\sum_{p_1=1,p_1\neq i_1}^{l} \mE (\u_j^{(i_1,1)}\u_j^{(p_1,1)})\frac{d}{d\u_j^{(p_1,1)}}\lp \frac{(C^{(i_1)})^{s-1} A^{(i_1,i_2)}\y_j^{(i_2)}}{Z}\rp  \\
+\mE (\u_j^{(i_1,1)}\u_j^{(i_1,1)})\frac{d}{d\u_j^{(i_1,1)}}\lp \frac{(C^{(i_1)})^{s-1} A^{(i_1,i_2)}\y_j^{(i_2)}}{Z}\rp).
\end{multline}
Clearly, $\mE (\u_j^{(i_1,1)}\u_j^{(p_1,1)})=\frac{(\x^{(i_1)})^T\x^{(p_1)}}{\|\x^{(i_1)}\|_2\|\x^{(p_1)}\|_2}$ and we also have
\begin{multline}\label{eq:genanal13}
\mE_{\u^{(i_1,1)},\u^{(2)},\u^{(i_1,3)},u^{(4)}}  \frac{(C^{(i_1)})^{s-1} A^{(i_1,i_2)}\u_j^{(i_1,1)}\y_j^{(i_2)}}{Z} =
\mE (\sum_{p_1=1,p_1\neq i_1}^{l} \frac{(\x^{(i_1)})^T\x^{(p_1)}}{\|\x^{(i_1)}\|_2\|\x^{(p_1)}\|_2}\frac{d}{d\u_j^{(p_1,1)}}\lp \frac{(C^{(i_1)})^{s-1} A^{(i_1,i_2)}\y_j^{(i_2)}}{Z}\rp  \\
+\frac{(\x^{(i_1)})^T\x^{(i_1)}}{\|\x^{(i_1)}\|_2\|\x^{(i_1)}\|_2}\frac{d}{d\u_j^{(i_1,1)}}\lp \frac{(C^{(i_1)})^{s-1} A^{(i_1,i_2)}\y_j^{(i_2)}}{Z}\rp ).
\end{multline}
For $p_1\neq i_1$ we obtain the following
\begin{equation}\label{eq:genanal14}
\frac{d}{d\u_j^{(p_1,1)}}\lp \frac{(C^{(i_1)})^{s-1} A^{(i_1,i_2)}\y_j^{(i_2)}}{Z}\rp=(C^{(i_1)})^{s-1} A^{(i_1,i_2)}\y_j^{(i_2)}\frac{d}{d\u_j^{(p_1,1)}}\lp \frac{1}{Z}\rp=-\frac{(C^{(i_1)})^{s-1} A^{(i_1,i_2)}\y_j^{(i_2)}}{Z^2}\frac{dZ}{d\u_j^{(p_1,1)}}.
\end{equation}
Now we also have
\begin{multline}\label{eq:genanal14a}
\frac{dZ}{d\u_j^{(p_1,1)}}=\frac{d\sum_{i_1=1}^{l}  (C^{(i_1)})^s}{d\u_j^{(p_1,1)}}
=s\sum_{i_1=1}^{l}  (C^{(i_1)})^{s-1}\frac{d(C^{(i_1)})}{d\u_j^{(p_1,1)}}\\
=s\sum_{i_1=1}^{l}  (C^{(i_1)})^{s-1}\sum_{i_2=1}^{l}\frac{d(A^{(i_1,i_2)})}{d\u_j^{(p_1,1)}}
=s  (C^{(p_1)})^{s-1}\sum_{i_2=1}^{l}\frac{d(A^{(p_1,i_2)})}{d\u_j^{(p_1,1)}}.
\end{multline}
Moreover, from (\ref{eq:genanal7}) we have
\begin{equation}\label{eq:genanal15}
\frac{d B^{(p_1,i_2)}}{d\u_j^{(p_1,1)}} =  \y^{(i_2)}\sqrt{t},
\end{equation}
and then
\begin{equation}\label{eq:genanal16}
\frac{d(A^{(p_1,i_2)})}{d\u_j^{(p_1,1)}}=\beta_{p_1}A^{(p_1,i_2)}\frac{d(B^{(p_1,i_2)})}{d\u_j^{(p_1,1)}}
=\beta_{p_1}A^{(p_1,i_2)}\y_j^{(i_2)}\sqrt{t}.
\end{equation}
Combining (\ref{eq:genanal14}), (\ref{eq:genanal14a}), and (\ref{eq:genanal16}) we obtain
\begin{eqnarray}\label{eq:genanal17}
\frac{d}{d\u_j^{(p_1,1)}}\lp \frac{(C^{(i_1)})^{s-1} A^{(i_1,i_2)}\y_j^{(i_2)}}{Z}\rp & = & -\frac{(C^{(i_1)})^{s-1} A^{(i_1,i_2)}\y_j^{(i_2)}}{Z^2}
s  (C^{(p_1)})^{s-1}\sum_{p_2=1}^{l}\frac{d(A^{(p_1,i_2)})}{d\u_j^{(p_1,1)}}\nonumber \\
 & = & -\frac{(C^{(i_1)})^{s-1} A^{(i_1,i_2)}\y_j^{(i_2)}}{Z^2}
s  (C^{(p_1)})^{s-1}\sum_{p_2=1}^{l}\beta_{p_1}A^{(p_1,p_2)}\y_j^{(p_2)}\sqrt{t}.
\end{eqnarray}
For $p=i$ we have
\begin{eqnarray}\label{eq:genanal18}
\frac{d}{d\u_j^{(i_1,1)}}\lp \frac{(C^{(i_1)})^{s-1} A^{(i_1,i_2)}\y_j^{(i_2)}}{Z}\rp  & = &
\frac{\y_j^{(i_2)}}{Z}\frac{d}{d\u_j^{(i_1,1)}}\lp (C^{(i_1)})^{s-1} A^{(i_i,i_2)}\rp-\frac{ (C^{(i_1)})^{s-1} A^{(i_1,i_2)}\y_j^{(i_2)}}{Z^2}
\frac{dZ}{d\u_j^{(i_1,1)}}.\nonumber \\
\end{eqnarray}
From (\ref{eq:genanal14a}) and (\ref{eq:genanal17}) we have
\begin{multline}\label{eq:genanal18a}
\frac{dZ}{d\u_j^{(i_1,1)}}=\frac{d\sum_{i_1=1}^{l}  (C^{(i_1)})^s}{d\u_j^{(p_1,1)}}
=s  (C^{(i_1)})^{s-1}\sum_{p_2=1}^{l}\frac{d(A^{(i_1,p_2)})}{d\u_j^{(i_1,1)}}=s  (C^{(i_1)})^{s-1}\sum_{p_2=1}^{l}
\beta_{i_1}A^{(i_1,p_2)}\y_j^{(p_2)}\sqrt{t}.
\end{multline}
Also,
\begin{eqnarray}\label{eq:genanal18b}
\frac{d}{d\u_j^{(i_1,1)}}\lp (C^{(i_1)})^{s-1} A^{(i_1,i_2)}\rp
& = & (C^{(i_1)})^{s-1} \frac{dA^{(i_1,i_2)} }{d\u_j^{(i_1,1)}}+ A^{(i_i,i_2)}\frac{d(C^{(i_1)})^{s-1}}{d\u_j^{(i_1,1)}} \nonumber \\
& = & (C^{(i_1)})^{s-1}\beta_{i_1}A^{(i_1,i_2)}\y_j^{(i_2)}\sqrt{t}+(s-1)(C^{(i_1)})^{s-2}\beta_{i_1}\sum_{p_2=1}^{l}A^{(i_1,p_2)}\y_j^{(p_2)}\sqrt{t}.\nonumber \\
\end{eqnarray}
A combination of (\ref{eq:genanal13}), (\ref{eq:genanal17}), (\ref{eq:genanal18}), (\ref{eq:genanal18a}), and (\ref{eq:genanal18b}) gives
\begin{multline}\label{eq:genanal19}
\mE_{\u^{(i_1,1)},\u^{(2)},\u^{(i_1,3)},u^{(4)}}  \frac{(C^{(i_1)})^{s-1} A^{(i_1,i_2)}\u_j^{(i_1,1)}\y_j^{(i_2)}}{Z}  \\
 =
\mE \lp \frac{\y_j^{(i_2)}}{Z}\lp(C^{(i_1)})^{s-1}\beta_{i_1}A^{(i_1,i_2)}\y_j^{(i_2)}\sqrt{t}+(s-1)(C^{(i_1)})^{s-2}\beta_{i_1}\sum_{p_2=1}^{l}A^{(i_1,p_2)}\y_j^{(p_2)}\sqrt{t}\rp \rp \\
-
\mE \lp\sum_{p_1=1}^{l} \frac{(\x^{(i_1)})^T\x^{(p_1)}}{\|\x^{(i_1)}\|_2\|\x^{(p_1)}\|_2}
\frac{(C^{(i_1)})^{s-1} A^{(i_1,i_2)}\y_j^{(i_2)}}{Z^2}
s  (C^{(p_1)})^{s-1}\sum_{p_2=1}^{l}\beta_{p_1}A^{(p_1,p_2)}\y_j^{(p_2)}\sqrt{t}\rp.
\end{multline}

%%%%%%%%%%%%%%%%%%%%%%%%%%%%%%%%%%%%%%%%%%%%%%%%%%%%%%%%%%%%%%%%%%%%%%%%
\subsubsection{Finding $\mE_{\u^{(i_1,1)},\u^{(2)},\u^{(i_1,3)},u^{(4)}}  \frac{(C^{(i_1)})^{s-1} A^{(i_1,i_2)}\u_j^{(2)}\y_j^{(i_2)}}{Z}$}
\label{sec:hand2}
%%%%%%%%%%%%%%%%%%%%%%%%%%%%%%%%%%%%%%%%%%%%%%%%%%%%%%%%%%%%%%%%%%%%%%%%

We start with the following standard utilization of the Gaussian integration by parts.
\begin{multline}\label{eq:genAanal12}
\mE_{\u^{(i_1,1)},\u^{(2)},\u^{(i_1,3)},u^{(4)}}  \frac{(C^{(i_1)})^{s-1} A^{(i_1,i_2)}\u_j^{(2)}\y_j^{(i_2)}}{Z} =
\mE(\mE (\u_j^{(2)}\u_j^{(2)})\frac{d}{d\u_j^{(2)}}\lp \frac{(C^{(i_1)})^{s-1} A^{(i_1,i_2)}\y_j^{(i_2)}}{Z}\rp).
\end{multline}
Obviously $\mE (\u_j^{(2)}\u_j^{(2)})=1$ and we also have
\begin{eqnarray}\label{eq:genAanal18}
\frac{d}{d\u_j^{(2)}}\lp \frac{(C^{(i_1)})^{s-1} A^{(i_1,i_2)}\y_j^{(i_2)}}{Z}\rp  & = &
\frac{\y_j^{(i_2)}}{Z}\frac{d}{d\u_j^{(2)}}\lp (C^{(i_1)})^{s-1} A^{(i_1,i_2)}\rp-\frac{ (C^{(i_1)})^{s-1} A^{(i_1,i_2)}\y_j^{(i_2)}}{Z^2}
\frac{dZ}{d\u_j^{(2)}}.\nonumber \\
\end{eqnarray}
Moreover, we find
\begin{multline}\label{eq:genAanal18a}
\frac{dZ}{d\u_j^{(2)}}=\frac{d\sum_{i_1=1}^{l}  (C^{(i_1)})^s}{d\u_j^{(2)}}
=s  (C^{(i_1)})^{s-1}\sum_{p_2=1}^{l}\frac{d(A^{(i_1,p_2)})}{d\u_j^{(2)}}=s  (C^{(i_1)})^{s-1}\sum_{p_2=1}^{l}
\beta_{i_1}A^{(i_1,p_2)}\y_j^{(p_2)}\sqrt{1-t}.
\end{multline}
It is not that hard to obtain the following as well
\begin{multline}\label{eq:genAanal18b}
\frac{d}{d\u_j^{(2)}}\lp (C^{(i_1)})^{s-1} A^{(i_1,i_2)}\rp
 =  (C^{(i_1)})^{s-1} \frac{dA^{(i_1,i_2)} }{d\u_j^{(2)}}+ A^{(i_i,i_2)}\frac{d(C^{(i_1)})^{s-1}}{d\u_j^{(2)}} \\
 =  (C^{(i_1)})^{s-1}\beta_{i_1}A^{(i_1,i_2)}\y_j^{(i_2)}\sqrt{1-t}+(s-1)(C^{(i_1)})^{s-2}\beta_{i_1}\sum_{p_2=1}^{l}A^{(i_1,p_2)}\y_j^{(p_2)}\sqrt{1-t}.
\end{multline}
Combining (\ref{eq:genAanal12}), (\ref{eq:genAanal18}), (\ref{eq:genAanal18a}), and (\ref{eq:genAanal18b}) we have
\begin{multline}\label{eq:genAanal19}
\mE_{\u^{(i_1,1)},\u^{(2)},\u^{(i_1,3)},u^{(4)}}  \frac{(C^{(i_1)})^{s-1} A^{(i_1,i_2)}\u_j^{(2)}\y_j^{(i_2)}}{Z}  \\
 =
\mE \lp\frac{\y_j^{(i_2)}}{Z}\lp(C^{(i_1)})^{s-1}\beta_{i_1}A^{(i_1,i_2)}\y_j^{(i_2)}\sqrt{1-t}+(s-1)(C^{(i_1)})^{s-2}\beta_{i_1}\sum_{p_2=1}^{l}A^{(i_1,p_2)}\y_j^{(p_2)}\sqrt{1-t}\rp \rp \\
-
\mE \lp\sum_{p_1=1}^{l}
\frac{(C^{(i_1)})^{s-1} A^{(i_1,i_2)}\y_j^{(i_2)}}{Z^2}
s  (C^{(p_1)})^{s-1}\sum_{p_2=1}^{l}\beta_{p_1}A^{(p_1,p_2)}\y_j^{(p_2)}\sqrt{1-t}\rp.
\end{multline}

%%%%%%%%%%%%%%%%%%%%%%%%%%%%%%%%%%%%%%%%%%%%%%%%%%%%%%%%%%%%%%%%%%%%%%%%
\subsubsection{Finding $\mE_{\u^{(i_1,1)},\u^{(2)},\u^{(i_1,3)},u^{(4)}}  \frac{(C^{(i_1)})^{s-1} A^{(i_1,i_2)}\u^{(i_1,3)}}{Z}$}
\label{sec:hand3}
%%%%%%%%%%%%%%%%%%%%%%%%%%%%%%%%%%%%%%%%%%%%%%%%%%%%%%%%%%%%%%%%%%%%%%%%

We closely follow what we presented above and start with the following utilization of the Gaussian integration by parts
\begin{multline}\label{eq:genBanal12}
\mE_{\u^{(i_1,1)},\u^{(2)},\u^{(i_1,3)},u^{(4)}}  \frac{(C^{(i_1)})^{s-1} A^{(i_1,i_2)}\u^{(i_1,3)}}{Z}
=\mE(\sum_{p_1=1,p_1\neq i_1}^{l}\mE (\u^{(i_1,3)}\u^{(p_1,3)})\frac{d}{d\u^{(p_1,3)}}\lp \frac{(C^{(i_1)})^{s-1} A^{(i_1,i_2)}}{Z}\rp)\\
+\mE(\mE (\u^{(i_1,3)}\u^{(i_1,3)})\frac{d}{d\u^{(i_1,3)}}\lp \frac{(C^{(i_1)})^{s-1} A^{(i_1,i_2)}}{Z}\rp).
\end{multline}
Clearly, $\mE (\u^{(i_1,3)}\u^{(p_1,3)})=\frac{(\x^{(i_1)})^T\x^{(p_1)}}{\|\x^{(i_1)}\|_2\|\x^{(p_1)}\|_2}$ and for $p_1\neq i_1$ we obtain the following
\begin{equation}\label{eq:genBanal14}
\frac{d}{d\u^{(p_1,3)}}\lp \frac{(C^{(i_1)})^{s-1} A^{(i_1,i_2)}}{Z}\rp=(C^{(i_1)})^{s-1} A^{(i_1,i_2)}\frac{d}{d\u^{(p_1,3)}}\lp \frac{1}{Z}\rp=-\frac{(C^{(i_1)})^{s-1} A^{(i_1,i_2)}}{Z^2}\frac{dZ}{d\u^{(p_1,3)}}.
\end{equation}
Following (\ref{eq:genBanal14a}) we also have
\begin{equation}\label{eq:genBanal14a}
\frac{dZ}{d\u^{(p_1,3)}}=\frac{d\sum_{i_1=1}^{l}  (C^{(i_1)})^s}{d\u^{(p_1,3)}}
=s  (C^{(p_1)})^{s-1}\sum_{p_2=1}^{l}\frac{d(A^{(p_1,p_2)})}{d\u^{(p_1,3)}}.
\end{equation}
From (\ref{eq:genanal7}) we find
\begin{equation}\label{eq:genBanal16}
\frac{d(A^{(p_1,p_2)})}{d\u^{(p_1,3)}}
=\beta_{p_1}A^{(p_1,p_2)}\|\y^{(p_2)}\|_2\sqrt{1-t}.
\end{equation}
Combining (\ref{eq:genBanal14}), (\ref{eq:genBanal14a}), and (\ref{eq:genBanal16}) we obtain
\begin{eqnarray}\label{eq:genBanal17}
\frac{d}{d\u^{(p_1,3)}}\lp \frac{(C^{(i_1)})^{s-1} A^{(i_1,i_2)}}{Z}\rp & = & -\frac{(C^{(i_1)})^{s-1} A^{(i_1,i_2)}}{Z^2}
s  (C^{(p_1)})^{s-1}\sum_{p_2=1}^{l}\frac{d(A^{(p_1,p_2)})}{d\u^{(p_1,3)}}\nonumber \\
 & = & -\frac{(C^{(i_1)})^{s-1} A^{(i_1,i_2)}}{Z^2}
s  (C^{(p_1)})^{s-1}\sum_{p_2=1}^{l}\beta_{p_1}A^{(p_1,p_2)}\|\y^{(p_2)}\|_2\sqrt{1-t}.
\end{eqnarray}
Also, we easily have $\mE (\u^{(i_1,3)}\u^{(i_1,3)})=1$ and
\begin{eqnarray}\label{eq:genBanal18}
\frac{d}{d\u^{(i_1,3)}}\lp \frac{(C^{(i_1)})^{s-1} A^{(i_1,i_2)}}{Z}\rp  & = &
\frac{1}{Z}\frac{d}{d\u^{(i_1,3)}}\lp (C^{(i_1)})^{s-1} A^{(i_1,i_2)}\rp-\frac{ (C^{(i_1)})^{s-1} A^{(i_1,i_2)}}{Z^2}
\frac{dZ}{d\u^{(i_1,3)}}.\nonumber \\
\end{eqnarray}
Moreover,
\begin{multline}\label{eq:genBanal18a}
\frac{dZ}{d\u^{(i_1,3)}}=\frac{d\sum_{p_1=1}^{l}  (C^{(p_1)})^s}{d\u^{(i_1,3)}}
=s  (C^{(i_1)})^{s-1}\sum_{p_2=1}^{l}\frac{d(A^{(i_1,p_2)})}{d\u^{(i_1,3)}}=s  (C^{(i_1)})^{s-1}\sum_{p_2=1}^{l}
\beta_{i_1}A^{(i_1,p_2)}\|\y^{(p_2)}\|_2\sqrt{1-t}.
\end{multline}
Similarly to what was done in (\ref{eq:genanal18b}) we find
\begin{eqnarray}\label{eq:genBanal18b}
\frac{d}{d\u^{(i_1,3)}}\lp (C^{(i_1)})^{s-1} A^{(i_1,i_2)}\rp
& = & (C^{(i_1)})^{s-1} \frac{dA^{(i_1,i_2)} }{d\u^{(i_1,3)}}+ A^{(i_i,i_2)}\frac{d(C^{(i_1)})^{s-1}}{d\u^{(i_1,3)}} \nonumber \\
& = & (C^{(i_1)})^{s-1}\beta_{i_1}A^{(i_1,i_2)}\|\y^{(i_2)}\|_2\sqrt{1-t}\nonumber \\
& & +(s-1)(C^{(i_1)})^{s-2}\beta_{i_1}\sum_{p_2=1}^{l}A^{(i_1,p_2)}\|\y^{(p_2)}\|_2\sqrt{1-t}.\nonumber \\
\end{eqnarray}
Combining (\ref{eq:genBanal12}), (\ref{eq:genBanal18}), (\ref{eq:genBanal18a}), and (\ref{eq:genBanal18b}) we find
\begin{multline}\label{eq:genBanal19}
\mE_{\u^{(i_1,1)},\u^{(2)},\u^{(i_1,3)},u^{(4)}}  \frac{(C^{(i_1)})^{s-1} A^{(i_1,i_2)}\u^{(i_1,3)}}{Z}  \\
 =
\mE \lp\frac{1}{Z}\lp(C^{(i_1)})^{s-1}\beta_{i_1}A^{(i_1,i_2)}\|\y^{(i_2)}\|_2\sqrt{1-t}+(s-1)(C^{(i_1)})^{s-2}\beta_{i_1}\sum_{p_2=1}^{l}A^{(i_1,p_2)}\|\y^{(p_2)}\|_2\sqrt{1-t}\rp \rp \\
-
\mE \lp\sum_{p_1=1}^{l}\frac{(\x^{(i_1)})^T\x^{(p_1)}}{\|\x^{(i_1)}\|_2\|\x^{(p_1)}\|_2}
\frac{(C^{(i_1)})^{s-1} A^{(i_1,i_2)}}{Z^2}
s  (C^{(p_1)})^{s-1}\sum_{p_2=1}^{l}\beta_{p_1}A^{(p_1,p_2)}\|\y^{(p_2)}\|_2\sqrt{1-t}\rp.
\end{multline}

%%%%%%%%%%%%%%%%%%%%%%%%%%%%%%%%%%%%%%%%%%%%%%%%%%%%%%%%%%%%%%%%%%%%%%%%
\subsubsection{Finding $\mE_{\u^{(i_1,1)},\u^{(2)},\u^{(i_1,3)},u^{(4)}}  \frac{(C^{(i_1)})^{s-1} A^{(i_1,i_2)}u^{(4)}}{Z}$}
\label{sec:hand4}
%%%%%%%%%%%%%%%%%%%%%%%%%%%%%%%%%%%%%%%%%%%%%%%%%%%%%%%%%%%%%%%%%%%%%%%%

We again closely follow what we presented above and start with the following utilization of the Gaussian integration by parts
\begin{equation}\label{eq:genCanal12}
\mE_{\u^{(i_1,1)},\u^{(2)},\u^{(i_1,3)},u^{(4)}}  \frac{(C^{(i_1)})^{s-1} A^{(i_1,i_2)}u^{(4)}}{Z}
=\mE(\mE (u^{(4)}u^{(4)})\frac{d}{du^{(4)}}\lp \frac{(C^{(i_1)})^{s-1} A^{(i_1,i_2)}}{Z}\rp).
\end{equation}
Clearly, $\mE (u^{(4)}u^{(4)})=1$. Further, we have
\begin{eqnarray}\label{eq:genCanal18}
\frac{d}{du^{(4)}}\lp \frac{(C^{(i_1)})^{s-1} A^{(i_1,i_2)}}{Z}\rp  & = &
\frac{1}{Z}\frac{d}{d u^{(4)}}\lp (C^{(i_1)})^{s-1} A^{(i_1,i_2)}\rp-\frac{ (C^{(i_1)})^{s-1} A^{(i_1,i_2)}}{Z^2}
\frac{dZ}{d u^{(4)}}.\nonumber \\
\end{eqnarray}
Similarly to (\ref{eq:genBanal18a}) we find
\begin{equation}\label{eq:genCanal18a}
\frac{dZ}{du^{(4)}}=\frac{d\sum_{p_1=1}^{l}  (C^{(p_1)})^s}{du^{(4)}}
=s \sum_{p_1=1}^{l} (C^{(p_1)})^{s-1}\sum_{p_2=1}^{l}\frac{d(A^{(p_1,p_2)})}{du^{(4)}}=s \sum_{p_1=1}^{l} (C^{(p_1)})^{s-1}\sum_{p_2=1}^{l}
\beta_{p_1}A^{(p_1,p_2)}\|\y^{(p_2)}\|_2\sqrt{t}.
\end{equation}
Following closely (\ref{eq:genBanal18b}) (and earlier (\ref{eq:genanal18b})) we also find
\begin{multline}\label{eq:genCanal18b}
\frac{d}{du^{(4)}}\lp (C^{(i_1)})^{s-1} A^{(i_1,i_2)}\rp
 =  (C^{(i_1)})^{s-1} \frac{dA^{(i_1,i_2)} }{du^{(4)}}+ A^{(i_i,i_2)}\frac{d(C^{(i_1)})^{s-1}}{d u^{(4)}} \\
 =  (C^{(i_1)})^{s-1}\beta_{i_1}A^{(i_1,i_2)}\|\y^{(i_2)}\|_2\sqrt{t}
 +(s-1)(C^{(i_1)})^{s-2}\beta_{i_1}\sum_{p_2=1}^{l}A^{(i_1,p_2)}\|\y^{(p_2)}\|_2\sqrt{t}.
\end{multline}
A combination of (\ref{eq:genCanal12}), (\ref{eq:genCanal18}), (\ref{eq:genCanal18a}), and (\ref{eq:genCanal18b}) gives
\begin{multline}\label{eq:genCanal19}
\mE_{\u^{(i_1,1)},\u^{(2)},\u^{(i_1,3)},u^{(4)}}  \frac{(C^{(i_1)})^{s-1} A^{(i_1,i_2)}u^{(4)}}{Z}  \\
 =
\mE \lp\frac{1}{Z}\lp(C^{(i_1)})^{s-1}\beta_{i_1}A^{(i_1,i_2)}\|\y^{(i_2)}\|_2\sqrt{t}+(s-1)(C^{(i_1)})^{s-2}\beta_{i_1}\sum_{p_2=1}^{l}A^{(i_1,p_2)}\|\y^{(p_2)}\|_2\sqrt{t}\rp \rp \\
-
\mE \lp
\frac{(C^{(i_1)})^{s-1} A^{(i_1,i_2)}}{Z^2}
s  \sum_{p_1=1}^{l} (C^{(p_1)})^{s-1}\sum_{p_2=1}^{l}\beta_{p_1}A^{(p_1,p_2)}\|\y^{(p_2)}\|_2\sqrt{t}\rp.
\end{multline}

%%%%%%%%%%%%%%%%%%%%%%%%%%%%%%%%%%%%%%%%%%%%%%%%%%%%%%%%%%%%%%%%%%%%%%%%
\subsubsection{Connecting all pieces together}
\label{sec:conalt}
%%%%%%%%%%%%%%%%%%%%%%%%%%%%%%%%%%%%%%%%%%%%%%%%%%%%%%%%%%%%%%%%%%%%%%%%

Using (\ref{eq:genanal11}), (\ref{eq:genanal19}), (\ref{eq:genAanal19}), (\ref{eq:genBanal19}), and (\ref{eq:genCanal19}) we obtain
\begin{eqnarray}\label{eq:conalt1}
\frac{\psi(\calX,\calY,\beta,s,t)}{dt}
 =  \frac{s}{2\beta|s|\sqrt{n}} \mE_{\u^{(i_1,1)},\u^{(2)},\u^{(i_1,3)},u^{(4)}} (-S_1+S_2+S_3-S_4)
\end{eqnarray}
where
\begin{eqnarray}
\label{eq:conalt1a}
S1 & = & \sum_{i_1=1}^{l} \sum_{i_2=1}^{l}\beta_{i_1}\sum_{j=1}^{m}
\lp\sum_{p_1=1}^{l} \frac{(\x^{(i_1)})^T\x^{(p_1)}}{\|\x^{(i_1)}\|_2\|\x^{(p_1)}\|_2}
\frac{(C^{(i_1)})^{s-1} A^{(i_1,i_2)}\y_j^{(i_2)}}{Z^2}
s  (C^{(p_1)})^{s-1}\sum_{p_2=1}^{l}\beta_{p_1}A^{(p_1,p_2)}\y_j^{(p_2)}\rp\nonumber \\
S_2 & = & \sum_{i_1=1}^{l} \sum_{i_2=1}^{l}\beta_{i_1}\sum_{j=1}^{m}
 \lp\sum_{p_1=1}^{l}
\frac{(C^{(i_1)})^{s-1} A^{(i_1,i_2)}\y_j^{(i_2)}}{Z^2}
s  (C^{(p_1)})^{s-1}\sum_{p_2=1}^{l}\beta_{p_1}A^{(p_1,p_2)}\y_j^{(p_2)}\rp\nonumber \\
S_3 & = & \sum_{i_1=1}^{l} \sum_{i_2=1}^{l}\beta_{i_1}
\|\y^{(i_2)}\|_2 \lp\sum_{p_1=1}^{l}\frac{(\x^{(i_1)})^T\x^{(p_1)}}{\|\x^{(i_1)}\|_2\|\x^{(p_1)}\|_2}
\frac{(C^{(i_1)})^{s-1} A^{(i_1,i_2)}}{Z^2}
s  (C^{(p_1)})^{s-1}\sum_{p_2=1}^{l}\beta_{p_1}A^{(p_1,p_2)}\|\y^{(p_2)}\|_2\rp\nonumber \\
S_4 & = & \sum_{i_1=1}^{l} \sum_{i_2=1}^{l}\beta_{i_1}
\|\y^{(i_2)}\|_2 \lp
\frac{(C^{(i_1)})^{s-1} A^{(i_1,i_2)}}{Z^2}
s \sum_{p_1=1}^{l}  (C^{(p_1)})^{s-1}\sum_{p_2=1}^{l}\beta_{p_1}A^{(p_1,p_2)}\|\y^{(p_2)}\|_2\rp.
\end{eqnarray}
From (\ref{eq:conalt1a}) we further have
\begin{multline}
\label{eq:conalt1b}
S_2-S_1
 =  s\beta^2 \sum_{i_1=1}^{l} \sum_{p_1=1}^{l}
\frac{(C^{(i_1)})^{s}(C^{(p_1)})^{s}(\|\x^{(i_1)}\|_2\|\x^{(p_1)}\|_2-(\x^{(i_1)})^T\x^{(p_1)})}{Z^2} \\
\times \lp\sum_{i_2=1}^{l}\sum_{p_2=1}^{l}
\frac{A^{(i_1,i_2)}A^{(p_1,p_2)}}{C^{(i_1)}C^{(p_1)}}
(\y^{(i_2)})^T\y^{(p_2)}\rp,
\end{multline}
and in a similar fashion
\begin{multline}
\label{eq:conalt1c}
S_4-S_3
 =  s\beta^2 \sum_{i_1=1}^{l} \sum_{p_1=1}^{l}
\frac{(C^{(i_1)})^{s}(C^{(p_1)})^{s}(\|\x^{(i_1)}\|_2\|\x^{(p_1)}\|_2-(\x^{(i_1)})^T\x^{(p_1)})}{Z^2} \\
\times \lp\sum_{i_2=1}^{l}\sum_{p_2=1}^{l} \frac{A^{(i_1,i_2)}A^{(p_1,p_2)}}{C^{(i_1)}C^{(p_1)}}
\|\y^{(i_2)})\|_2\|\y^{(p_2)}\|_2\rp.
\end{multline}
Combining (\ref{eq:conalt1a}), (\ref{eq:conalt1b}), and (\ref{eq:conalt1c}) we finally have
\begin{multline}\label{eq:conalt2}
\frac{\psi(\calX,\calY,\beta,s,t)}{dt}
  =   -\frac{s^2\beta}{2|s|\sqrt{n}} \mE_{\u^{(i_1,1)},\u^{(2)},\u^{(i_1,3)},u^{(4)}}  \sum_{i_1=1}^{l} \sum_{p_1=1}^{l}
\frac{(C^{(i_1)})^{s}(C^{(p_1)})^{s}(\|\x^{(i_1)}\|_2\|\x^{(p_1)}\|_2-(\x^{(i_1)})^T\x^{(p_1)})}{Z^2}  \\
 \times \lp\sum_{i_2=1}^{l}\sum_{p_2=1}^{l}
\frac{A^{(i_1,i_2)}A^{(p_1,p_2)}}{C^{(i_1)}C^{(p_1)}}
(\|\y^{(i_2)}\|_2\|\y^{(p_2)}\|_2-(\y^{(i_2)})^T\y^{(p_2)})\rp.
\end{multline}
 Now it easily follows that $\frac{\psi(\calX,\calY,\beta,s,t)}{dt}\leq 0$ and function $\psi(\calX,\calY,\beta,s,t)$ is indeed non-increasing (decreasing) in $t$. We summarize the obtained results in the following theorem.
\begin{theorem}
\label{thm:thm1}
Let $G\in\mR^{m \times n},u^{(4)}\in\mR^1,\u^{(2)}\in\mR^{m\times 1}$, and $\h\in\mR^{n\times 1}$ all have i.i.d. standard normal components ($G$, $u^{(4)}$, $\u^{(2)}$, and $\h$ are then independent of each other as well). Assume that set $\calX=\{\x^{(1)},\x^{(2)},\dots,\x^{(l)}\}$, where $\x^{(i)}\in \mR^{n},1\leq i\leq l$, and set $\calY=\{\y^{(1)},\y^{(2)},\dots,\y^{(l)}\}$, where $\y^{(i)}\in \mR^{m},1\leq i\leq l$ are given and that $\beta\geq 0$ and $s$ are real numbers. One then has that function $\psi(\calX,\calY,\beta,s,t)$
\begin{multline}\label{eq:thm1eq1}
\psi(\calX,\calY,\beta,s,t)=  \mE_{G,u^{(4)},\u^{(2)},\h} \frac{1}{\beta|s|\sqrt{n}} \\
 \times \log\lp \sum_{i_1=1}^{l}\lp\sum_{i_2=1}^{l}e^{\beta \lp \sqrt{t}(\y^{(i_2)})^T
 G\x^{(i_1)}+\sqrt{1-t}\|\x^{(i_2)}\|_2 (\y^{(i_2)})^T\u^{(2)}+\sqrt{t}\|\x^{(i_1)}\|_2\|\y^{(i_2)}\|_2,u^{(4)} +\sqrt{1-t}\|\y^{(i_2)}\|_2\h^T\x^{(i)}\rp} \rp^{s}\rp,
\end{multline}
is non-increasing (decreasing) in $t$.
\end{theorem}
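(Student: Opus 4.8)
The plan is to view $\psi(\calX,\calY,\beta,s,t)$ as an interpolation in $t$ between $t=1$ (which gives the quantity $\xi(\calX,\calY,\beta,s)$ of actual interest) and $t=0$ (a decoupled object), and to prove monotonicity by differentiating in $t$ and showing the derivative is nonpositive. I would work from the representation (\ref{eq:genanal6})--(\ref{eq:genanal8}) of $\psi$ in terms of $Z=\sum_{i_1}(C^{(i_1)})^s$, where the relevant Gaussian ingredients are the rescaled vectors $\u^{(i_1,1)}=G\x^{(i_1)}/\|\x^{(i_1)}\|_2$ and $\u^{(i_1,3)}=\h^T\x^{(i_1)}/\|\x^{(i_1)}\|_2$ together with the independent standard normals $\u^{(2)}$ and $u^{(4)}$. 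The structural fact to keep in view is the covariance pattern $\mE[\u_j^{(i_1,1)}\u_j^{(p_1,1)}]=\mE[\u^{(i_1,3)}\u^{(p_1,3)}]=(\x^{(i_1)})^T\x^{(p_1)}/(\|\x^{(i_1)}\|_2\|\x^{(p_1)}\|_2)$, while $\u^{(2)}$ and $u^{(4)}$ carry identity covariance; this mismatch is exactly what makes the $\x$-side Gram discrepancy appear.

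Differentiating for $t\in(0,1)$ (endpoints afterwards by continuity) yields, as in (\ref{eq:genanal9})--(\ref{eq:genanal11}), a sum of terms each of which is the expectation of a smooth functional of these Gaussians times one linear Gaussian factor: $\u_j^{(i_1,1)}\y_j^{(i_2)}$, $\u_j^{(2)}\y_j^{(i_2)}$, $\u^{(i_1,3)}$, or $u^{(4)}$. To each I would apply Gaussian integration by parts (Stein's identity), replacing the linear factor by a covariance weight times a derivative acting on the remaining functional; this is the content of \ref{sec:hand1}--\ref{sec:hand4}. The only care needed is the chain rule through the nested structure $Z=\sum_{i_1}(C^{(i_1)})^s$, $C^{(i_1)}=\sum_{i_2}A^{(i_1,i_2)}$, $A^{(i_1,i_2)}=e^{\beta_{i_1}(\cdots)}$, which generates the $(s-1)$ and $1/Z^2$ terms visible in (\ref{eq:genanal19})--(\ref{eq:genCanal19}).

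Assembling the four contributions gives (\ref{eq:conalt1})--(\ref{eq:conalt1a}): the ``diagonal'' pieces coming from the $\mE[(\cdot)^2]$ weights cancel in pairs, and the survivors $S_2-S_1$ and $S_4-S_3$ each carry the common factor $\|\x^{(i_1)}\|_2\|\x^{(p_1)}\|_2-(\x^{(i_1)})^T\x^{(p_1)}\ge 0$ (Cauchy--Schwarz), multiplied respectively by $(\y^{(i_2)})^T\y^{(p_2)}$ and $\|\y^{(i_2)}\|_2\|\y^{(p_2)}\|_2$, with the positive weights $A^{(i_1,i_2)}A^{(p_1,p_2)}/(C^{(i_1)}C^{(p_1)})$ and $(C^{(i_1)})^s(C^{(p_1)})^s/Z^2$ and a resummation $\sum_{i_2,p_2}A^{(i_1,i_2)}A^{(p_1,p_2)}=C^{(i_1)}C^{(p_1)}$. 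Taking the difference, the $\y$-side collapses to the second Cauchy--Schwarz discrepancy $\|\y^{(i_2)}\|_2\|\y^{(p_2)}\|_2-(\y^{(i_2)})^T\y^{(p_2)}\ge 0$, which is precisely (\ref{eq:conalt2}). Since $s^2\ge 0$, $\beta\ge 0$, $Z>0$, and every $A^{(\cdot,\cdot)}$ and $C^{(\cdot)}$ is a positive exponential, the bracket in (\ref{eq:conalt2}) is a nonnegative sum with an overall minus sign, so $d\psi/dt\le 0$ on $(0,1)$; continuity of $\psi$ in $t$ on $[0,1]$ upgrades this to monotonicity on the closed interval, which is the assertion of Theorem~\ref{thm:thm1}.

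The step I expect to be the main obstacle is the bookkeeping in assembling $S_1,\dots,S_4$: recognizing the pairwise cancellation of the self-derivative terms and performing the resummation that turns the raw integration-by-parts output into the manifestly signed bilinear form (\ref{eq:conalt1b})--(\ref{eq:conalt2}); keeping the powers of $\beta$, the $s$ versus $|s|$ factors, and the index relabelings consistent is where errors would most easily creep in. One should also note that the normalization uses $|s|$ in the denominator, so $s=0$ is excluded (or handled by a separate limiting argument), and that the $1/\sqrt t$ and $1/\sqrt{1-t}$ appearing in the derivative are the reason the interior/boundary split is needed.
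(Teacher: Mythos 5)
Your proposal is correct and follows essentially the same route as the paper: interpolate in $t$, differentiate, apply Gaussian integration by parts to each of the four linear Gaussian factors, observe the cancellation of the self-derivative (diagonal) terms and the resummation into the product of the two Cauchy--Schwarz discrepancies with positive weights, and conclude $d\psi/dt\le 0$ as in (\ref{eq:conalt2}). Your added remarks about the endpoints, the exclusion of $s=0$, and the bookkeeping hazards are sensible but do not change the argument.
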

\begin{proof}
  Follows from the above presentation.
\end{proof}
\begin{corollary}
  Assume the setup of Theorem \ref{thm:thm1}. Then we also have
\begin{eqnarray}\label{eq:co1eq1}
\psi(\calX,\calY,\beta,s,t)= \psi(\calX,\calY,\beta,s,0)+\int_{0}^{t}\frac{d\psi(\calX,\calY,\beta,s,t)}{dt}dt,
\end{eqnarray}
as well as the following comparison principle
\begin{eqnarray}\label{eq:co1eq2}
\psi(\calX,\calY,\beta,s,0) \geq  \psi(\calX,\calY,\beta,s,t)\geq \psi(\calX,\calY,\beta,s,1).
\end{eqnarray}
\end{corollary}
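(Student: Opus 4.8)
The plan is to prove the monotonicity asserted in Theorem~\ref{thm:thm1} by differentiating the interpolating function in $t$ and showing that the derivative is nonpositive, and then to extend from the open interval to the endpoints by continuity. The starting point is the representation $\psi(\calX,\calY,\beta,s,t)=\mE\,\frac{1}{\beta|s|\sqrt n}\log Z$ in (\ref{eq:genanal8}), where $Z$ is assembled from the jointly Gaussian families $\u^{(i_1,1)}$, $\u^{(2)}$, $\u^{(i_1,3)}$, $u^{(4)}$ whose entries (for fixed $i_1$) are standard normal. Differentiating under the expectation and applying the chain rule through $Z=\sum_{i_1}(C^{(i_1)})^s$, $C^{(i_1)}=\sum_{i_2}A^{(i_1,i_2)}$ and the exponents $B^{(i_1,i_2)}$ produces (\ref{eq:genanal11}): four groups of terms, each of the form (Gaussian coordinate) $\times$ (smooth functional of the same Gaussians), carried by an explicit weight $\tfrac1{2\sqrt t}$ or $\tfrac1{2\sqrt{1-t}}$ coming from $\tfrac{d}{dt}\sqrt t$ and $\tfrac{d}{dt}\sqrt{1-t}$.

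The core of the argument is to evaluate each of these four groups by Gaussian integration by parts (Stein's identity), exactly as in Sections~\ref{sec:hand1}--\ref{sec:hand4}. The only data needed are the covariances: $\mE[\u_j^{(i_1,1)}\u_j^{(p_1,1)}]=\mE[\u^{(i_1,3)}\u^{(p_1,3)}]=\frac{(\x^{(i_1)})^T\x^{(p_1)}}{\|\x^{(i_1)}\|_2\|\x^{(p_1)}\|_2}$, while $\u^{(2)}$ and $u^{(4)}$ have unit variance and are independent across indices. Differentiating the functional $\frac{(C^{(i_1)})^{s-1}A^{(i_1,i_2)}}{Z}$ with respect to a Gaussian coordinate yields three kinds of contributions — from $A^{(i_1,i_2)}$ itself, from the outer power $(C^{(i_1)})^{s-1}$, and from the $1/Z$ factor — and I would record all of them, arriving at the identities (\ref{eq:genanal19}), (\ref{eq:genAanal19}), (\ref{eq:genBanal19}), (\ref{eq:genCanal19}).

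The decisive and most delicate step is the reorganization of Section~\ref{sec:conalt}. In each family the explicit singular weight $\tfrac1{2\sqrt t}$ or $\tfrac1{2\sqrt{1-t}}$ is cancelled by the $\sqrt t$ or $\sqrt{1-t}$ produced when one differentiates the exponent in the integration by parts, so all inverse-square-root factors disappear. Writing the surviving pieces as $S_1,\dots,S_4$ as in (\ref{eq:conalt1})--(\ref{eq:conalt1a}), one observes that the ``self'' contributions — those in which only $(C^{(i_1)})^{s-1}$ or $A^{(i_1,i_2)}$ is differentiated — are identical for the $G$-family and the $\u^{(2)}$-family (respectively for the $u^{(4)}$-family and the $\h$-family) and enter with opposite signs, hence cancel, leaving only the cross terms built on $1/Z^2$. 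Forming $(S_2-S_1)$ and $(S_4-S_3)$ as in (\ref{eq:conalt1b})--(\ref{eq:conalt1c}) — using that $S_1,S_3$ carry the correlation coefficient $\tfrac{(\x^{(i_1)})^T\x^{(p_1)}}{\|\x^{(i_1)}\|_2\|\x^{(p_1)}\|_2}$ whereas $S_2,S_4$ do not — yields exactly (\ref{eq:conalt2}), in which $\calX$ enters only through $\|\x^{(i_1)}\|_2\|\x^{(p_1)}\|_2-(\x^{(i_1)})^T\x^{(p_1)}$ and $\calY$ only through $\|\y^{(i_2)}\|_2\|\y^{(p_2)}\|_2-(\y^{(i_2)})^T\y^{(p_2)}$. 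I expect the term-by-term verification of (\ref{eq:conalt1b})--(\ref{eq:conalt1c}) — i.e. checking that nothing except these two symmetric defects survives — to be the main obstacle; everything else is mechanical bookkeeping.

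Finally, to conclude I would note that every factor in (\ref{eq:conalt2}) has the right sign: $\frac{s^2}{|s|}=|s|\geq0$ and $\beta\geq0$; the quantities $A^{(i_1,i_2)}$, $C^{(i_1)}$, $Z$ are positive; and the Cauchy--Schwarz inequality gives $\|\x^{(i_1)}\|_2\|\x^{(p_1)}\|_2\geq(\x^{(i_1)})^T\x^{(p_1)}$ and $\|\y^{(i_2)}\|_2\|\y^{(p_2)}\|_2\geq(\y^{(i_2)})^T\y^{(p_2)}$. Hence the expectation in (\ref{eq:conalt2}) is a sum of products of nonnegative quantities, the prefactor is $\leq0$, so $\frac{d\psi}{dt}\leq0$ for $t\in(0,1)$; continuity of $\psi$ in $t$ on $[0,1]$ then pushes the inequality to the endpoints, which is Theorem~\ref{thm:thm1}. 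The Corollary follows at once: (\ref{eq:co1eq1}) is the fundamental theorem of calculus applied to the differentiable map $t\mapsto\psi(\calX,\calY,\beta,s,t)$, and (\ref{eq:co1eq2}) is the monotonicity read off on $[0,t]$ and on $[t,1]$.
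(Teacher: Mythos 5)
Your proposal is correct and follows essentially the same route as the paper: the Gaussian integration by parts in Sections~\ref{sec:hand1}--\ref{sec:hand4}, the cancellation of the self-terms and the regrouping into $(S_2-S_1)$ and $(S_4-S_3)$ leading to (\ref{eq:conalt2}), the sign argument via Cauchy--Schwarz, and then the fundamental theorem of calculus plus monotonicity for (\ref{eq:co1eq1}) and (\ref{eq:co1eq2}). The paper's stated proof of the corollary is just a one-line appeal to Theorem~\ref{thm:thm1} and the nonpositivity of the derivative, which is exactly the content you supply in your final paragraph.
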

\begin{proof}
It is automatic by the above theorem and after one notes that $\frac{\psi(\calX,\calY,\beta,s,t)}{dt}\leq 0$.
\end{proof}

%%%%%%%%%%%%%%%%%%%%%%%%%%%%%%%%%%%%%%%%%%%%%%%%%%%%%%%%%%%%%%%%%
\subsection{Numerical experiments}
\label{sec:genconsim}
%%%%%%%%%%%%%%%%%%%%%%%%%%%%%%%%%%%%%%%%%%%%%%%%%%%%%%%%%%%%%%%%%

The theoretical results that we presented above establish a very powerful tool for dealing with random processes. Below we look at them from a numerical point of view, i.e. through  numerical simulations. For the simplicity we chose $m=5$, $n=5$, $l=10$, and selected set $\calX$ as the columns of the following matrix (basically $\calX$ was selected the same way as the corresponding set in \cite{Stojnicgscomp16})
\begin{equation}
X^{+}=\begin{bmatrix}
-0.7998 & 0.1004 & -0.7599 & 0.6616 & 0.5864 & -0.4010 & -0.0148 & -0.8320 & 0.3187 & -0.4861 \\
0.1760 & 0.0704 & 0.1056 & -0.1369 & -0.6259 & -0.5289 & -0.3740 & 0.3140 & 0.6299 & -0.5494 \\
0.0806 & -0.9085 & -0.3381 & -0.1970 & -0.1438 & 0.4863 & 0.5832 & 0.0840 & -0.2299 & -0.2647 \\
0.5487 & -0.3120 & -0.5447 & 0.5673 & 0.4870 & -0.5239 & 0.0407 & -0.2955 & 0.3913 & 0.5113 \\
-0.1476 & 0.2497 & -0.0208 & 0.4276 & 0.0808 & -0.2202 & -0.7198 & 0.3389 & 0.5438 & -0.3611
\end{bmatrix}.
\end{equation}
One then obviously has
\begin{equation}\label{eq:sim1}
  \calX^{+}=\{X^{+}_{:,1},X^{+}_{:,2},\dots,X^{+}_{:,l}\}.
\end{equation}
We do recall the observation from \cite{Stojnicgscomp16} that set $\calX^{+}$ (and matrix $X^{+}$) are practically randomly chosen (an added scaling makes $\|X^{+}_{:,i_1}\|_2=1,1\leq i_1\leq l$). Also we selected set $\calY$ as the columns of the following matrix
\begin{equation}
Y^{+}=\begin{bmatrix}
-0.4639 & 0.7324 & -0.4828 & 0.0280 & -0.4016 & -0.6764 & 0.6161 & 0.4281 & -0.3831 & 0.0699 \\
0.0416 & -0.3678 & 0.0144 & -0.4856 & 0.4880 & -0.6861 & 0.1266 & 0.5132 & 0.0350 & -0.0308 \\
-0.6522 & 0.1775 & 0.2449 & -0.2417 & -0.1255 & 0.2355 & 0.0859 & -0.1498 & 0.2410 & -0.7208 \\
-0.5981 & -0.1078 & 0.4879 & -0.3456 & 0.5796 & -0.0856 & 0.6892 & 0.1325 & 0.8628 & -0.1637 \\
-0.0037 & 0.5340 & 0.6846 & 0.7652 & -0.4989 & -0.0946 & -0.3492 & -0.7165 & -0.2225 & -0.6692 
\end{bmatrix}.
\end{equation}
Clearly,
\begin{equation}\label{eq:sim1a}
  \calY^{+}=\{Y^{+}_{:,1},Y^{+}_{:,2},\dots,Y^{+}_{:,l}\}.
\end{equation}
Similarly to what was mentioned above for set $\calX^{+}$, we also add that set $\calY^{+}$ (and matrix $Y^{+}$) are again for all practical purposes randomly chosen (to make everything a bit neater we again scaled all the columns of $Y^{+}$ so that $\|Y^{+}_{:,i_1}\|_2=1,1\leq i_1\leq l$). The numerical experiments were conducted in a fashion very similar to the one from \cite{Stojnicgscomp16}.
Namely, we simulated derivatives $\frac{d\psi(\calX,\calY,\beta,s,t)}{dt}$ using both (\ref{eq:genanal11}) and (\ref{eq:conalt2}).
We refer to the use of (\ref{eq:genanal11}) as the standard interpolation and to the use of (\ref{eq:conalt2}) as the computed interpolation. We then computed $\psi(\calX,\calY,\beta,s,t)$ using (\ref{eq:co1eq1}). Moreover, we additionally simulated $\psi(\calX,\calY,\beta,s,t)$ using (\ref{eq:genanal8}) which we view as a direct way of simulation without any interpolating computations. We set $\beta=10$ and averaged all random quantities over a set of $5e4$ experiments. To parallel the presentation given in \cite{Stojnicgscomp16} as much as possible, we here also simulated two different scenarios with all other parameters being the same, except that in one of the scenarios $s=1$ and in the other $s=-1$.

\textbf{\underline{\emph{1) $s=1$ -- numerical results}}}

Figure \ref{fig:gensplus1xnorm1psi} and Table \ref{tab:gensplus1xnorm1psi} contain the results obtained for $s=1$. Following the standard that we set in \cite{Stojnicgscomp16}, Figure \ref{fig:gensplus1xnorm1psi} shows the entire range for $t$ (i.e. its shows the values for $t\in(0,1)$) whereas Table \ref{tab:gensplus1xnorm1psi} focuses on several particular values of $t$ and shows concrete values of all key quantities. As both, Figure \ref{fig:gensplus1xnorm1psi} and Table \ref{tab:gensplus1xnorm1psi}, show, there is a solid agreement between all presented results.

\begin{figure}[htb]
%\begin{minipage}[b]{.5\linewidth}
\centering
\centerline{\epsfig{figure=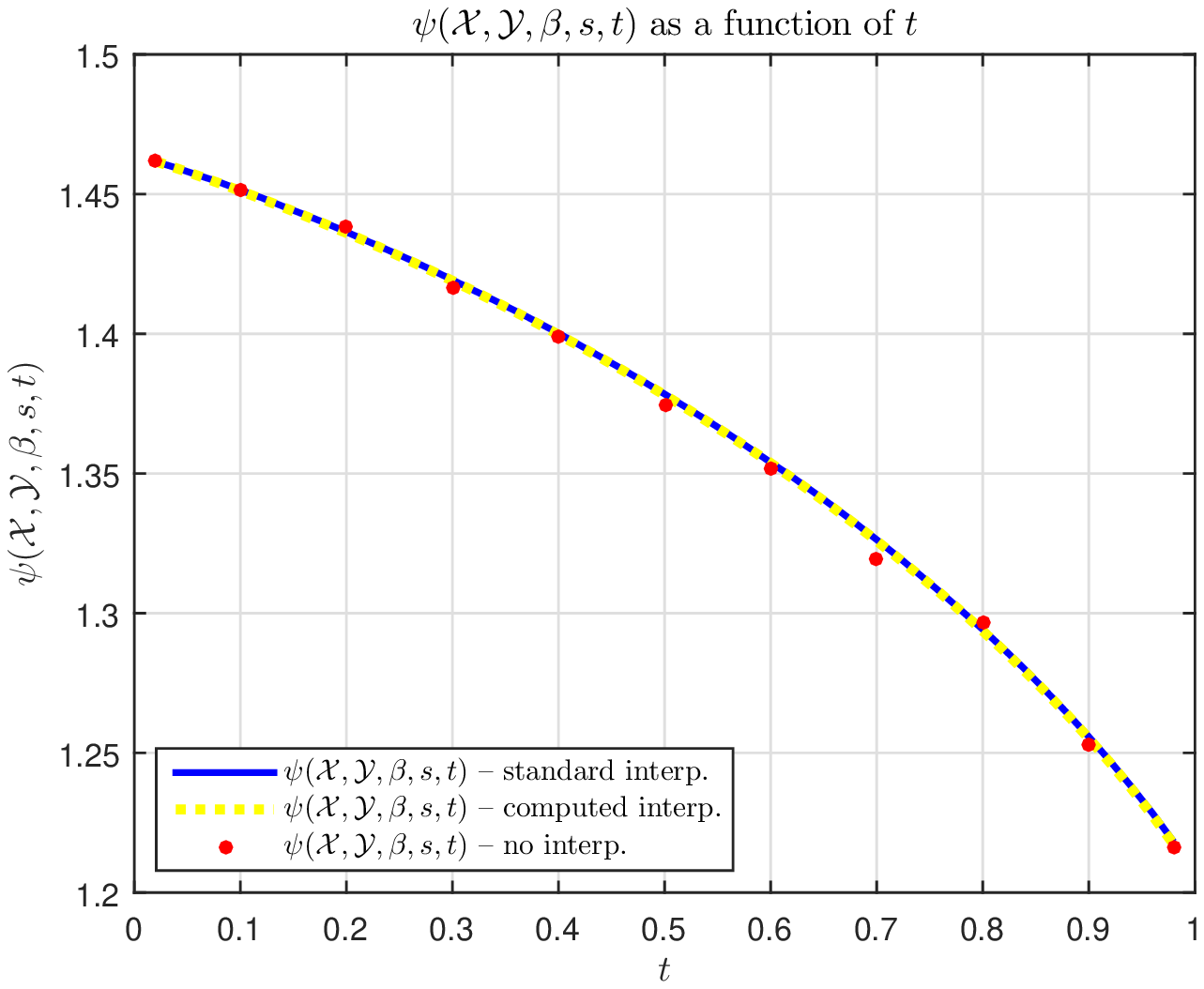,width=11.5cm,height=8cm}}
%\end{minipage}
%\begin{minipage}[b]{.5\linewidth}
%\centering
%\centerline{\epsfig{figure=finprerral08.eps,width=9cm,height=6.5cm}}
%\end{minipage}
\caption{$\psi(\calX,\calY,\beta,s,t)$ as a function of $t$; $m=5$, $n=5$, $l=10$, $\calX=\calX^{+}$, $\calY=\calY^{+}$, $\beta=3$, $s=1$}
\label{fig:gensplus1xnorm1psi}
\end{figure}
\begin{table}[h]
\caption{Simulated results --- $m=5$, $n=5$, $l=10$, $\calX=\calX^{+}$, $\calY=\calY^{+}$, $\beta=3$, $s=1$}\vspace{.1in}
\hspace{-0in}\centering
\begin{tabular}{||c||c|c|c|c|c||}\hline\hline
$ t$  &  $\frac{d\psi}{dt}$; (\ref{eq:genanal11}) & $\frac{d\psi}{dt}$;  (\ref{eq:conalt2}) & $\psi$;  (\ref{eq:genanal11}) and (\ref{eq:co1eq1}) & $\psi$; (\ref{eq:conalt2}) and (\ref{eq:co1eq1}) & $\psi$;  (\ref{eq:genanal8})\\  \hline\hline
$ 0.1000 $ & $ -0.1438 $ & $ -0.1384 $ & $\bl{\mathbf{ 1.4514 }}$ & $\bl{\mathbf{ 1.4511 }}$ & $\mathbf{ 1.4514 }$ \\ \hline
$ 0.2000 $ & $ -0.1613 $ & $ -0.1574 $ & $\bl{\mathbf{ 1.4365 }}$ & $\bl{\mathbf{ 1.4361 }}$ & $\mathbf{ 1.4379 }$ \\ \hline
$ 0.3000 $ & $ -0.1819 $ & $ -0.1794 $ & $\bl{\mathbf{ 1.4193 }}$ & $\bl{\mathbf{ 1.4190 }}$ & $\mathbf{ 1.4162 }$ \\ \hline
$ 0.4000 $ & $ -0.2003 $ & $ -0.2019 $ & $\bl{\mathbf{ 1.4002 }}$ & $\bl{\mathbf{ 1.3997 }}$ & $\mathbf{ 1.3988 }$ \\ \hline
$ 0.5000 $ & $ -0.2252 $ & $ -0.2269 $ & $\bl{\mathbf{ 1.3784 }}$ & $\bl{\mathbf{ 1.3781 }}$ & $\mathbf{ 1.3746 }$ \\ \hline
$ 0.6000 $ & $ -0.2569 $ & $ -0.2554 $ & $\bl{\mathbf{ 1.3540 }}$ & $\bl{\mathbf{ 1.3537 }}$ & $\mathbf{ 1.3518 }$ \\ \hline
$ 0.7000 $ & $ -0.2957 $ & $ -0.2934 $ & $\bl{\mathbf{ 1.3263 }}$ & $\bl{\mathbf{ 1.3259 }}$ & $\mathbf{ 1.3192 }$ \\ \hline
$ 0.8000 $ & $ -0.3359 $ & $ -0.3452 $ & $\bl{\mathbf{ 1.2942 }}$ & $\bl{\mathbf{ 1.2936 }}$ & $\mathbf{ 1.2964 }$ \\ \hline
$ 0.9000 $ & $ -0.4137 $ & $ -0.4164 $ & $\bl{\mathbf{ 1.2558 }}$ & $\bl{\mathbf{ 1.2552 }}$ & $\mathbf{ 1.2531 }$ \\ \hline \hline
\end{tabular}
\label{tab:gensplus1xnorm1psi}
\end{table}

\vspace{.2in}
\textbf{\underline{\emph{2) $s=-1$ -- numerical results}}}

Figure \ref{fig:gensmin1xnorm1psi} and Table \ref{tab:gensmin1xnorm1psi} contain the results obtained for $s=-1$. Figure \ref{fig:gensmin1xnorm1psi} again shows the entire range for $t$, whereas Table \ref{tab:gensmin1xnorm1psi} focuses on several particular values of $t$. Similarly to what we had above for $s=1$, here we again have that both, Figure \ref{fig:gensmin1xnorm1psi} and Table \ref{tab:gensmin1xnorm1psi}, show that the agreement between all presented results is fairly strong.

\begin{figure}[htb]
%\begin{minipage}[b]{.5\linewidth}
\centering
\centerline{\epsfig{figure=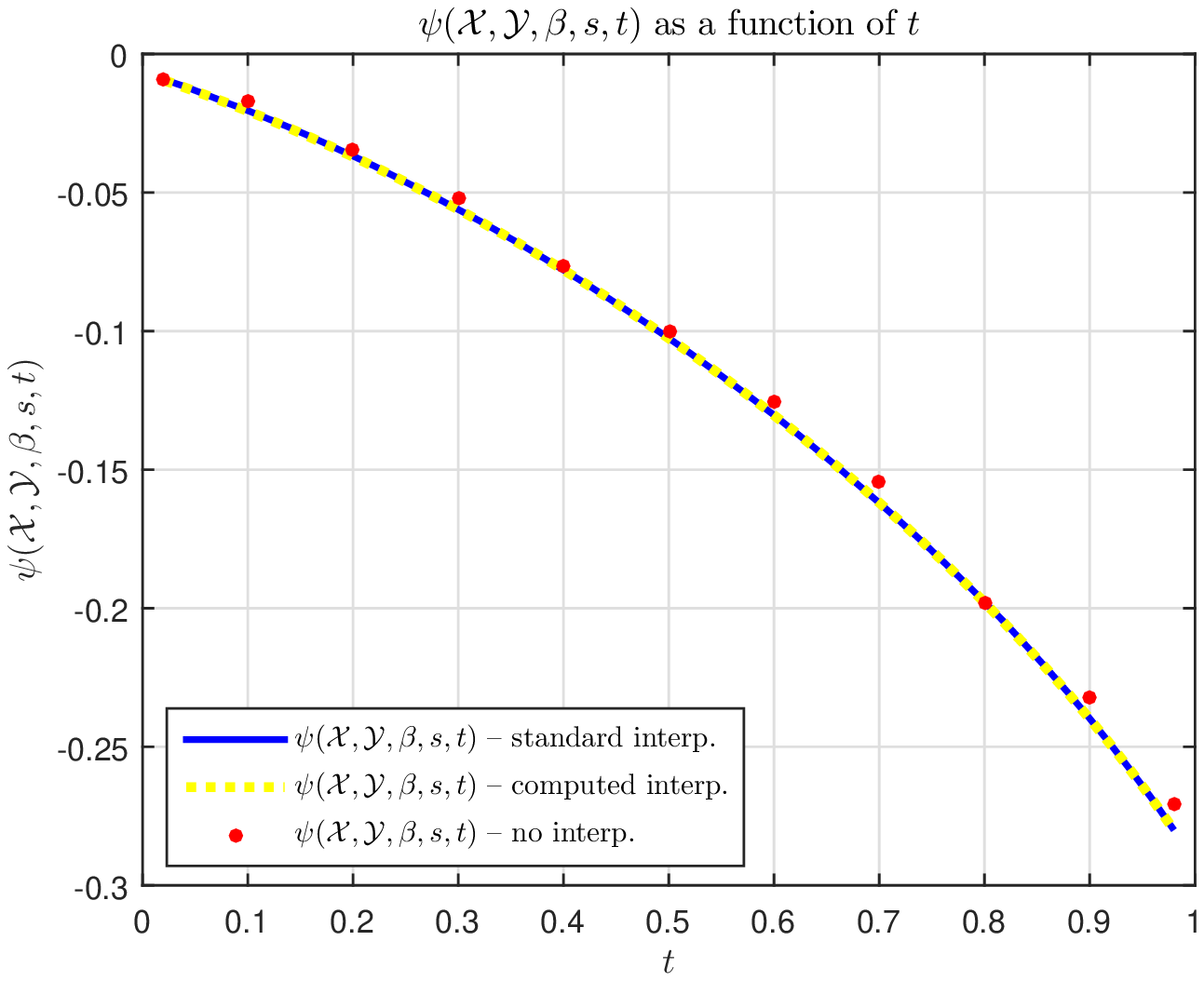,width=11.5cm,height=8cm}}
%\end{minipage}
%\begin{minipage}[b]{.5\linewidth}
%\centering
%\centerline{\epsfig{figure=finprerral08.eps,width=9cm,height=6.5cm}}
%\end{minipage}
\caption{$\psi(\calX,\calY,\beta,s,t)$ as a function of $t$; $m=5$, $n=5$, $l=10$, $\calX=\calX^{+}$, $\calY=\calY^{+}$, $\beta=3$, $s=-1$}
\label{fig:gensmin1xnorm1psi}
\end{figure}
\begin{table}[h]
\caption{Simulated results --- $m=5$, $n=5$, $l=10$, $\calX=\calX^{+}$, $\beta=3$, $s=-1$}\vspace{.1in}
\hspace{-0in}\centering
\begin{tabular}{||c||c|c|c|c|c||}\hline\hline
$ t$  &  $\frac{d\psi}{dt}$; (\ref{eq:genanal11}) & $\frac{d\psi}{dt}$;  (\ref{eq:conalt2}) & $\psi$;  (\ref{eq:genanal11}) and (\ref{eq:co1eq1}) & $\psi$; (\ref{eq:conalt2}) and (\ref{eq:co1eq1}) & $\psi$;  (\ref{eq:genanal8})\\  \hline\hline
$ 0.1000 $ & $ -0.1422 $ & $ -0.1471 $ & $\bl{\mathbf{ -0.0204 }}$ & $\bl{\mathbf{ -0.0206 }}$ & $\mathbf{ -0.0172 }$\\ \hline
$ 0.2000 $ & $ -0.1726 $ & $ -0.1735 $ & $\bl{\mathbf{ -0.0368 }}$ & $\bl{\mathbf{ -0.0370 }}$ & $\mathbf{ -0.0349 }$\\ \hline
$ 0.3000 $ & $ -0.2026 $ & $ -0.2010 $ & $\bl{\mathbf{ -0.0561 }}$ & $\bl{\mathbf{ -0.0561 }}$ & $\mathbf{ -0.0518 }$\\ \hline
$ 0.4000 $ & $ -0.2291 $ & $ -0.2296 $ & $\bl{\mathbf{ -0.0780 }}$ & $\bl{\mathbf{ -0.0780 }}$ & $\mathbf{ -0.0769 }$\\ \hline
$ 0.5000 $ & $ -0.2555 $ & $ -0.2594 $ & $\bl{\mathbf{ -0.1026 }}$ & $\bl{\mathbf{ -0.1026 }}$ & $\mathbf{ -0.1000 }$\\ \hline
$ 0.6000 $ & $ -0.2889 $ & $ -0.2923 $ & $\bl{\mathbf{ -0.1303 }}$ & $\bl{\mathbf{ -0.1304 }}$ & $\mathbf{ -0.1254 }$\\ \hline
$ 0.7000 $ & $ -0.3269 $ & $ -0.3331 $ & $\bl{\mathbf{ -0.1620 }}$ & $\bl{\mathbf{ -0.1619 }}$ & $\mathbf{ -0.1546 }$\\ \hline
$ 0.8000 $ & $ -0.3769 $ & $ -0.3818 $ & $\bl{\mathbf{ -0.1977 }}$ & $\bl{\mathbf{ -0.1979 }}$ & $\mathbf{ -0.1981 }$\\ \hline
$ 0.9000 $ & $ -0.4533 $ & $ -0.4503 $ & $\bl{\mathbf{ -0.2398 }}$ & $\bl{\mathbf{ -0.2400 }}$ & $\mathbf{ -0.2324 }$
\\ \hline \hline
\end{tabular}
\label{tab:gensmin1xnorm1psi}
\end{table}

%%%%%%%%%%%%%%%%%%%%%%%%%%%%%%%%%%%%%%%%%%%%%%%%%%%%%%%%%%%%%%%%%
\subsection{$\beta\rightarrow \infty$}
\label{sec:betainf}
%%%%%%%%%%%%%%%%%%%%%%%%%%%%%%%%%%%%%%%%%%%%%%%%%%%%%%%%%%%%%%%%%

In \cite{Stojnicgscomp16}, we showed that the comparison concepts introduced there in $\beta\rightarrow\infty$ regime simplify to well known forms of Slepian's max and Gordon's minmax principles. Below we show that the comparison principles introduced above behave so to say in a similar way and also contain as a special case (obtained again in $\beta\rightarrow\infty$ regime) both, Slepian's max and Gordon's minmax principles (this time though, the resulting forms are more general). Now, we easily have for the limiting behavior of $\xi(\calX,\calY,\beta,s)$
\begin{eqnarray}\label{eq:betainf1}
\lim_{\beta\rightarrow\infty} \xi(\calX,\calY,\beta,s) & = & \lim_{\beta\rightarrow\infty} \mE_{G,u^{(4)}} \frac{1}{|s|\beta\sqrt{n}} \log\lp\sum_{i=1_2}^{l}\lp \sum_{i=1_2}^{l}e^{\beta\lp (\y^{(i_2)})^T
 G\x^{(i_1)}+\|\x^{(i_1)}\|_2\|\y^{(i_2)}\|_2u^{(4)}\rp} \rp^s\rp \nonumber \\
 & = &
 \lim_{\beta\rightarrow\infty} \mE_{G,u^{(4)}} \frac{1}{|s|\beta\sqrt{n}} \log\lp\sum_{i=1_2}^{l}\lp e^{\beta \max_{\y^{(i_2)}\in \calY}\lp (\y^{(i_2)})^T
 G\x^{(i_1)}+\|\x^{(i_1)}\|_2\|\y^{(i_2)}\|_2u^{(4)}\rp} \rp^s\rp \nonumber \\
 & = &
 \lim_{\beta\rightarrow\infty} \mE_{G,u^{(4)}} \frac{1}{|s|\beta\sqrt{n}} \log\lp e^{\max_{\x^{(i_2)}\in \calX}s\beta \max_{\y^{(i_2)}\in \calY}\lp (\y^{(i_2)})^T
 G\x^{(i_1)}+\|\x^{(i_1)}\|_2\|\y^{(i_2)}\|_2u^{(4)}\rp}\rp \nonumber \\
 & = & \mE_{G,u^{(4)}} \frac{\max_{\x^{(i_1)}\in \calX} \lp \mbox{sign}(s) \max_{\y^{(i)}\in \calY}\lp(\y^{(i_2)})^T
 G\x^{(i)}+\|\x^{(i_1)}\|_2\|\y^{(i_2)}\|_2u^{(4)}\rp\rp}{\sqrt{n}}.
\end{eqnarray}

%%%%%%%%%%%%%%%%%%%%%%%%%%%%%%%%%%%%%%%%%%%%%%%%%%%%%%%%%%%%%%%%%
\subsubsection{$s>0$ -- reestablishing a Slepian's max comparison}
\label{sec:betainfsplus1}
%%%%%%%%%%%%%%%%%%%%%%%%%%%%%%%%%%%%%%%%%%%%%%%%%%%%%%%%%%%%%%%%%

If $s>0$ then (\ref{eq:betainf1}) gives
\begin{eqnarray}\label{eq:betainfsplus1}
\lim_{\beta\rightarrow\infty} \xi(\calX,\beta,s,1)=\mE_{G,u^{(4)}} \frac{\max_{\x^{(i_1)}\in \calX,\y^{(i_2)}\in \calY}\lp(\y^{(i_2)})^T
 G\x^{(i_1)}+\|\x^{(i_1)}\|_2\|\y^{(i_2)}\|_2u^{(4)}\rp}{\sqrt{n}}.
\end{eqnarray}
We now recall that $\xi(\calX,\calY,\beta,s)=\psi(\calX,\calY,\beta,s,1)$ and utilize the above machinery to find
\begin{multline}\label{eq:betainfsplus2}
\mE_{G,u^{(4)}} \frac{\max_{\x^{(i_1)}\in \calX,\y^{(i_2)}\in \calY}\lp(\y^{(i_2)})^T
 G\x^{(i_1)}+\|\x^{(i_1)}\|_2\|\y^{(i_2)}\|_2u^{(4)}\rp}{\sqrt{n}}  =  \lim_{\beta\rightarrow\infty} \xi(\calX,\beta,s,1)=
 \lim_{\beta\rightarrow\infty} \psi(\calX,\calY,\beta,s,1)  \\
 \leq   \lim_{\beta\rightarrow\infty} \psi(\calX,\calY,\beta,s,0)=
\mE_{\u^{(2)},\h} \frac{\max_{\x^{(i_1)}\in \calX,\y^{(i_2)}\in \calY} \lp \|\x^{(i_1)}\|_2(\y^{(i_2)})^T\u^{(2)} +\|\y^{(i_2)}\|_2\h^T\x^{(i_1)}\rp}{\sqrt{n}}.
\end{multline}
Connecting beginning and end in (\ref{eq:betainfsplus2}) we obtain a well-known form of the Slepian comparison principle (see, e.g. \cite{Gordon85,Stojnicgscomp16,Slep62}). As stated above, this form is a stronger counterpart of the corresponding result in \cite{Stojnicgscomp16}, and of course only a special case of a much stronger general concept introduced in Theorem \ref{thm:thm1}.

\textbf{\underline{\emph{Numerical results}}}

As in \cite{Stojnicgscomp16}, we below provide a set of numerical results designed to shed a bit more light on $\beta\rightarrow\infty$ regime. The obtained simulation results are shown in Figure \ref{fig:genbetainfsplus1xnorm1psi} and Table \ref{tab:genbetainfsplus1xnorm1psi}. We kept all parameters the same as above ($s=1$ is chosen for the concreteness; such a choice is also in alignment with the choice made in the simulations shown earlier), with only one change. Now, instead of having $\beta=3$ we have $\beta=10$, which in a way emulates $\beta\rightarrow\infty$.
Both, Figure \ref{fig:genbetainfsplus1xnorm1psi} and Table \ref{tab:genbetainfsplus1xnorm1psi}, show an excellent agreement between all presented results. We also note that a fairly small value of $\beta$, namely, $\beta=10$, seems as a pretty solid approximation of $\beta\rightarrow\infty$. This is especially clear from the right part of Figure \ref{fig:genbetainfsplus1xnorm1psi} where one can observe that for $\beta=10$ the resulting curves are much closer to the purple circles (which effectively represent the $\beta\rightarrow\infty$ regime).

%\begin{figure}[htb]
%%\begin{minipage}[b]{.5\linewidth}
%\centering
%\centerline{\epsfig{figure=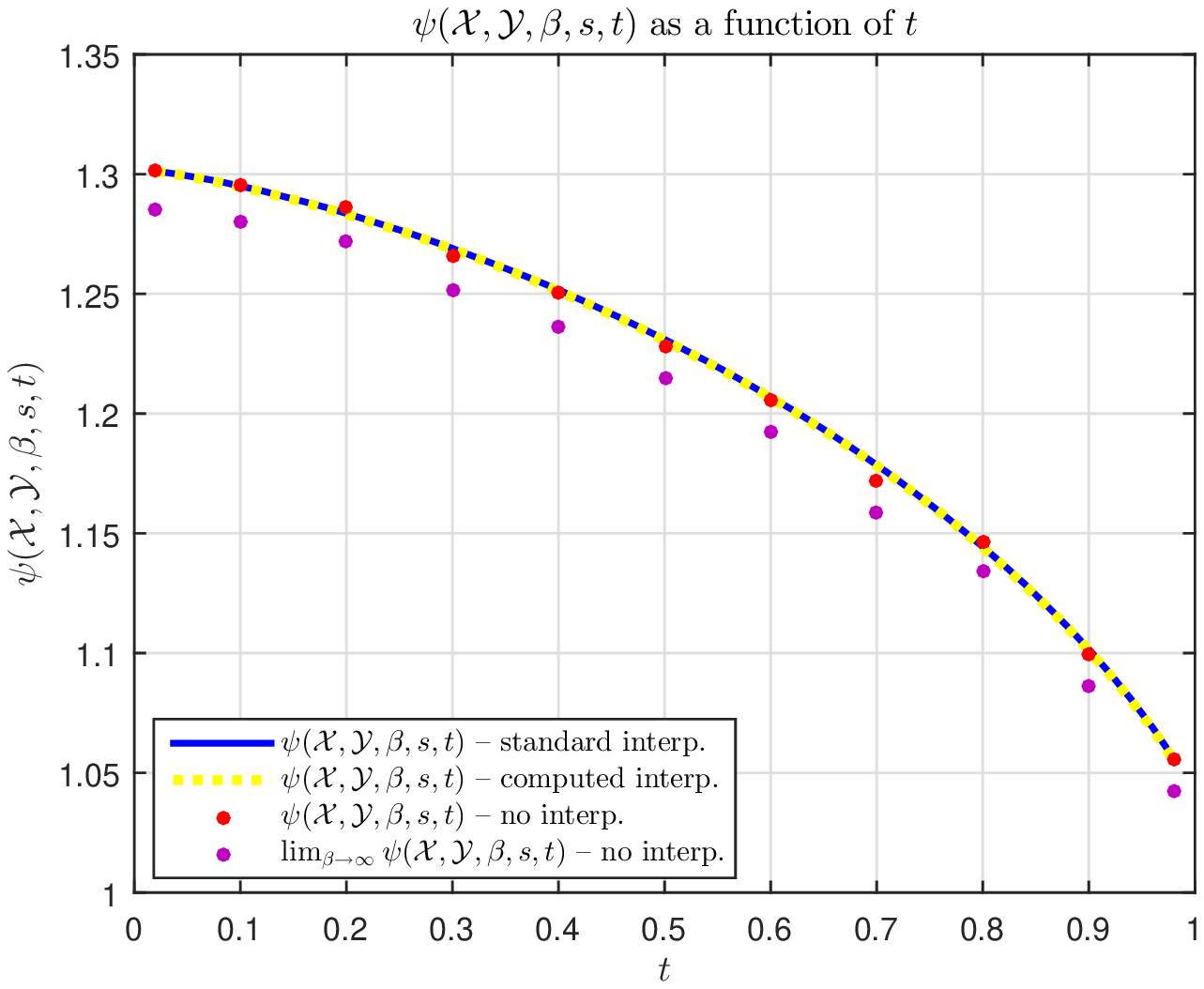,width=11.5cm,height=8cm}}
%%\end{minipage}
%%\begin{minipage}[b]{.5\linewidth}
%%\centering
%%\centerline{\epsfig{figure=finprerral08.eps,width=9cm,height=6.5cm}}
%%\end{minipage}
%\caption{$\psi(\calX,\calY,\beta,s,t)$ as a function of $t$; $m=5$, $n=5$, $l=10$, $\calX=\calX^{+}$, $\calY=\calY^{+}$ $\beta=10$, $s=1$}
%\label{fig:gensplus1xnorm1psi}
%\end{figure}

\begin{figure}[htb]
\begin{minipage}[b]{.5\linewidth}
\centering
\centerline{\epsfig{figure=splus1xnorm1ynorm1beta10psiPAP.eps,width=9cm,height=7cm}}
%\end{minipage}
%\begin{minipage}[b]{.5\linewidth}
%\centering
%\centerline{\epsfig{figure=finprerral08.eps,width=9cm,height=6.5cm}}
\end{minipage}
\begin{minipage}[b]{.5\linewidth}
\centering
\centerline{\epsfig{figure=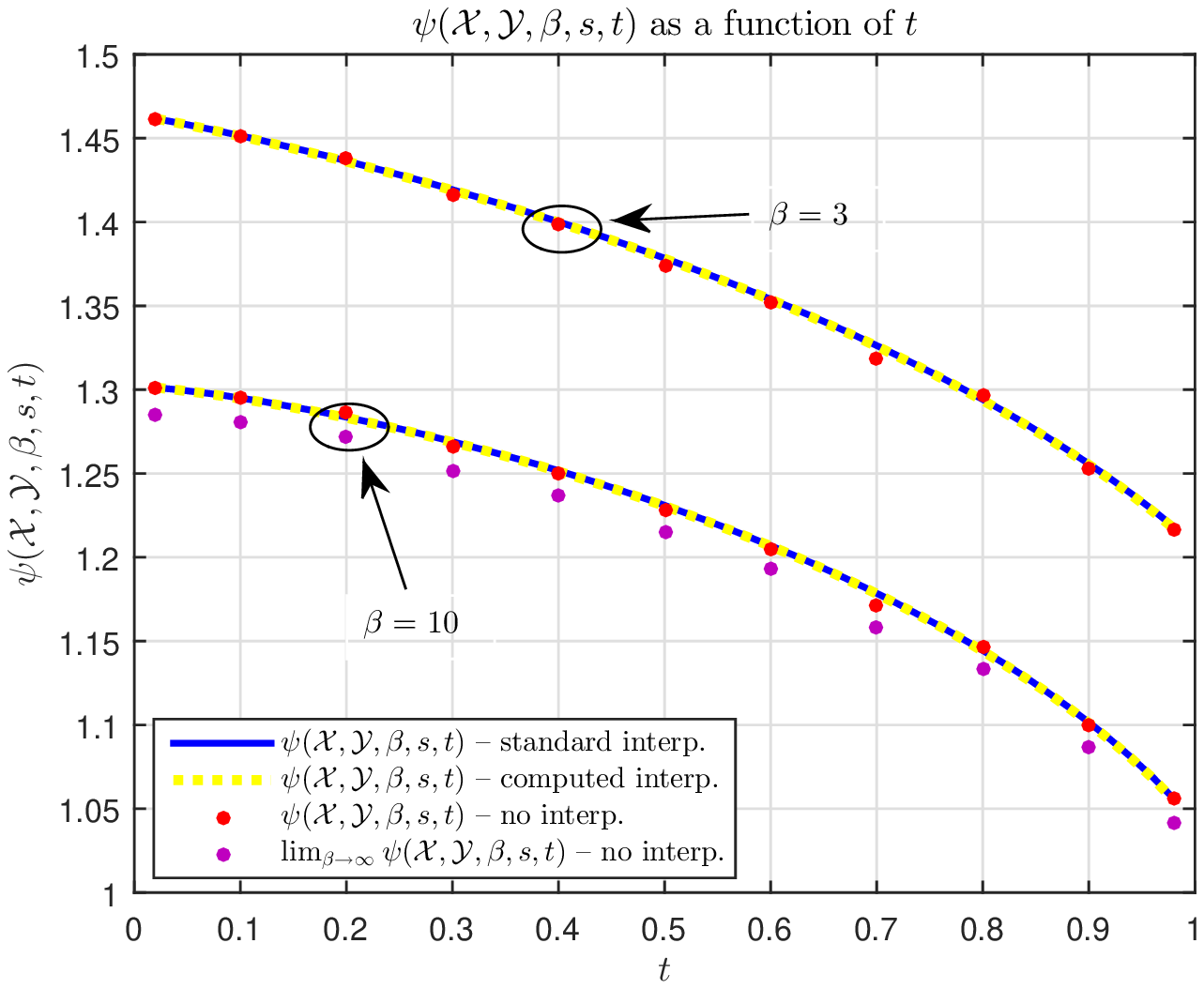,width=9cm,height=7cm}}
%\end{minipage}
%\begin{minipage}[b]{.5\linewidth}
%\centering
%\centerline{\epsfig{figure=finprerral08.eps,width=9cm,height=6.5cm}}
\end{minipage}
\caption{Left -- $\psi(\calX,\calY,\beta,s,t)$ as a function of $t$; $m=5$, $n=5$, $l=10$, $\calX=\calX^{+}$, $\calY=\calY^{+}$, $\beta=10$, $s=1$; right -- comparison between $\beta=3$ and $\beta=10$}
\label{fig:genbetainfsplus1xnorm1psi}
\end{figure}

\begin{table}[h]
\caption{Simulated results --- $m=5$, $n=5$, $l=10$, $\calX=\calX^{+}$, $\calY=\calY^{+}$, $\beta=10$, $s=1$}\vspace{.1in}
\hspace{-0in}\centering
\begin{tabular}{||c||c|c|c|c|c|c||}\hline\hline
$ t$  &  $\frac{d\psi}{dt}$; (\ref{eq:genanal11}) & $\frac{d\psi}{dt}$;  (\ref{eq:conalt2}) & $\psi$;  (\ref{eq:genanal11}) and (\ref{eq:co1eq1}) & $\psi$; (\ref{eq:conalt2}) and (\ref{eq:co1eq1}) & $\psi$;  (\ref{eq:genanal8})& $\lim_{\beta\rightarrow\infty}\psi$;  (\ref{eq:genanal8})\\  \hline\hline
$ 0.1000 $ & $ -0.1002 $ & $ -0.0943 $ & $\bl{\mathbf{ 1.2950 }}$ & $\bl{\mathbf{ 1.2946 }}$ & $\mathbf{ 1.2955 }$& $\prp{\mathbf{ 1.2803 }}$\\ \hline
$ 0.2000 $ & $ -0.1287 $ & $ -0.1264 $ & $\bl{\mathbf{ 1.2836 }}$ & $\bl{\mathbf{ 1.2832 }}$ & $\mathbf{ 1.2859 }$& $\prp{\mathbf{ 1.2715 }}$\\ \hline
$ 0.3000 $ & $ -0.1593 $ & $ -0.1580 $ & $\bl{\mathbf{ 1.2690 }}$ & $\bl{\mathbf{ 1.2687 }}$ & $\mathbf{ 1.2659 }$& $\prp{\mathbf{ 1.2519 }}$\\ \hline
$ 0.4000 $ & $ -0.1847 $ & $ -0.1902 $ & $\bl{\mathbf{ 1.2517 }}$ & $\bl{\mathbf{ 1.2512 }}$ & $\mathbf{ 1.2503 }$& $\prp{\mathbf{ 1.2367 }}$\\ \hline
$ 0.5000 $ & $ -0.2181 $ & $ -0.2182 $ & $\bl{\mathbf{ 1.2310 }}$ & $\bl{\mathbf{ 1.2306 }}$ & $\mathbf{ 1.2276 }$& $\prp{\mathbf{ 1.2143 }}$\\ \hline
$ 0.6000 $ & $ -0.2590 $ & $ -0.2511 $ & $\bl{\mathbf{ 1.2069 }}$ & $\bl{\mathbf{ 1.2067 }}$ & $\mathbf{ 1.2054 }$& $\prp{\mathbf{ 1.1924 }}$\\ \hline
$ 0.7000 $ & $ -0.3061 $ & $ -0.3036 $ & $\bl{\mathbf{ 1.1785 }}$ & $\bl{\mathbf{ 1.1783 }}$ & $\mathbf{ 1.1719 }$& $\prp{\mathbf{ 1.1589 }}$\\ \hline
$ 0.8000 $ & $ -0.3628 $ & $ -0.3778 $ & $\bl{\mathbf{ 1.1444 }}$ & $\bl{\mathbf{ 1.1440 }}$ & $\mathbf{ 1.1469 }$& $\prp{\mathbf{ 1.1338 }}$\\ \hline
$ 0.9000 $ & $ -0.4719 $ & $ -0.4776 $ & $\bl{\mathbf{ 1.1016 }}$ & $\bl{\mathbf{ 1.1011 }}$ & $\mathbf{ 1.0997 }$& $\prp{\mathbf{ 1.0865 }}$
\\ \hline\hline
\end{tabular}
\label{tab:genbetainfsplus1xnorm1psi}
\end{table}

%%%%%%%%%%%%%%%%%%%%%%%%%%%%%%%%%%%%%%%%%%%%%%%%%%%%%%%%%%%%%%%%%
\subsubsection{$s<0$ -- reestablishing a Gordon's minmax comparison}
\label{sec:betainfsminus1}
%%%%%%%%%%%%%%%%%%%%%%%%%%%%%%%%%%%%%%%%%%%%%%%%%%%%%%%%%%%%%%%%%

For $s<0$, (\ref{eq:betainf1}) gives
\begin{eqnarray}\label{eq:betainfsminus1}
\lim_{\beta\rightarrow\infty} \xi(\calX,\beta,s,1) & = & \mE_{G,u^{(4)}} \frac{\max_{\x^{(i_1)}\in \calX}\lp-\max_{\y^{(i_2)}\in \calY}\lp(\y^{(i_2)})^T
 G\x^{(i_1)}+\|\x^{(i_1)}\|_2\|\y^{(i_2)}\|_2u^{(4)}\rp\rp}{\sqrt{n}}\nonumber \\
& = & - \mE_{G,u^{(4)}} \frac{\min_{\x^{(i_1)}\in \calX}\max_{\y^{(i_2)}\in \calY}\lp(\y^{(i_2)})^T
 G\x^{(i_1)}+\|\x^{(i_1)}\|_2\|\y^{(i_2)}\|_2u^{(4)}\rp}{\sqrt{n}}.
\end{eqnarray}
We can now again rely on $\xi(\calX,\calY,\beta,s)=\psi(\calX,\calY,\beta,s,1)$ and the above machinery to obtain
\begin{multline}\label{eq:betainfsminus2}
- \mE_{G,u^{(4)}} \frac{\min_{\x^{(i_1)}\in \calX}\max_{\y^{(i_2)}\in \calY}\lp(\y^{(i_2)})^T
 G\x^{(i_1)}+\|\x^{(i_1)}\|_2\|\y^{(i_2)}\|_2u^{(4)}\rp}{\sqrt{n}}  =  \lim_{\beta\rightarrow\infty} \xi(\calX,\calY,\beta,s)\\=
 \lim_{\beta\rightarrow\infty} \psi(\calX,\calY,\beta,s,1)
 \leq   \lim_{\beta\rightarrow\infty} \psi(\calX,\beta,s,0)\\=
\mE_{\u^{(2)},\h} \frac{\max_{\x^{(i_1)}\in \calX} \lp -\max_{\y^{(i_2)}\in \calY}\lp \|\x^{(i_1)}\|_2(\y^{(i_2)})^T\u^{(2)} +\|\y^{(i_2)}\|_2\h^T\x^{(i_1)}\rp\rp}{\sqrt{n}}  \\
 =
- \mE_{\u^{(2)},\h} \frac{\min_{\x^{(i_1)}\in \calX} \lp \max_{\y^{(i_2)}\in \calY}\lp \|\x^{(i_1)}\|_2(\y^{(i_2)})^T\u^{(2)} +\|\y^{(i_2)}\|_2\h^T\x^{(i_1)}\rp\rp}{\sqrt{n}} .
\end{multline}
Connecting beginning and end in (\ref{eq:betainfsminus2}) one obtains a form of the well-known Gordon comparison principle \cite{Gordon85} which is an upgrade on the above mentioned Slepian's comparison principle. As was the case above when we discussed specialization to the Slepian's max principle, (\ref{eq:betainfsminus2}) is a stronger counterpart of the corresponding result in \cite{Stojnicgscomp16}, and only a special case of a much stronger concept presented in Theorem \ref{thm:thm1}.

%\begin{figure}[htb]
%%\begin{minipage}[b]{.5\linewidth}
%\centering
%\centerline{\epsfig{figure=smin1xnorm1beta10psiPAP.eps,width=11.5cm,height=8cm}}
%%\end{minipage}
%%\begin{minipage}[b]{.5\linewidth}
%%\centering
%%\centerline{\epsfig{figure=finprerral08.eps,width=9cm,height=6.5cm}}
%%\end{minipage}
%\caption{$\psi(\calX,\calY,\beta,s,t)$ as a function of $t$; $m=5$, $n=5$, $l=10$, $\calX=\calX^{+}$, $\beta=10$, $s=-1$}
%\label{fig:gensmin1xnorm1psi}
%\end{figure}

\textbf{\underline{\emph{Numerical results}}}

In Figure \ref{fig:genbetainfsmin1xnorm1psi} and Table \ref{tab:genbetainfsmin1xnorm1psi} results obtained through  simulations are shown. All parameters are again the same as earlier (this time though, for the concreteness we set $s=-1$). From both, Figure \ref{fig:genbetainfsmin1xnorm1psi} and Table \ref{tab:genbetainfsmin1xnorm1psi}, one can again observe a solid agreement between all the presented results with $\beta=10$ being a pretty good approximation of $\beta\rightarrow\infty$.

%\begin{figure}[htb]
%%\begin{minipage}[b]{.5\linewidth}
%\centering
%\centerline{\epsfig{figure=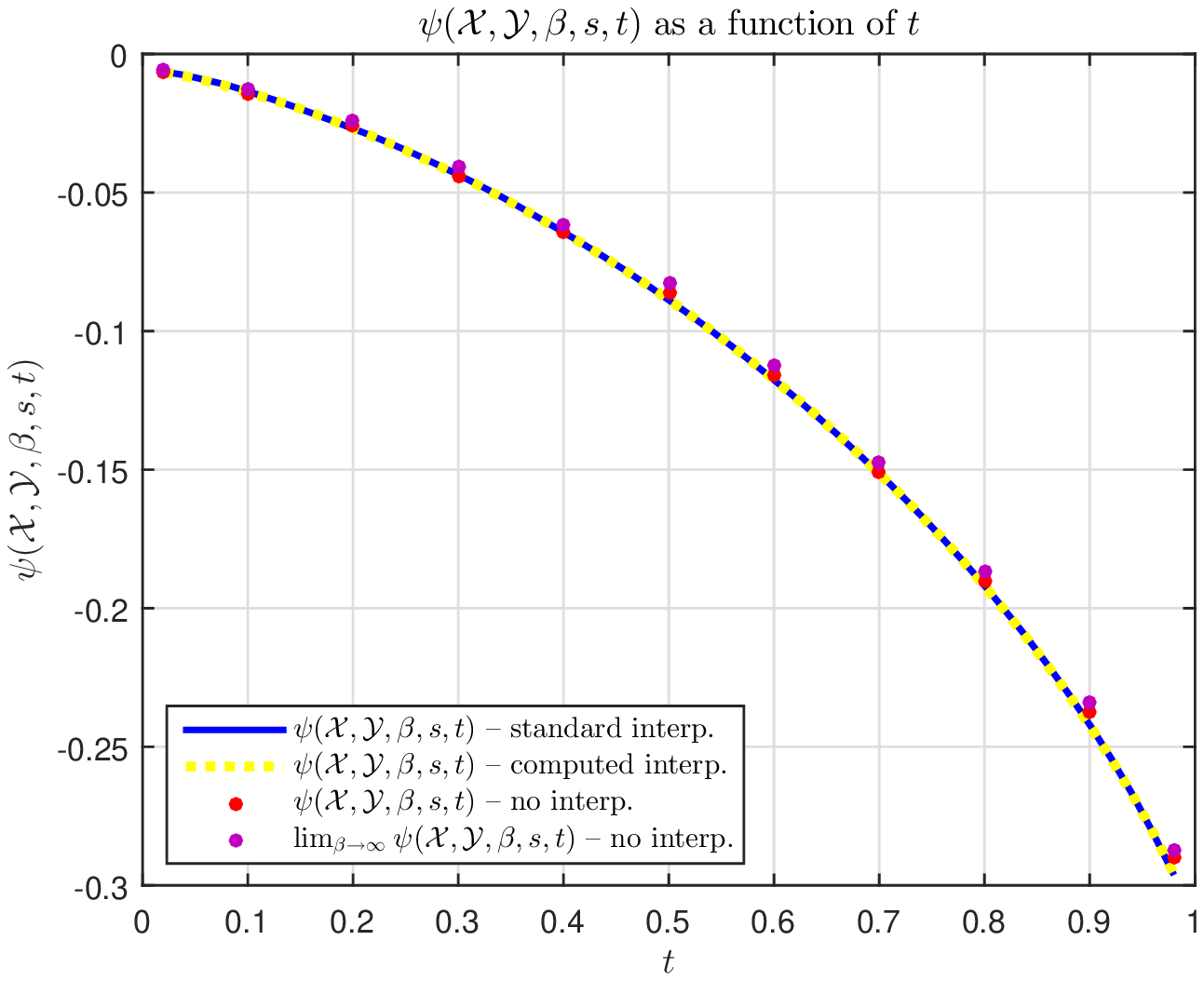,width=11.5cm,height=8cm}}
%%\end{minipage}
%%\begin{minipage}[b]{.5\linewidth}
%%\centering
%%\centerline{\epsfig{figure=finprerral08.eps,width=9cm,height=6.5cm}}
%%\end{minipage}
%\caption{$\psi(\calX,\calY,\beta,s,t)$ as a function of $t$; $m=5$, $n=5$, $l=10$, $\calX=\calX^{+}$, $\beta=10$, $s=-1$}
%\label{fig:gensplus1xnorm1psi}
%\end{figure}

\begin{figure}[htb]
\begin{minipage}[b]{.5\linewidth}
\centering
\centerline{\epsfig{figure=smin1xnorm1ynorm1beta10psiPAP.eps,width=9cm,height=7cm}}
%\end{minipage}
%\begin{minipage}[b]{.5\linewidth}
%\centering
%\centerline{\epsfig{figure=finprerral08.eps,width=9cm,height=6.5cm}}
\end{minipage}
\begin{minipage}[b]{.5\linewidth}
\centering
\centerline{\epsfig{figure=,width=9cm,height=7cm}}
%\end{minipage}
%\begin{minipage}[b]{.5\linewidth}
%\centering
%\centerline{\epsfig{figure=finprerral08.eps,width=9cm,height=6.5cm}}
\end{minipage}
\caption{Left -- $\psi(\calX,\calY,\beta,s,t)$ as a function of $t$; $m=5$, $n=5$, $l=10$, $\calX=\calX^{+}$, $\calY=\calY^{+}$, $\beta=10$, $s=-1$; right -- comparison between $\beta=3$ and $\beta=10$}
\label{fig:genbetainfsmin1xnorm1psi}
\end{figure}

\begin{table}[h]
\caption{Simulated results --- $m=5$, $n=5$, $l=10$, $\calX=\calX^{+}$, $\beta=10$, $s=-1$}\vspace{.1in}
\hspace{-0in}\centering
\begin{tabular}{||c||c|c|c|c|c|c||}\hline\hline
$ t$  &  $\frac{d\psi}{dt}$; (\ref{eq:genanal11}) & $\frac{d\psi}{dt}$;  (\ref{eq:conalt2}) & $\psi$;  (\ref{eq:genanal11}) and (\ref{eq:co1eq1}) & $\psi$; (\ref{eq:conalt2}) and (\ref{eq:co1eq1}) & $\psi$;  (\ref{eq:genanal8})& $\lim_{\beta\rightarrow\infty}\psi$;  (\ref{eq:genanal8})\\  \hline\hline
$ 0.1000 $ & $ -0.1196 $ & $ -0.1069 $ & $\bl{\mathbf{ -0.0135 }}$ & $\bl{\mathbf{ -0.0138 }}$ & $\mathbf{ -0.0141 }$& $\prp{\mathbf{ -0.0128 }}$\\ \hline
$ 0.2000 $ & $ -0.1437 $ & $ -0.1478 $ & $\bl{\mathbf{ -0.0270 }}$ & $\bl{\mathbf{ -0.0269 }}$ & $\mathbf{ -0.0261 }$& $\prp{\mathbf{ -0.0241 }}$\\ \hline
$ 0.3000 $ & $ -0.1839 $ & $ -0.1840 $ & $\bl{\mathbf{ -0.0436 }}$ & $\bl{\mathbf{ -0.0438 }}$ & $\mathbf{ -0.0438 }$& $\prp{\mathbf{ -0.0411 }}$\\ \hline
$ 0.4000 $ & $ -0.2203 $ & $ -0.2194 $ & $\bl{\mathbf{ -0.0643 }}$ & $\bl{\mathbf{ -0.0644 }}$ & $\mathbf{ -0.0646 }$& $\prp{\mathbf{ -0.0615 }}$\\ \hline
$ 0.5000 $ & $ -0.2642 $ & $ -0.2587 $ & $\bl{\mathbf{ -0.0888 }}$ & $\bl{\mathbf{ -0.0887 }}$ & $\mathbf{ -0.0861 }$& $\prp{\mathbf{ -0.0826 }}$\\ \hline
$ 0.6000 $ & $ -0.3036 $ & $ -0.3070 $ & $\bl{\mathbf{ -0.1176 }}$ & $\bl{\mathbf{ -0.1175 }}$ & $\mathbf{ -0.1160 }$& $\prp{\mathbf{ -0.1120 }}$\\ \hline
$ 0.7000 $ & $ -0.3563 $ & $ -0.3662 $ & $\bl{\mathbf{ -0.1514 }}$ & $\bl{\mathbf{ -0.1514 }}$ & $\mathbf{ -0.1511 }$& $\prp{\mathbf{ -0.1471 }}$\\ \hline
$ 0.8000 $ & $ -0.4352 $ & $ -0.4442 $ & $\bl{\mathbf{ -0.1918 }}$ & $\bl{\mathbf{ -0.1920 }}$ & $\mathbf{ -0.1905 }$& $\prp{\mathbf{ -0.1867 }}$\\ \hline
$ 0.9000 $ & $ -0.5560 $ & $ -0.5548 $ & $\bl{\mathbf{ -0.2420 }}$ & $\bl{\mathbf{ -0.2424 }}$ & $\mathbf{ -0.2378 }$& $\prp{\mathbf{ -0.2343 }}$
\\ \hline\hline
\end{tabular}
\label{tab:genbetainfsmin1xnorm1psi}
\end{table}

%%%%%%%%%%%%%%%%%%%%%%%%%%%%%%%%%%%%%%%%%%%%%%%%%%%%%%%%%%%%%%%%%
\section{A lifting procedure}
\label{sec:lifting}
%%%%%%%%%%%%%%%%%%%%%%%%%%%%%%%%%%%%%%%%%%%%%%%%%%%%%%%%%%%%%%%%%

We start again with sets $\calX$ and $\calY$ and consider the following function
\begin{eqnarray}\label{eq:liftgenanal1}
 f_*(G,u^{(4)},\calX,\calY,\beta,s)= \lp \sum_{i_1=1}^{l}\lp\sum_{i_2=1}^{l}e^{\beta \lp (\y^{(i_2)})^T
 G\x^{(i_1)}+\|\x^{(i_1)}\|_2\|\y^{(i_2)}\|_2 u^{(4)}\rp} \rp^{s}\rp^{c_3},
\end{eqnarray}
where all quantities are as in Section \ref{sec:gencon} and $c_3>0$ is a real parameter. Following (\ref{eq:liftgenanal2}) we then introduce
\begin{eqnarray}\label{eq:liftgenanal2}
\xi_*(\calX,\calY,\beta,s)  \triangleq   \mE_{G,u^{(4)}} f(G,u^{(4)},\calX,\calY,\beta,s),
\end{eqnarray}
and consider the following interpolating function $\psi_*(\cdot)$ as an object convenient for studying properties of $\xi_*(\calX,\calY,\beta,s)$
\begin{multline}\label{eq:liftgenanal3}
\psi_*(\calX,\calY,\beta,s,t)  =  \mE_{G,u^{(4)},\u^{(2)},\h}  \\
 \times \lp \sum_{i_1=1}^{l}\lp\sum_{i_2=1}^{l}e^{\beta \lp \sqrt{t}(\y^{(i_2)})^T
 G\x^{(i_1)}+\sqrt{1-t}\|\x^{(i_2)}\|_2 (\y^{(i_2)})^T\u^{(2)}+\sqrt{t}\|\x^{(i_1)}\|_2\|\y^{(i_2)}\|_2,u^{(4)} +\sqrt{1-t}\|\y^{(i_2)}\|_2\h^T\x^{(i)}\rp} \rp^{s}\rp^{c_3},
\end{multline}
where again, all quantities are exactly the same as earlier with the above mentioned addition of $c_3$. Following  (\ref{eq:liftgenanal8}) (and clearly relying on (\ref{eq:genanal7})) we write
\begin{eqnarray}\label{eq:liftgenanal8}
\psi_*(\calX,\calY,\beta,s,t) & = &  \mE_{\u^{(i_1,1)},\u^{(2)},\u^{(i_1,3)},u^{(4)}} Z^{c_3}.
\end{eqnarray}
Following further the strategy of Section \ref{sec:gencon}, below we study the monotonicity of $\psi_*(\calX,\calY,\beta,s,t)$ when viewed as a function of $t$. As it will be soon clear, many of the results that we created in Section \ref{sec:gencon} with fairly straightforward modifications will be applicable here as well. As usual, we will try to skip all the details that remain the same and instead will put an emphasis on those that bring a difference. We start with the following derivative (basically an analogous version of (\ref{eq:genanal9}))
\begin{eqnarray}\label{eq:liftgenanal9}
\frac{d\psi_*(\calX,\calY,\beta,s,t)}{dt} & = &  \mE_{\u^{(i_1,1)},\u^{(2)},\u^{(i_1,3)},u^{(4)}} \frac{dZ^{c_3}}{dt}\nonumber \\
& = &  \mE_{\u^{(i_1,1)},\u^{(2)},\u^{(i_1,3)},u^{(4)}} \frac{sc_3}{Z^{1-c_3}}  \sum_{i=1}^{l} (C^{(i_1)})^{s-1} \nonumber \\
& & \times \sum_{i_2=1}^{l}\beta_{i_1}A^{(i_1,i_2)}\lp \frac{dB^{(i_1,i_2)}}{dt}+\frac{\|\y^{(i_2)}\|_2 u^{(4)}}{2\sqrt{t}}-\frac{\|\y^{(i_2)}\|_2 \u^{(i_1,3)}}{2\sqrt{1-t}}\rp.
\end{eqnarray}
Relying on (\ref{eq:genanal10}) we further find
\begin{eqnarray}\label{eq:liftgenanal11}
\frac{d\psi_*(\calX,\calY,\beta,s,t)}{dt}
& = &  \mE_{\u^{(i_1,1)},\u^{(2)},\u^{(i_1,3)},u^{(4)}} \frac{sc_3}{Z^{1-c_3}}  \sum_{i_1=1}^{l} (C^{(i_1)})^{s-1} \nonumber \\
& & \times \sum_{i_2=1}^{l}\beta_{i_1}A^{(i_1,i_2)}\lp \sum_{j=1}^{m}\lp \frac{\y_j^{(i_2)}\u_j^{(i_1,1)}}{2\sqrt{t}}-\frac{\y_j^{(i_2)}\u_j^{(2)}}{2\sqrt{1-t}}\rp+\frac{\|\y^{(i_2)}\|_2 u^{(4)}}{2\sqrt{t}}-\frac{\|\y^{(i_2)}\|_2 \u^{(i_1,3)}}{2\sqrt{1-t}}\rp.\nonumber \\
\end{eqnarray}
As in Section \ref{sec:gencon}, each of the terms in the above sum can be handled separately. However, this time the calculations will be done in a much faster fashion as one can utilize quite a few of the results already obtained earlier.

%%%%%%%%%%%%%%%%%%%%%%%%%%%%%%%%%%%%%%%%%%%%%%%%%%%%%%%%%%%%%%%%%%%%%%%%
\subsection{Computing $\frac{d\psi_*(\calX,\calY,\beta,s,t)}{dt}$}
\label{sec::liftder}
%%%%%%%%%%%%%%%%%%%%%%%%%%%%%%%%%%%%%%%%%%%%%%%%%%%%%%%%%%%%%%%%%%%%%%%%

As mentioned above, we will split the computation into several parts.

%%%%%%%%%%%%%%%%%%%%%%%%%%%%%%%%%%%%%%%%%%%%%%%%%%%%%%%%%%%%%%%%%%%%%%%%
\underline{\textbf{\emph{1) Determining $\mE_{\u^{(i_1,1)},\u^{(2)},\u^{(i_1,3)},u^{(4)}}  \frac{(C^{(i_1)})^{s-1} A^{(i_1,i_2)}\u_j^{(i_1,1)}\y_j^{(i_2)}}{Z^{1-c_3}}$
}}}
%\label{sec:lifthand1}
%%%%%%%%%%%%%%%%%%%%%%%%%%%%%%%%%%%%%%%%%%%%%%%%%%%%%%%%%%%%%%%%%%%%%%%%

Now, the key observation that we will employ here (and quite a few times again below) is that all the main calculations from Section \ref{sec:gencon} can be repeated and not only conceptually but pretty much literally with very small modifications. These modifications will be in the powers of $Z$ and the constants that multiply them. Namely where we used to have $Z$ in Section \ref{sec:hand1} we will now have $Z^{1-c_3}$ and where we used to have $-Z^{-2}$ we will now have $(c_3-1)Z^{c_3-2}$. All other adjustments are trivial and one finds
\begin{multline}\label{eq:liftgenanal19}
\mE_{\u^{(i_1,1)},\u^{(2)},\u^{(i_1,3)},u^{(4)}}  \frac{(C^{(i_1)})^{s-1} A^{(i_1,i_2)}\u_j^{(i_1,1)}\y_j^{(i_2)}}{Z^{1-c_3}}  \\
 =
\mE \lp \frac{\y_j^{(i_2)}}{Z^{1-c_3}}\lp(C^{(i_1)})^{s-1}\beta_{i_1}A^{(i_1,i_2)}\y_j^{(i_2)}\sqrt{t}+(s-1)(C^{(i_1)})^{s-2}\beta_{i_1}\sum_{p_2=1}^{l}A^{(i_1,p_2)}\y_j^{(p_2)}\sqrt{t}\rp \rp \\
-(1-c_3)
\mE \lp\sum_{p_1=1}^{l} \frac{(\x^{(i_1)})^T\x^{(p_1)}}{\|\x^{(i_1)}\|_2\|\x^{(p_1)}\|_2}
\frac{(C^{(i_1)})^{s-1} A^{(i_1,i_2)}\y_j^{(i_2)}}{Z^{2-c_3}}
s  (C^{(p_1)})^{s-1}\sum_{p_2=1}^{l}\beta_{p_1}A^{(p_1,p_2)}\y_j^{(p_2)}\sqrt{t}\rp.
\end{multline}

%%%%%%%%%%%%%%%%%%%%%%%%%%%%%%%%%%%%%%%%%%%%%%%%%%%%%%%%%%%%%%%%%%%%%%%%
\underline{\textbf{\emph{2) Determining $\mE_{\u^{(i_1,1)},\u^{(2)},\u^{(i_1,3)},u^{(4)}}  \frac{(C^{(i_1)})^{s-1} A^{(i_1,i_2)}\u_j^{(2)}\y_j^{(i_2)}}{Z^{1-c_3}}$
}}}
%\label{sec:lifthand2}
%%%%%%%%%%%%%%%%%%%%%%%%%%%%%%%%%%%%%%%%%%%%%%%%%%%%%%%%%%%%%%%%%%%%%%%%

Repeating all the calculations from Section \ref{sec:hand2} with the above mentioned modifications we also find
\begin{multline}\label{eq:liftgenAanal19}
\mE_{\u^{(i_1,1)},\u^{(2)},\u^{(i_1,3)},u^{(4)}}  \frac{(C^{(i_1)})^{s-1} A^{(i_1,i_2)}\u_j^{(2)}\y_j^{(i_2)}}{Z^{1-c_3}}  \\
 =
\mE \lp\frac{\y_j^{(i_2)}}{Z^{1-c_3}}\lp(C^{(i_1)})^{s-1}\beta_{i_1}A^{(i_1,i_2)}\y_j^{(i_2)}\sqrt{1-t}+(s-1)(C^{(i_1)})^{s-2}\beta_{i_1}\sum_{p_2=1}^{l}A^{(i_1,p_2)}\y_j^{(p_2)}\sqrt{1-t}\rp \rp \\
-(1-c_3)
\mE \lp\sum_{p_1=1}^{l}
\frac{(C^{(i_1)})^{s-1} A^{(i_1,i_2)}\y_j^{(i_2)}}{Z^{2-c_3}}
s  (C^{(p_1)})^{s-1}\sum_{p_2=1}^{l}\beta_{p_1}A^{(p_1,p_2)}\y_j^{(p_2)}\sqrt{1-t}\rp.
\end{multline}

%%%%%%%%%%%%%%%%%%%%%%%%%%%%%%%%%%%%%%%%%%%%%%%%%%%%%%%%%%%%%%%%%%%%%%%%
\underline{\textbf{\emph{3) Determining $\mE_{\u^{(i_1,1)},\u^{(2)},\u^{(i_1,3)},u^{(4)}}  \frac{(C^{(i_1)})^{s-1} A^{(i_1,i_2)}\u^{(i_1,3)}}{Z^{1-c_3}}$
}}}
%\label{sec:lifthand3}
%%%%%%%%%%%%%%%%%%%%%%%%%%%%%%%%%%%%%%%%%%%%%%%%%%%%%%%%%%%%%%%%%%%%%%%%

Similarly to what we did above, one can also repeat all the calculations from Section \ref{sec:hand3} while accounting for the above mentioned change of powers and multiplying constants we have
the following analogue of (\ref{eq:genBanal19})
\begin{multline}\label{eq:liftgenBanal19}
\mE_{\u^{(i_1,1)},\u^{(2)},\u^{(i_1,3)},u^{(4)}}  \frac{(C^{(i_1)})^{s-1} A^{(i_1,i_2)}\u^{(i_1,3)}}{Z^{1-c_3}}  \\
 =
\mE \lp\frac{1}{Z^{1-c_3}}\lp(C^{(i_1)})^{s-1}\beta_{i_1}A^{(i_1,i_2)}\|\y^{(i_2)}\|_2\sqrt{1-t}+(s-1)(C^{(i_1)})^{s-2}\beta_{i_1}\sum_{p_2=1}^{l}A^{(i_1,p_2)}\|\y^{(p_2)}\|_2\sqrt{1-t}\rp \rp \\
-{1-c_3}
\mE \lp\sum_{p_1=1}^{l}\frac{(\x^{(i_1)})^T\x^{(p_1)}}{\|\x^{(i_1)}\|_2\|\x^{(p_1)}\|_2}
\frac{(C^{(i_1)})^{s-1} A^{(i_1,i_2)}}{Z^{2-c_3}}
s  (C^{(p_1)})^{s-1}\sum_{p_2=1}^{l}\beta_{p_1}A^{(p_1,p_2)}\|\y^{(p_2)}\|_2\sqrt{1-t}\rp.
\end{multline}

%%%%%%%%%%%%%%%%%%%%%%%%%%%%%%%%%%%%%%%%%%%%%%%%%%%%%%%%%%%%%%%%%%%%%%%%
\underline{\textbf{\emph{4) Determining $\mE_{\u^{(i_1,1)},\u^{(2)},\u^{(i_1,3)},u^{(4)}}  \frac{(C^{(i_1)})^{s-1} A^{(i_1,i_2)}u^{(4)}}{Z^{1-c_3}}$
}}}
%\label{sec:lifthand4}
%%%%%%%%%%%%%%%%%%%%%%%%%%%%%%%%%%%%%%%%%%%%%%%%%%%%%%%%%%%%%%%%%%%%%%%%

Finally, after repeating all the calculations from Section \ref{sec:hand4} we have the following analogue to (\ref{eq:genCanal19})
\begin{multline}\label{eq:liftgenCanal19}
\mE_{\u^{(i_1,1)},\u^{(2)},\u^{(i_1,3)},u^{(4)}}  \frac{(C^{(i_1)})^{s-1} A^{(i_1,i_2)}u^{(4)}}{Z^{1-c_3}}  \\
 =
\mE \lp\frac{1}{Z^{1-c_3}}\lp(C^{(i_1)})^{s-1}\beta_{i_1}A^{(i_1,i_2)}\|\y^{(i_2)}\|_2\sqrt{t}+(s-1)(C^{(i_1)})^{s-2}\beta_{i_1}\sum_{p_2=1}^{l}A^{(i_1,p_2)}\|\y^{(p_2)}\|_2\sqrt{t}\rp \rp \\
-{1-c_3}
\mE \lp
\frac{(C^{(i_1)})^{s-1} A^{(i_1,i_2)}}{Z^{2-c_3}}
s  \sum_{p_1=1}^{l} (C^{(p_1)})^{s-1}\sum_{p_2=1}^{l}\beta_{p_1}A^{(p_1,p_2)}\|\y^{(p_2)}\|_2\sqrt{t}\rp.
\end{multline}

%%%%%%%%%%%%%%%%%%%%%%%%%%%%%%%%%%%%%%%%%%%%%%%%%%%%%%%%%%%%%%%%%%%%%%%%
\underline{\textbf{\emph{Connecting everything together
}}}
%\label{sec:liftconalt}
%%%%%%%%%%%%%%%%%%%%%%%%%%%%%%%%%%%%%%%%%%%%%%%%%%%%%%%%%%%%%%%%%%%%%%%%

Combining (\ref{eq:liftgenanal11}), (\ref{eq:liftgenanal19}), (\ref{eq:liftgenAanal19}), (\ref{eq:liftgenBanal19}), and (\ref{eq:liftgenCanal19}) we can also establish the following set of results (basically fairly similar to the corresponding set obtained in Section \ref{sec:conalt})
\begin{eqnarray}\label{eq:liftconalt1}
\frac{d\psi_*(\calX,\calY,\beta,s,t)}{dt}
 =  \frac{sc_3(1-c_3)}{2} \mE_{\u^{(i_1,1)},\u^{(2)},\u^{(i_1,3)},u^{(4)}} (-S_1+S_2+S_3-S_4)
\end{eqnarray}
where
\begin{eqnarray}
\label{eq:liftconalt1a}
S_{1,*} & = & \sum_{i_1=1}^{l} \sum_{i_2=1}^{l}\beta_{i_1}\sum_{j=1}^{m}
\lp\sum_{p_1=1}^{l} \frac{(\x^{(i_1)})^T\x^{(p_1)}}{\|\x^{(i_1)}\|_2\|\x^{(p_1)}\|_2}
\frac{(C^{(i_1)})^{s-1} A^{(i_1,i_2)}\y_j^{(i_2)}}{Z^{2-c_3}}
s  (C^{(p_1)})^{s-1}\sum_{p_2=1}^{l}\beta_{p_1}A^{(p_1,p_2)}\y_j^{(p_2)}\rp\nonumber \\
S_{2,*} & = & \sum_{i_1=1}^{l} \sum_{i_2=1}^{l}\beta_{i_1}\sum_{j=1}^{m}
 \lp\sum_{p_1=1}^{l}
\frac{(C^{(i_1)})^{s-1} A^{(i_1,i_2)}\y_j^{(i_2)}}{Z^{2-c_3}}
s  (C^{(p_1)})^{s-1}\sum_{p_2=1}^{l}\beta_{p_1}A^{(p_1,p_2)}\y_j^{(p_2)}\rp\nonumber \\
S_{3,*} & = & \sum_{i_1=1}^{l} \sum_{i_2=1}^{l}\beta_{i_1}
\|\y^{(i_2)}\|_2 \lp\sum_{p_1=1}^{l}\frac{(\x^{(i_1)})^T\x^{(p_1)}}{\|\x^{(i_1)}\|_2\|\x^{(p_1)}\|_2}
\frac{(C^{(i_1)})^{s-1} A^{(i_1,i_2)}}{Z^{2-c_3}}
s  (C^{(p_1)})^{s-1}\sum_{p_2=1}^{l}\beta_{p_1}A^{(p_1,p_2)}\|\y^{(p_2)}\|_2\rp\nonumber \\
S_{4,*} & = & \sum_{i_1=1}^{l} \sum_{i_2=1}^{l}\beta_{i_1}
\|\y^{(i_2)}\|_2 \lp
\frac{(C^{(i_1)})^{s-1} A^{(i_1,i_2)}}{Z^{2-c_3}}
s \sum_{p_1=1}^{l}  (C^{(p_1)})^{s-1}\sum_{p_2=1}^{l}\beta_{p_1}A^{(p_1,p_2)}\|\y^{(p_2)}\|_2\rp.
\end{eqnarray}
Repeating (\ref{eq:conalt1b}) and (\ref{eq:conalt1c})
and combining these steps with (\ref{eq:liftconalt1a}) we finally obtain
\begin{eqnarray}\label{eq:liftconalt2}
\frac{d\psi_*(\calX,\calY,\beta,s,t)}{dt}
 & = &  -\frac{s^2\beta^2c_3(1-c_3)}{2} \nonumber \\
 & & \times \mE_{\u^{(i_1,1)},\u^{(2)},\u^{(i_1,3)},u^{(4)}}  \sum_{i_1=1}^{l} \sum_{p_1=1}^{l}
\frac{(C^{(i_1)})^{s}(C^{(p_1)})^{s}(\|\x^{(i_1)}\|_2\|\x^{(p_1)}\|_2-(\x^{(i_1)})^T\x^{(p_1)})}{Z^{2-c_3}} \nonumber \\
& & \times \lp\sum_{i_2=1}^{l}\sum_{p_2=1}^{l}
\frac{A^{(i_1,i_2)}A^{(p_1,p_2)}}{C^{(i_1)}C^{(p_1)}}
(\|\y^{(i_2)}\|_2\|\y^{(p_2)}\|_2-(\y^{(i_2)})^T\y^{(p_2)})\rp.
\end{eqnarray}
Depending on the value of $c_3$ one can now discuss the sign of $\frac{d\psi_*(\calX,\calY,\beta,s,t)}{dt}$ and whether function $\psi_*(\calX,\calY,\beta,s,t)$ is non-increasing (decreasing) or non-decreasing (increasing) in $t$. The obtained results are summarized in the following theorem and its a corollary.
\begin{theorem}
\label{thm:liftthm2}
  Assume the setup of Theorem \ref{thm:thm1}. We then have
\begin{eqnarray}\label{eq:liftco1eq1}
\psi_*(\calX,\calY,\beta,s,c_3,t)= \psi_*(\calX,\calY,\beta,s,c_3,0)+\int_{0}^{t}\frac{d\psi_*(\calX,\calY,\beta,s,c_3,t)}{dt}dt,
\end{eqnarray}
where $\frac{d\psi_*(\calX,\calY,\beta,s,c_3,t)}{dt}$ is given by (\ref{eq:liftconalt2}).
\end{theorem}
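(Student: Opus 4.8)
The plan is to mirror exactly the argument used for Theorem \ref{thm:thm1}, now applied to $\psi_*$ instead of $\psi$. The only structural difference is that $\psi_*$ is the expectation of $Z^{c_3}$ rather than of $\frac{1}{\beta|s|\sqrt{n}}\log Z$, so when one differentiates in $t$ the chain rule produces a factor $c_3 Z^{c_3-1}$ in place of the factor $Z^{-1}$, and consequently every occurrence of $Z^{-2}$ coming from differentiating $1/Z$ is replaced by $(c_3-1)Z^{c_3-2}$. First I would record the identity (\ref{eq:liftgenanal9})--(\ref{eq:liftgenanal11}), which is obtained by the same elementary differentiation of $Z=\sum_{i_1}(C^{(i_1)})^s$ as in (\ref{eq:genanal9})--(\ref{eq:genanal11}), the only change being the prefactor $sc_3/Z^{1-c_3}$ in place of $s/(Z\beta|s|\sqrt{n})$. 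This reduces the problem to evaluating the same four Gaussian-integration-by-parts expectations as in Sections \ref{sec:hand1}--\ref{sec:hand4}, but with $Z$ replaced by $Z^{1-c_3}$ in the denominators.

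Next I would carry out those four computations. The key point — already flagged in the excerpt — is that the integration-by-parts computations of Sections \ref{sec:hand1}--\ref{sec:hand4} go through verbatim: the covariance structures $\mE(\u_j^{(i_1,1)}\u_j^{(p_1,1)})=\frac{(\x^{(i_1)})^T\x^{(p_1)}}{\|\x^{(i_1)}\|_2\|\x^{(p_1)}\|_2}$, $\mE(\u_j^{(2)}\u_j^{(2)})=1$, $\mE(\u^{(i_1,3)}\u^{(p_1,3)})=\frac{(\x^{(i_1)})^T\x^{(p_1)}}{\|\x^{(i_1)}\|_2\|\x^{(p_1)}\|_2}$, $\mE(u^{(4)}u^{(4)})=1$ are unchanged, and the derivatives of $A^{(\cdot,\cdot)}$, $C^{(\cdot)}$ and $Z$ with respect to the underlying Gaussians are as in (\ref{eq:genanal15})--(\ref{eq:genanal18b}) and their analogues. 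The one arithmetic change is that $\frac{d}{d\cdot}(Z^{-1}) = -Z^{-2}\frac{dZ}{d\cdot}$ becomes, after folding in the outer $Z^{c_3-1}$, a contribution of the form $(c_3-1)Z^{c_3-2}\frac{dZ}{d\cdot}$; equivalently one writes $\frac{d}{d\cdot}\big(\frac{(C^{(i_1)})^{s-1}A^{(i_1,i_2)}}{Z^{1-c_3}}\big)$ and notes the $Z^{-(1-c_3)}$ differentiates to $-(1-c_3)Z^{-(2-c_3)}\frac{dZ}{d\cdot}$. This yields (\ref{eq:liftgenanal19}), (\ref{eq:liftgenAanal19}), (\ref{eq:liftgenBanal19}), (\ref{eq:liftgenCanal19}) directly from their Section \ref{sec:gencon} counterparts, with the bookkeeping factor $(1-c_3)$ multiplying the cross terms and $Z^{2-c_3}$ replacing $Z^2$.

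Then I would assemble the four pieces into $\frac{d\psi_*}{dt}$ exactly as in Section \ref{sec:conalt}: substituting into (\ref{eq:liftgenanal11}) gives (\ref{eq:liftconalt1})--(\ref{eq:liftconalt1a}) with the overall constant now $\frac{sc_3(1-c_3)}{2}$ (the $c_3$ comes from the outer derivative, the $(1-c_3)$ from the integration-by-parts cross terms, and the $\frac{1}{2}$ from the $\frac{d}{dt}\sqrt{t}$ and $\frac{d}{dt}\sqrt{1-t}$ factors). The ``diagonal'' self-terms (those not carrying the $(1-c_3)$ factor) cancel among $S_{1,*},\dots,S_{4,*}$ in the same way they did in Section \ref{sec:conalt}; more precisely, one repeats the algebraic regroupings (\ref{eq:conalt1b}) and (\ref{eq:conalt1c}) — which combine $S_{2,*}-S_{1,*}$ and $S_{4,*}-S_{3,*}$ into the same Gram-matrix-difference form $\|\x^{(i_1)}\|_2\|\x^{(p_1)}\|_2-(\x^{(i_1)})^T\x^{(p_1)}$ times the $\y$-inner-product terms — and arrives at (\ref{eq:liftconalt2}). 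Finally, the integral representation (\ref{eq:liftco1eq1}) is just the fundamental theorem of calculus applied to $t\mapsto\psi_*(\calX,\calY,\beta,s,c_3,t)$ together with the continuity/differentiability of $\psi_*$ in $t$ on $(0,1)$, which is inherited from the smoothness of the integrand and dominated convergence, exactly as in the Corollary to Theorem \ref{thm:thm1}.

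I do not expect a genuine obstacle here, since the whole argument is a transcription with a modified power of $Z$; the one place that requires care is the constant-chasing in the two regroupings (\ref{eq:conalt1b})--(\ref{eq:conalt1c}) — making sure the $Z^{2-c_3}$, the $s\beta^2$, and the $c_3(1-c_3)$ factors are tracked consistently through the double-sum manipulation and that the diagonal terms indeed cancel. (Note that, unlike Theorem \ref{thm:thm1}, the sign of $\frac{d\psi_*}{dt}$ is now governed by the sign of $c_3(1-c_3)$, so no monotonicity claim is asserted in the statement — only the identity (\ref{eq:liftco1eq1}) — and the proof need not resolve the sign.)
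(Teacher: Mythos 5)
Your proposal is correct and follows essentially the same route as the paper: the paper's proof of Theorem \ref{thm:liftthm2} is precisely the derivation of (\ref{eq:liftconalt2}) by rerunning the Section \ref{sec:gencon} integration-by-parts computations with $Z^{-1}$ replaced by $Z^{c_3-1}$ (so that $-Z^{-2}$ becomes $(c_3-1)Z^{c_3-2}$), followed by the fundamental theorem of calculus. Your constant-tracking ($sc_3(1-c_3)/2$ and then $s^2\beta^2c_3(1-c_3)/2$ after the regrouping) matches (\ref{eq:liftconalt1}) and (\ref{eq:liftconalt2}) exactly.
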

\begin{proof}
  Follows automatically through the above discussion.
\end{proof}
\begin{corollary}\label{cor:liftcor1}
  Assume the setup of Theorem \ref{thm:liftthm2}.

  1) If $0< c_3< 1$ then $\frac{d\psi_*(\calX,\calY,\beta,s,c_3,t)}{dt}<0$ and $\psi_*(\calX,\calY,\beta,s,c_3,t)$ is decreasing in $t$ and one finds the following comparison principle
\begin{eqnarray}\label{eq:liftco2aeq1}
\lim_{\beta\rightarrow\infty}\psi_*(\calX,\calY,\beta,s,c_3,0) \geq \lim_{\beta\rightarrow\infty} \psi_*(\calX,\calY,\beta,s,c_3,t)\geq \lim_{\beta\rightarrow\infty}\psi_*(\calX,\calY,\beta,s,c_3,1).
\end{eqnarray}

  2) If $c_3> 1$ or $c_3< 0$ then $\frac{d\psi_*(\calX,\calY,\beta,s,c_3,t)}{dt}>0$ and $\psi_*(\calX,\calY,\beta,s,c_3,t)$ is increasing in $t$ and one finds the following comparison principle
\begin{eqnarray}\label{eq:liftco2aeq2}
\lim_{\beta\rightarrow\infty}\psi_*(\calX,\calY,\beta,s,c_3,0) \leq \lim_{\beta\rightarrow\infty} \psi_*(\calX,\calY,\beta,s,c_3,t)\leq \lim_{\beta\rightarrow\infty}\psi_*(\calX,\calY,\beta,s,c_3,1).
\end{eqnarray}
\end{corollary}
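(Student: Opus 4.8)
The plan is to read the sign of the derivative straight off the closed form (\ref{eq:liftconalt2}) and then integrate. First I would record the elementary positivity facts about the ingredients of (\ref{eq:liftconalt2}). Each $C^{(i_1)}=\sum_{i_2=1}^{l}A^{(i_1,i_2)}$ is a sum of exponentials, hence $C^{(i_1)}>0$, and therefore $Z=\sum_{i_1=1}^{l}(C^{(i_1)})^{s}>0$ for every real $s$; consequently $(C^{(i_1)})^{s}(C^{(p_1)})^{s}/Z^{2-c_3}>0$ and $A^{(i_1,i_2)}A^{(p_1,p_2)}/(C^{(i_1)}C^{(p_1)})\geq 0$ for all admissible indices and all real $c_3$. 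By the Cauchy--Schwarz inequality, applied in $\mR^{n}$ and in $\mR^{m}$ respectively, $\|\x^{(i_1)}\|_2\|\x^{(p_1)}\|_2-(\x^{(i_1)})^T\x^{(p_1)}\geq 0$ and $\|\y^{(i_2)}\|_2\|\y^{(p_2)}\|_2-(\y^{(i_2)})^T\y^{(p_2)}\geq 0$. Hence the quadruple sum multiplying the scalar prefactor inside the expectation in (\ref{eq:liftconalt2}) is, pointwise in the Gaussian variables, a sum of products of nonnegative quantities, so it is nonnegative, and taking expectation preserves this.

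Next I would combine this with the prefactor. Since $s^2\geq 0$ and $\beta\geq 0$, the sign of $\frac{d\psi_*(\calX,\calY,\beta,s,c_3,t)}{dt}$ coincides with that of $-c_3(1-c_3)$. When $0<c_3<1$ one has $c_3(1-c_3)>0$, so $\frac{d\psi_*}{dt}\leq 0$ on $[0,1]$; when $c_3>1$ or $c_3<0$ one has $c_3(1-c_3)<0$, so $\frac{d\psi_*}{dt}\geq 0$ on $[0,1]$. The inequality is strict as long as $s\beta\neq 0$ and the two sets are in general position, i.e. the $\x^{(i)}$ are not all positively collinear and the $\y^{(i)}$ are not all positively collinear, so that the Cauchy--Schwarz gaps above are not identically zero; in the degenerate situations $\psi_*$ is constant in $t$ and the asserted comparisons hold with equality.

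Finally I would integrate. By the representation (\ref{eq:liftco1eq1}) of Theorem \ref{thm:liftthm2}, a nonpositive (resp.\ nonnegative) derivative on $[0,1]$ forces $t\mapsto\psi_*(\calX,\calY,\beta,s,c_3,t)$ to be non-increasing (resp.\ non-decreasing) there, so for every $t\in[0,1]$ the chain $\psi_*(\cdot,0)\geq\psi_*(\cdot,t)\geq\psi_*(\cdot,1)$ holds in case~1 and the reversed chain holds in case~2. Passing to $\beta\to\infty$ on each term preserves these non-strict inequalities and yields (\ref{eq:liftco2aeq1}) and (\ref{eq:liftco2aeq2}); the existence of the $\beta\to\infty$ limits is exactly the collapse of the $Z^{c_3}$ expressions to max / minmax forms carried out in Section \ref{sec:betainf}.

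I expect no substantial obstacle: given (\ref{eq:liftconalt2}) the whole argument is a sign computation plus an integration. The only places that need care are (i) checking that $Z>0$ and the fractional powers $Z^{c_3-1}$, $Z^{c_3-2}$ are well defined for arbitrary real $s,c_3$, so that the pointwise nonnegativity of the integrand is legitimate; and (ii) justifying the exchange of the $\beta\to\infty$ limit with the monotonicity in $t$, which is precisely why the corollary is stated in the limiting form rather than at finite $\beta$ (although the finite-$\beta$ monotone chain also holds verbatim).
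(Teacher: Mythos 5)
Your proposal is correct and follows essentially the same route as the paper: the paper's proof is simply to read the sign of the closed-form derivative (\ref{eq:liftconalt2}) from the factor $-c_3(1-c_3)$ times a manifestly nonnegative (Cauchy--Schwarz) expectation, and then integrate via (\ref{eq:liftco1eq1}). Your additional remarks on the positivity of $Z$ and $C^{(i_1)}$, on the degenerate collinear cases where strictness fails, and on passing the non-strict inequalities through the $\beta\rightarrow\infty$ limit are careful elaborations of details the paper leaves implicit.
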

\begin{proof}
  Follows again automatically by the arguments presented above.
\end{proof}

%%%%%%%%%%%%%%%%%%%%%%%%%%%%%%%%%%%%%%%%%%%%%%%%%%%%%%%%%%%%%%%%%
\subsection{$\beta\rightarrow \infty$}
\label{sec:liftbetainf}
%%%%%%%%%%%%%%%%%%%%%%%%%%%%%%%%%%%%%%%%%%%%%%%%%%%%%%%%%%%%%%%%%

Following into the footsteps of \cite{Stojnicgscomp16}, in this section we discuss in a bit deeper detail one of the key consequences of the lifting procedure introduced above (as it will be soon clear, it will connect to some of the comparison principles that we utilized in e.g. \cite{StojnicLiftStrSec13,StojnicMoreSophHopBnds10,StojnicRicBnds13}). As in \cite{Stojnicgscomp16}, we will assume that $\beta$ is large, say $\beta\rightarrow\infty$ and that the scaling $c_3\leftarrow \frac{c^{(s)}_3}{\beta}$, where $c^{(s)}_3$ is a finite positive real number, is in place as well. Clearly, one then has $c_3(1-c_3)\geq 0$ which implies $\frac{\psi_*(\calX,\calY,\beta,s,c_3,t)}{dt}\leq 0$. That, on the other hand, also means that function $\psi_*(\calX,\calY,\beta,s,c_3,t)$ is decreasing in $t$. We summarize this adaptation into the following corollary of Theorem \ref{thm:liftthm2}.
\begin{corollary}\label{cor:liftcor2}
  Assume the setup of Theorem \ref{thm:liftthm2}. Let $c_3\leftarrow \frac{c^{(s)}_3}{\beta}$, where $c^{(s)}_3$ is a finite positive real number. Then $\psi_*(\calX,\calY,\beta,s,c_3,t)$ is decreasing in $t$ and we have the following comparison principle
\begin{eqnarray}\label{eq:liftliftco2eq2}
\lim_{\beta\rightarrow\infty}\psi_*(\calX,\calY,\beta,s,\frac{c^{(s)}_3}{\beta},0) \geq \lim_{\beta\rightarrow\infty} \psi_*(\calX,\calY,\beta,s,\frac{c^{(s)}_3}{\beta},t)\geq \lim_{\beta\rightarrow\infty}\psi_*(\calX,\calY,\beta,s,\frac{c^{(s)}_3}{\beta},1).
\end{eqnarray}
\end{corollary}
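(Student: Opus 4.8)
The plan is to read off the sign of the derivative from the closed form already obtained in Theorem \ref{thm:liftthm2} and then let $\beta$ grow. First I would invoke (\ref{eq:liftconalt2}), which writes $\frac{d\psi_*(\calX,\calY,\beta,s,c_3,t)}{dt}$ as $-\frac{s^2\beta^2c_3(1-c_3)}{2}$ times an expectation. The integrand of that expectation is manifestly non-negative: each $A^{(i_1,i_2)}$, each $C^{(i_1)}$ and $Z$ is a (finite) sum of exponentials, hence strictly positive, while the Cauchy--Schwarz inequality gives $\|\x^{(i_1)}\|_2\|\x^{(p_1)}\|_2-(\x^{(i_1)})^T\x^{(p_1)}\ge 0$ and $\|\y^{(i_2)}\|_2\|\y^{(p_2)}\|_2-(\y^{(i_2)})^T\y^{(p_2)}\ge 0$ for every choice of indices; a sum of products of non-negative quantities stays non-negative. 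Since $s^2\beta^2\ge 0$ as well, the sign of $\frac{d\psi_*}{dt}$ is governed entirely by the scalar $-c_3(1-c_3)$ — this is precisely the dichotomy recorded in Corollary \ref{cor:liftcor1}.

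Next I would substitute the prescribed scaling $c_3\leftarrow c^{(s)}_3/\beta$. Because $c^{(s)}_3$ is a fixed finite positive number, for all $\beta$ large enough (in particular, for all $\beta$ relevant to the $\beta\rightarrow\infty$ limit) one has $0<c_3<1$, hence $c_3(1-c_3)>0$ and therefore $\frac{d\psi_*(\calX,\calY,\beta,s,c^{(s)}_3/\beta,t)}{dt}\le 0$. Thus, at each such fixed $\beta$, the map $t\mapsto\psi_*(\calX,\calY,\beta,s,c^{(s)}_3/\beta,t)$ is non-increasing on $[0,1]$, and the integral identity (\ref{eq:liftco1eq1}) of Theorem \ref{thm:liftthm2} immediately yields the sandwich $\psi_*(\cdot,0)\ge\psi_*(\cdot,t)\ge\psi_*(\cdot,1)$ for $t\in[0,1]$.

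Finally I would pass to the limit $\beta\rightarrow\infty$. A weak inequality between functions of $\beta$ is preserved on taking the limit, so the chain of inequalities above transfers verbatim to the limits and gives (\ref{eq:liftliftco2eq2}). The only genuine technical point — and the one I would spend care on — is the justification of differentiating $\psi_*(\calX,\calY,\beta,s,c_3,t)=\mE Z^{c_3}$ under the expectation sign and of the Gaussian integration-by-parts steps underlying (\ref{eq:liftconalt2}); however these are exactly the manipulations already carried out in Section \ref{sec:gencon} and Section \ref{sec::liftder}, with the requisite finiteness and dominated-convergence bounds following from the fact that $Z$ is a finite sum of exponentials of jointly Gaussian linear forms, all of whose moments are finite. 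A secondary bookkeeping point is that the limits in (\ref{eq:liftliftco2eq2}) are to be understood as existing (or, if one prefers, replaced by $\limsup$ on the left-hand side and $\liminf$ on the right), but no new estimate beyond those already supplied by Theorem \ref{thm:liftthm2} is needed.
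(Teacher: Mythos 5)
Your proposal is correct and follows essentially the same route as the paper: the paper's own justification is precisely that the scaling $c_3\leftarrow c^{(s)}_3/\beta$ forces $0<c_3<1$ for large $\beta$, so $c_3(1-c_3)\geq 0$ makes the right-hand side of (\ref{eq:liftconalt2}) non-positive (the expectation being non-negative by Cauchy--Schwarz and the positivity of $A$, $C$, $Z$, exactly as you argue), whence monotonicity and the limiting sandwich. Your added remarks on preserving weak inequalities in the limit and on justifying the differentiation under the expectation are sensible elaborations of steps the paper leaves implicit, not deviations from its argument.
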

\begin{proof}
  Follows automatically by the above arguments.
\end{proof}
Paralleling further \cite{Stojnicgscomp16}, we below also study the following limiting behavior of $\xi_*(\calX,\beta,s,\frac{c^{(s)}_3}{\beta})$, i.e.
\begin{eqnarray}\label{eq:liftliftbetainf1}
 \log\lim_{\beta\rightarrow\infty} \xi_*(\calX,\calY,\beta,s,\frac{c^{(s)}_3}{\beta})
& = & \log\lim_{\beta\rightarrow\infty} \mE_{G,u^{(4)}}\lp \sum_{i_1=1}^{l}\lp\sum_{i_2=1}^{l}e^{\beta \lp (\y^{(i_2)})^T
 G\x^{(i_1)}+\|\x^{(i_1)}\|_2\|\y^{(i_2)}\|_2 u^{(4)}\rp} \rp^{s}\rp^{\frac{c_3^{(s)}}{\beta}}\nonumber \\
 & = &  \log \mE_{G,u^{(4)}}\lp e^{c^{(s)}_3\max_{\x^{(i_1)}\in\calX} s\max_{\y^{(i_2)}\in\calY}\lp (\y^{(i_2)})^T
 G\x^{(i_1)}+\|\x^{(i_1)}\|_2\|\y^{(i_2)}\|_2 u^{(4)}\rp} \rp.\nonumber \\
\end{eqnarray}

%%%%%%%%%%%%%%%%%%%%%%%%%%%%%%%%%%%%%%%%%%%%%%%%%%%%%%%%%%%%%%%%%
\subsubsection{$s=1$ -- a lifted Slepian's (fully bilinear) max comparison}
\label{sec:liftliftbetainfsplus1}
%%%%%%%%%%%%%%%%%%%%%%%%%%%%%%%%%%%%%%%%%%%%%%%%%%%%%%%%%%%%%%%%%

Choosing $s=1$ in (\ref{eq:liftliftbetainf1}) gives
\begin{eqnarray}\label{eq:liftliftbetainfsplus1}
 \log\lim_{\beta\rightarrow\infty} \xi_*(\calX,\calY,\beta,1,\frac{c^{(s)}_3}{\beta})
 & = &  \log \mE_{G,u^{(4)}}\lp e^{c^{(s)}_3\max_{\x^{(i_1)}\in\calX} \max_{\y^{(i_2)}\in\calY}\lp (\y^{(i_2)})^T
 G\x^{(i_1)}+\|\x^{(i_1)}\|_2\|\y^{(i_2)}\|_2 u^{(4)}\rp} \rp.\nonumber \\
\end{eqnarray}
Now, we recall that $\xi_*(\calX,\calY,\beta,1,\frac{c^{(s)}_3}{\beta})=\psi_*(\calX,\calY,\beta,1,\frac{c^{(s)}_3}{\beta},1)$ and based on the above we also find
\begin{multline}\label{eq:liftliftbetainfsplus2}
\log \mE_{G,u^{(4)}}\lp e^{c^{(s)}_3\max_{\x^{(i_1)}\in\calX} \max_{\y^{(i_2)}\in\calY}\lp (\y^{(i_2)})^T
 G\x^{(i_1)}+\|\x^{(i_1)}\|_2\|\y^{(i_2)}\|_2 u^{(4)}\rp} \rp  =
 \log\lim_{\beta\rightarrow\infty} \psi_*(\calX,\calY,\beta,1,\frac{c^{(s)}_3}{\beta},1) \\
 \leq     \log\lim_{\beta\rightarrow\infty} \psi_*(\calX,\calY,\beta,1,\frac{c^{(s)}_3}{\beta},t)
 \leq     \log\lim_{\beta\rightarrow\infty} \psi_*(\calX,\calY,\beta,1,\frac{c^{(s)}_3}{\beta},0) \\
 =
\log \mE_{\u^{(2)},\h}\lp e^{c^{(s)}_3\lp \max_{\x^{(i_1)}\in\calX}\max_{\y^{(i_2)}\in\calY}\lp \|\x^{(i_1)}\|_2(\y^{(i_2)})^T\u^{(2)}+\|\y^{(i_2)}\|_2\h^T\x^{(i_1)}\rp\rp} \rp.
\end{multline}
Taking beginning and end in (\ref{eq:liftliftbetainfsplus2}) establishes basically the same comparison that we utilized in \cite{StojnicMoreSophHopBnds10}, which is the following Gordon's upgrade of the Slepian's (so to say fully bilinear) max principle
\begin{multline}\label{eq:liftliftbetainfsplus3}
 \log \mE_{G,u^{(4)}} e^{c^{(s)}_3\max_{\x^{(i_1)}\in \calX,\y^{(i_2)}\in \calY} \lp (\y^{(i_2)})^T G\x^{(i_1)}+\|\x^{(i_1)}\|_2\|\y^{(i_2)}\|_2u^{(4)}\rp} \\
    \leq \log \mE_{\u^{(2)},\h} e^{c^{(s)}_3\max_{\x^{(i_1)}\in \calX,\y^{(i_2)}\in \calY} \lp \|\x^{(i_1)}\|_2(\y^{(i_2)})^T \u^{(2)}+ \|\y^{(i_2)}\|_2\h^T\x^{(i_1)}\rp}.
\end{multline}
Similarly to what we observed in \cite{Stojnicgscomp16}, (\ref{eq:liftliftbetainfsplus1}) and (\ref{eq:liftliftbetainfsplus2}) can be viewed as a lifted Slepian (fully bilinear) max comparison principle. As discussed in \cite{Stojnicgscomp16} (see also, e.g. \cite{StojnicMoreSophHopBnds10}) the above lifting procedure is often the only known tool that can significantly improve on the original Slepian's principle (needles to say, Theorem \ref{thm:liftthm2} is a much stronger concept of which the above form is only a special case).

\textbf{\underline{\emph{Numerical results}}}

We also conducted a set of numerical experiments to complement the theoretical results that we presented above. The numerical results that we obtained through these experiments are shown in Figure \ref{fig:liftedbetainfsplus1xnorm1psi} and Table \ref{tab:liftedbetainfsplus1xnorm1psi}. We selected all parameters as in Section \ref{sec:gencon} with $\beta=10$ as a way to emulate $\beta\rightarrow\infty$ and $c_3=.1$ (to get a bit more reliable results, we this time averaged all random quantities over a set of $8e4$ experiments). The right part of the figure also contains how the obtained results compare to the same scenario but with no lifting. To have that comparison make sense, as in \cite{Stojnicgscomp16}, we worked with the adjusted $\psi_*(\cdot)$. Basically, in Table \ref{tab:liftedbetainfsplus1xnorm1psi}, the values for $\psi_*(\calX,\calY,\beta,s,c_3,t)$ are given in two forms: 1) the value itself and 2) the adjusted value $\lp\frac{1}{\beta |s| c_3}\log\lp \psi_*(\calX,\calY,\beta,s,c_3,t)\rp-\frac{\beta|s| c_3}{2}\rp/\sqrt{n}$ (as in \cite{Stojnicgscomp16}, the adjusted value acts in a way as a connection between $\psi_*(\calX,\calY,\beta,s,c_3,t)$ and $\psi(\calX,\calY,\beta,s,t)$). As can be seen from both, Figure \ref{fig:liftedbetainfsplus1xnorm1psi} and Table \ref{tab:liftedbetainfsplus1xnorm1psi}, there is a solid agreement between all the presented results. Moreover, $\beta=10$ seems to be a solid approximation of $\beta\rightarrow\infty$ (the values for $\lim_{\beta\rightarrow\infty}\psi_*$ were obtained with $c_3^{(s)}=c_3\beta$ so that the fairness of the comparison is ensured). The so-called flattening effect, discussed in \cite{Stojnicgscomp16}, appears as a consequence of the lifting procedure and tightens the corresponding comparisons from Section \ref{sec:gencon}.

%\begin{figure}[htb]
%%\begin{minipage}[b]{.5\linewidth}
%\centering
%\centerline{\epsfig{figure=splus1xnorm1beta10psiPAP.eps,width=11.5cm,height=8cm}}
%%\end{minipage}
%%\begin{minipage}[b]{.5\linewidth}
%%\centering
%%\centerline{\epsfig{figure=finprerral08.eps,width=9cm,height=6.5cm}}
%%\end{minipage}
%\caption{$\psi_*(\calX,\calY,\beta,s,c_3,t)$ as a function of $t$; $m=5$, $n=5$, $l=10$, $\calX=\calX^{+}$, $\beta=10$, $s=1$}
%\label{fig:liftedsplus1xnorm1psi}
%\end{figure}

\begin{figure}[htb]
\begin{minipage}[b]{.5\linewidth}
\centering
\centerline{\epsfig{figure=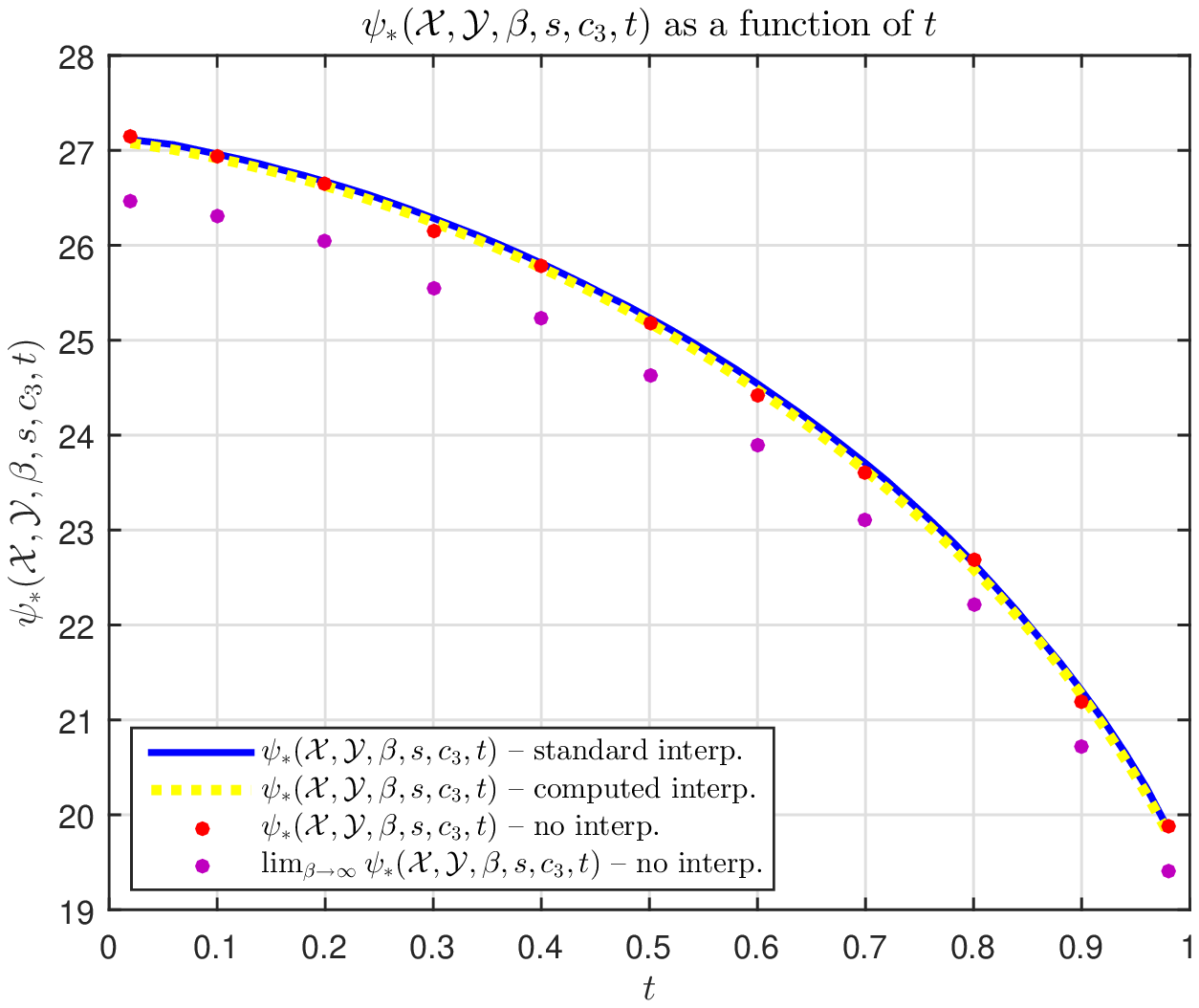,width=9cm,height=7cm}}
%\end{minipage}
%\begin{minipage}[b]{.5\linewidth}
%\centering
%\centerline{\epsfig{figure=finprerral08.eps,width=9cm,height=6.5cm}}
\end{minipage}
\begin{minipage}[b]{.5\linewidth}
\centering
\centerline{\epsfig{figure=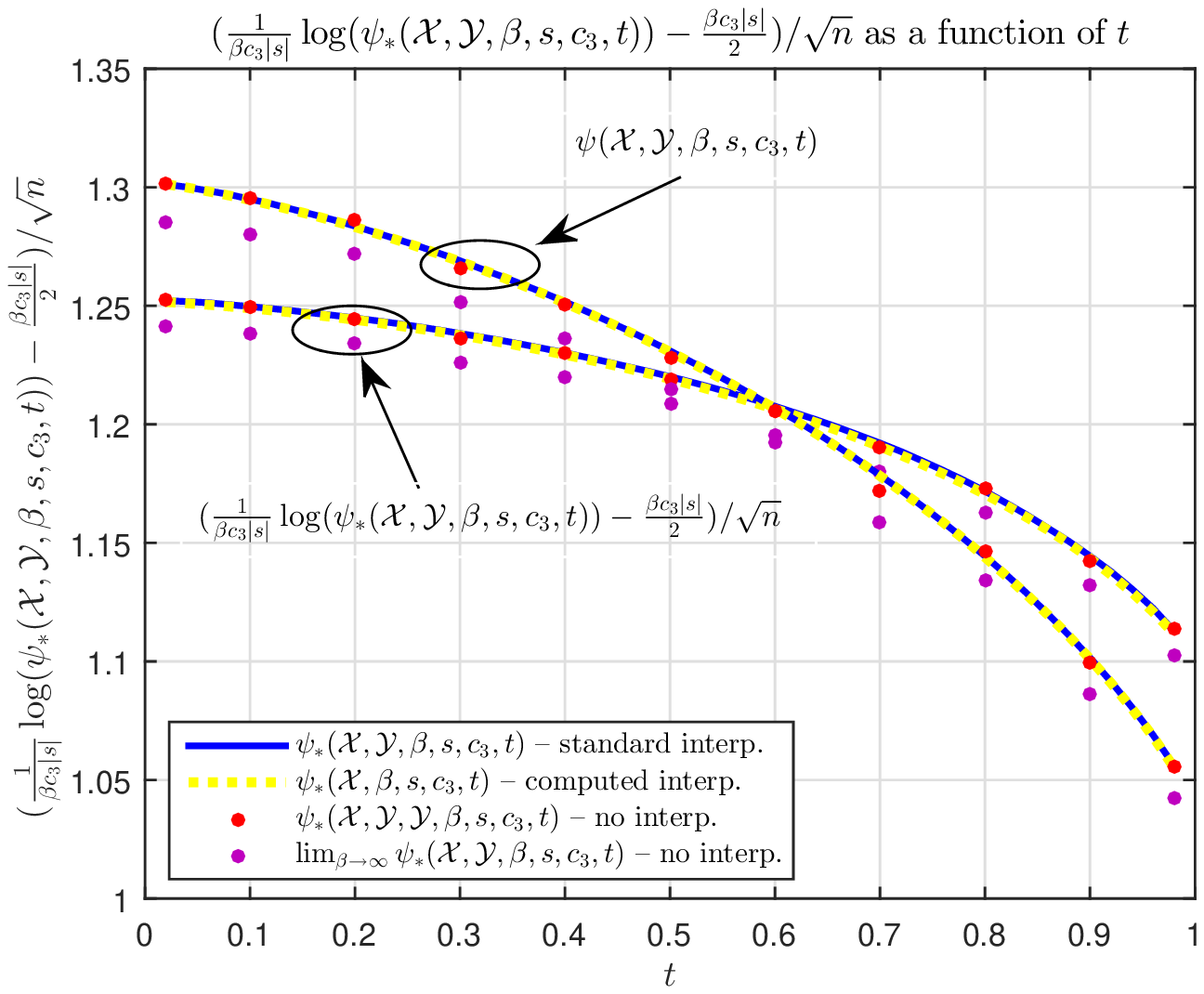,width=9cm,height=7cm}}
%\end{minipage}
%\begin{minipage}[b]{.5\linewidth}
%\centering
%\centerline{\epsfig{figure=finprerral08.eps,width=9cm,height=6.5cm}}
\end{minipage}
\caption{Left -- $\psi_*(\calX,\calY,\beta,s,c_3,t)$ as a function of $t$; $m=5$, $n=5$, $l=10$, $\calX=\calX^{+}$, $\calY=\calY^{+}$, $\beta=10$, $s=1$, $c_3=.1$; right -- comparison between adjusted $\psi_*(\calX,\calY,\beta,s,c_3,t)$ and $\psi_*(\calX,\calY,\beta,s,t)$ for $\beta=10$ (lifting versus no-lifting)}
\label{fig:liftedbetainfsplus1xnorm1psi}
\end{figure}

\begin{table}[h]
\caption{Simulated results --- $m=5$, $n=5$, $l=10$, $\calX=\calX^{+}$, $\calY=\calY^{+}$, $\beta=10$, $s=1$, $c_3=.1$}\vspace{.1in}
\hspace{-0in}\centering
{\small
\begin{tabular}{||c||c|c|c|c|c|c||}
\hline\hline
$ t$  &  $\frac{d\psi_*}{dt}$; (\ref{eq:liftgenanal11}) & $\frac{d\psi_*}{dt}$;  (\ref{eq:liftconalt2}) & $\psi_*$;  (\ref{eq:liftgenanal11}) and (\ref{eq:liftco1eq1}) & $\psi_*$; (\ref{eq:liftconalt2}) and (\ref{eq:liftco1eq1}) & $\psi_*$;  (\ref{eq:liftgenanal8})& $\lim_{\beta\rightarrow\infty}\psi_*$;  (\ref{eq:liftgenanal8})\\  \hline\hline
$ 0.1 $ & $ -2.4507 $ & $ -2.3279 $ & $\bl{\mathbf{ 26.9605 / 1.2497 }}$ & $\bl{\mathbf{ 26.9168 / 1.2490 }}$ & $\mathbf{ 26.9459 / 1.2494 }$& $\prp{\mathbf{ 26.2966 / 1.2385 }}$\\ \hline
$ 0.2 $ & $ -3.3235 $ & $ -3.2290 $ & $\bl{\mathbf{ 26.6710 / 1.2449 }}$ & $\bl{\mathbf{ 26.6268 / 1.2441 }}$ & $\mathbf{ 26.6599 / 1.2447 }$& $\prp{\mathbf{ 26.0429 / 1.2342 }}$\\ \hline
$ 0.3 $ & $ -4.3431 $ & $ -4.2313 $ & $\bl{\mathbf{ 26.2861 / 1.2384 }}$ & $\bl{\mathbf{ 26.2420 / 1.2376 }}$ & $\mathbf{ 26.1391 / 1.2358 }$& $\prp{\mathbf{ 25.5472 / 1.2256 }}$\\ \hline
$ 0.4 $ & $ -5.2185 $ & $ -5.1793 $ & $\bl{\mathbf{ 25.8102 / 1.2302 }}$ & $\bl{\mathbf{ 25.7632 / 1.2294 }}$ & $\mathbf{ 25.7952 / 1.2299 }$& $\prp{\mathbf{ 25.2189 / 1.2198 }}$\\ \hline
$ 0.5 $ & $ -6.2090 $ & $ -6.3270 $ & $\bl{\mathbf{ 25.2360 / 1.2201 }}$ & $\bl{\mathbf{ 25.1825 / 1.2192 }}$ & $\mathbf{ 25.1692 / 1.2189 }$& $\prp{\mathbf{ 24.6243 / 1.2091 }}$\\ \hline
$ 0.6 $ & $ -7.6488 $ & $ -7.5322 $ & $\bl{\mathbf{ 24.5415 / 1.2076 }}$ & $\bl{\mathbf{ 24.4755 / 1.2064 }}$ & $\mathbf{ 24.4265 / 1.2055 }$&  $\prp{\mathbf{ 23.9020 / 1.1958 }}$\\ \hline
$ 0.7 $ & $ -8.8894 $ & $ -9.1072 $ & $\bl{\mathbf{ 23.7031 / 1.1921 }}$ & $\bl{\mathbf{ 23.6265 / 1.1906 }}$ & $\mathbf{ 23.5953 / 1.1901 }$& $\prp{\mathbf{ 23.0935 / 1.1804 }}$\\ \hline
$ 0.8 $ & $ -11.6414 $ & $ -11.2737 $ & $\bl{\mathbf{ 22.6518 / 1.1718 }}$ & $\bl{\mathbf{ 22.5879 / 1.1705 }}$ & $\mathbf{ 22.6897 / 1.1726 }$& $\prp{\mathbf{ 22.2064 / 1.1629 }}$\\ \hline
$ 0.9 $ & $ -14.8057 $ & $ -14.8608 $ & $\bl{\mathbf{ 21.3083 / 1.1445 }}$ & $\bl{\mathbf{ 21.2622 / 1.1435 }}$ & $\mathbf{ 21.1912 / 1.1420 }$& $\prp{\mathbf{ 20.7216 / 1.1320 }}$
\\ \hline\hline
\end{tabular}
}
\label{tab:liftedbetainfsplus1xnorm1psi}
\end{table}

%%%%%%%%%%%%%%%%%%%%%%%%%%%%%%%%%%%%%%%%%%%%%%%%%%%%%%%%%%%%%%%%%
\subsubsection{$s=-1$ -- a lifted Gordon's fully bilinear minmax comparison}
\label{sec:liftbetainfsminus1}
%%%%%%%%%%%%%%%%%%%%%%%%%%%%%%%%%%%%%%%%%%%%%%%%%%%%%%%%%%%%%%%%%

Choosing $s=-1$ in (\ref{eq:liftliftbetainf1}) gives
\begin{eqnarray}\label{eq:liftliftbetainfsmin1}
 \log\lim_{\beta\rightarrow\infty} \xi_*(\calX,\calY,\beta,-1,\frac{c^{(s)}_3}{\beta})
 & = &  \log \mE_{G,u^{(4)}}\lp e^{c^{(s)}_3\max_{\x^{(i_1)}\in\calX} -\min_{\y^{(i_2)}\in\calY}\lp (\y^{(i_2)})^T
 G\x^{(i_1)}+\|\x^{(i_1)}\|_2\|\y^{(i_2)}\|_2 u^{(4)}\rp} \rp.\nonumber \\
\end{eqnarray}
Analogously to (\ref{eq:liftliftbetainfsplus2}) we now have
\begin{multline}\label{eq:liftliftbetainfsplus2}
\log \mE_{G,u^{(4)}}\lp e^{c^{(s)}_3\max_{\x^{(i_1)}\in\calX}- \max_{\y^{(i_2)}\in\calY}\lp (\y^{(i_2)})^T
 G\x^{(i_1)}+\|\x^{(i_1)}\|_2\|\y^{(i_2)}\|_2 u^{(4)}\rp} \rp  =
 \log\lim_{\beta\rightarrow\infty} \psi_*(\calX,\calY,\beta,-1,\frac{c^{(s)}_3}{\beta},1) \\
 \leq     \log\lim_{\beta\rightarrow\infty} \psi_*(\calX,\calY,\beta,-1,\frac{c^{(s)}_3}{\beta},t)
 \leq     \log\lim_{\beta\rightarrow\infty} \psi_*(\calX,\calY,\beta,-1,\frac{c^{(s)}_3}{\beta},0) \\
 =
\log \mE_{\u^{(2)},\h}\lp e^{c^{(s)}_3\lp \max_{\x^{(i_1)}\in\calX}-\max_{\y^{(i_2)}\in\calY}\lp \|\x^{(i_1)}\|_2(\y^{(i_2)})^T\u^{(2)}+\|\y^{(i_2)}\|_2\h^T\x^{(i_1)}\rp\rp} \rp.
\end{multline}
Taking beginning and end in (\ref{eq:liftliftbetainfsmin2}) establishes again exactly the same inequality as in the comparison principle we utilized in \cite{StojnicMoreSophHopBnds10} (as well as in e.g. \cite{StojnicLiftStrSec13,StojnicRicBnds13}). Namely, a Gordon's minmax principle was the key mechanism that we relied on in \cite{StojnicMoreSophHopBnds10} to obtain
\begin{multline}\label{eq:liftliftbetainfsmin2}
 \log \mE_{G,u^{(4)}} e^{c^{(s)}_3\max_{\x^{(i_1)}\in \calX}\min_{\y^{(i_2)}\in \calY} \lp (\y^{(i_2)})^T G\x^{(i_1)}+\|\x^{(i_1)}\|_2\|\y^{(i_2)}\|_2u^{(4)}\rp} \\
    \leq \log \mE_{\u^{(2)},\h} e^{c^{(s)}_3\max_{\x^{(i_1)}\in \calX}\min_{\y^{(i_2)}\in \calY} \lp \|\x^{(i_1)}\|_2(\y^{(i_2)})^T \u^{(2)}+ \|\y^{(i_2)}\|_2\h^T\x^{(i_1)}\rp}.
\end{multline}
Following the reasoning discussed above, one can think of (\ref{eq:liftliftbetainfsmin1}) and (\ref{eq:liftliftbetainfsmin2}) as being a lifted Gordon's minmax comparison principle (more on how useful this lifting strategy turns out to be can be found in, e.g. \cite{StojnicMoreSophHopBnds10,StojnicLiftStrSec13,StojnicRicBnds13}). As earlier, we emphasize that this form is only a special case of a much stronger concept introduced in Theorem \ref{thm:liftthm2}.

Following what we observed in \cite{Stojnicgscomp16}, when $\|\x^{(i_1)}\|_2=1,1\leq i_1\leq l$, and $\|\y^{(i_2)}\|_2=1,1\leq i_2\leq l$, we have the following rather elegant consequence of the above (basically for any $\beta$ and $s=1$)
\begin{multline}\label{eq:liftliftbetainfsmin3}
\frac{(c_3^{(s)})^2}{2}+c_3^{(s)}\mE_{G} \max_{\x^{(i_1)}\in \calX} s\max_{\y^{(i_2)}\in \calY}\lp (\y^{(i_2)})^T
 G\x^{(i_1)}\rp  =  \frac{(c_3^{(s)})^2}{2}+ \mE_{G} \log e^{c_3^{(s)} \max_{\x^{(i_1)}\in \calX} s\max_{\y^{(i_2)}\in \calY}\lp (\y^{(i_2)})^T
 G\x^{(i_1)}\rp}  \\
\leq  \log \mE_{G,u^{(4)}} e^{c_3^{(s)} \max_{\x^{(i_1)}\in \calX} s\max_{\y^{(i_2)}\in \calY}\lp (\y^{(i_2)})^T
 G\x^{(i_1)}+u^{(4)}\rp} \leq \log \mE_{G,u^{(4)}}\lp\sum_{i_1=1}^{l} \lp\sum_{i_2=1}^{l} e^{\beta \lp (\y^{(i_2)})^T
 G\x^{(i_1)}+u^{(4)}\rp}\rp^s \rp^{c_3} \\
=  \log \mE_{G,u^{(4)}} \psi_*(\calX,\calY,\beta,s,c_3,1) \leq  \log \mE_{G,u^{(4)},\u^{(2)},\h} \psi_*(\calX,\calY,\beta,s,c_3,t) \\
\leq  \log \mE_{G,u^{(4)},\u^{(2)},\h} \psi_*(\calX,\calY,\beta,s,c_3,0) = \log \mE_{G,u^{(4)},\u^{(2)},\h} \lp\sum_{i_1=1}^{l} \lp\sum_{i_2=1}^{l} e^{\beta \lp \|\x^{(i_1)}\|_2(\y^{(i_2)})^T
 \u^{(2)}+\|\y^{(i_2)}\|_2\h^T\x^{(i_1)}\rp}\rp^s \rp^{c_3}.\\
\end{multline}
From (\ref{eq:liftliftbetainfsmin3}) we find
\begin{eqnarray}\label{eq:liftliftbetainfsmin3a}
\mE_{G} \max_{\x^{(i_1)}\in \calX} s\max_{\y^{(i_2)}\in \calY}\lp (\y^{(i_2)})^T
 G\x^{(i)}\rp  & \leq & \frac{1}{c_3^{(s)}} \log \mE_{G,u^{(4)},\u^{(2)},\h} \psi_*(\calX,\calY,\beta,s,c_3,t)-\frac{c_3^{(s)}}{2} \nonumber \\
& = & \frac{1}{\beta c_3} \log \mE_{G,u^{(4)},\u^{(2)},\h} \psi_*(\calX,\calY,\beta,s,c_3,t)-\frac{\beta c_3}{2}.
\end{eqnarray}
For $t=1$, (\ref{eq:liftliftbetainfsmin3}) and (\ref{eq:liftliftbetainfsmin3a}) give
\begin{multline}\label{eq:liftliftbetainfsmin4}
\mE_{G} \max_{\x^{(i_1)}\in \calX} s\max_{\y^{(i_2)}\in \calY}\lp (\y^{(i_2)})^T
 G\x^{(i)}\rp   \leq  \frac{1}{c_3^{(s)}} \log \mE_{G,u^{(4)},\u^{(2)},\h} \psi_*(\calX,\calY,\beta,s,c_3,0)-\frac{c_3^{(s)}}{2}  \\
=  \frac{1}{c_3^{(s)}} \log \mE_{G,u^{(4)},\u^{(2)},\h} \lp\sum_{i_1=1}^{l} \lp\sum_{i_2=1}^{l} e^{\beta \lp \|\x^{(i_1)}\|_2(\y^{(i_2)})^T
 \u^{(2)}+\|\y^{(i_2)}\|_2\h^T\x^{(i_1)}\rp}\rp^s \rp^{c_3}-\frac{c_3^{(s)}}{2} \\
=  \frac{1}{\beta c_3} \log \mE_{G,u^{(4)},\u^{(2)},\h} \lp\sum_{i_1=1}^{l} \lp\sum_{i_2=1}^{l} e^{\beta \lp \|\x^{(i_1)}\|_2(\y^{(i_2)})^T
 \u^{(2)}+\|\y^{(i_2)}\|_2\h^T\x^{(i_1)}\rp}\rp^s \rp^{c_3}-\frac{\beta c_3}{2}.
\end{multline}
Finally, for $\beta\rightarrow\infty$ (and $c_3=\frac{c_3^{(2)}}{\beta}$) we have
\begin{multline}\label{eq:liftliftbetainfsmin5}
\mE_{G} \max_{\x^{(i_1)}\in \calX} s\max_{\y^{(i_2)}\in \calY}\lp (\y^{(i_2)})^T
 G\x^{(i)}\rp \\ \leq  \frac{1}{c_3^{(s)}} \log \mE_{u^{(2)},\h} \lp e^{c_3^{(s)}\max_{\x^{(i_1)}\in\calX} s \max_{\y^{(i_2)}\in\calY}\lp \|\x^{(i_1)}\|_2(\y^{(i_2)})^T
 \u^{(2)}+\|\y^{(i_2)}\|_2\h^T\x^{(i_1)}\rp}\rp-\frac{c_3^{(s)}}{2}.
\end{multline}
 Connecting the first inequality in (\ref{eq:liftliftbetainfsmin3}) and (\ref{eq:liftliftbetainfsmin2}) ensures that (\ref{eq:liftliftbetainfsmin5}) in particular remains true even when $s=-1$. Of course, (\ref{eq:liftliftbetainfsmin5}) is
basically one of the key features of the mechanisms we introduced and utilized in e.g. \cite{StojnicMoreSophHopBnds10,StojnicLiftStrSec13,StojnicRicBnds13}. One can also take the $\beta\rightarrow\infty$ limit for any $t$ to obtain for any sign $s$ a bit stronger (though probably often less useful)
\begin{multline}\label{eq:liftliftbetainfsmin6}
\mE_{G} \max_{\x^{(i_1)}\in \calX} s\max_{\y^{(i_2)}\in \calY}\lp (\y^{(i_2)})^T
 G\x^{(i)}\rp \\ \leq  \frac{1}{c_3^{(s)}} \log \mE_{u^{(2)},\h} \lp e^{c_3^{(s)}\max_{\x^{(i_1)}\in\calX} s \max_{\y^{(i_2)}\in\calY}f_{1,h}(\x^{(i_1)},\y^{(i_2)},G,u^{(4)},\u^{(2)},\h,t)}\rp-\frac{c_3^{(s)}}{2},
\end{multline}
where
\begin{multline}\label{eq:liftliftbetainfsmin7}
f_{1,h}(\x^{(i_1)},\y^{(i_2)},G,u^{(4)},\u^{(2)},\h,t) = \sqrt{t}(\y^{(i_2)})^TG\x^{(i_1)}+ \sqrt{1-t}\|\x^{(i_1)}\|_2(\y^{(i_2)})^T
 \u^{(2)} \\+\sqrt{t}\|\x^{(i_1)}\|_2\|\y^{(i_2)}\|_2u^{(4)}+\sqrt{1-t}\|\y^{(i_2)}\|_2\h^T\x^{(i_1)}.
\end{multline}

\textbf{\underline{\emph{Numerical results}}}

Figure \ref{fig:liftedbetainfsmin1xnorm1psi} and Table \ref{tab:liftedbetainfsmin1xnorm1psi} contain the results that we obtained through the numerical simulations. All parameters are again the same as earlier, including $\beta=10$ which again in a way emulates $\beta\rightarrow\infty$ and $c_3=.1$ (everything is averaged over a set of $5e4$ experiments). We again observe from both, Figure \ref{fig:liftedbetainfsmin1xnorm1psi} and Table \ref{tab:liftedbetainfsmin1xnorm1psi}, that there is a solid agreement between all the presented results (with $\beta=10$ again being a pretty good approximation of $\beta\rightarrow\infty$). As earlier, the right part of the figure shows again appearance of the flattening effect which is one of the key consequences of the lifting procedure. Clearly, this then tightens the corresponding comparisons from Section \ref{sec:gencon}. We should also add that $c_3=.1$ is not necessarily the best value that one can take to have the flattening effect at its full power (both, here as well as when we discussed the lifting of the Slepian's max principle earlier). However, we selected a value that is reasonably close to the one that would tighten the corresponding comparisons from Section \ref{sec:gencon} the most.

%\begin{figure}[htb]
%%\begin{minipage}[b]{.5\linewidth}
%\centering
%\centerline{\epsfig{figure=splus1xnorm1beta10psiPAP.eps,width=11.5cm,height=8cm}}
%%\end{minipage}
%%\begin{minipage}[b]{.5\linewidth}
%%\centering
%%\centerline{\epsfig{figure=finprerral08.eps,width=9cm,height=6.5cm}}
%%\end{minipage}
%\caption{$\psi_*(\calX,\calY,\beta,s,c_3,t)$ as a function of $t$; $m=5$, $n=5$, $l=10$, $\calX=\calX^{+}$, $\beta=10$, $s=1$}
%\label{fig:liftedsplus1xnorm1psi}
%\end{figure}

\begin{figure}[htb]
\begin{minipage}[b]{.5\linewidth}
\centering
\centerline{\epsfig{figure=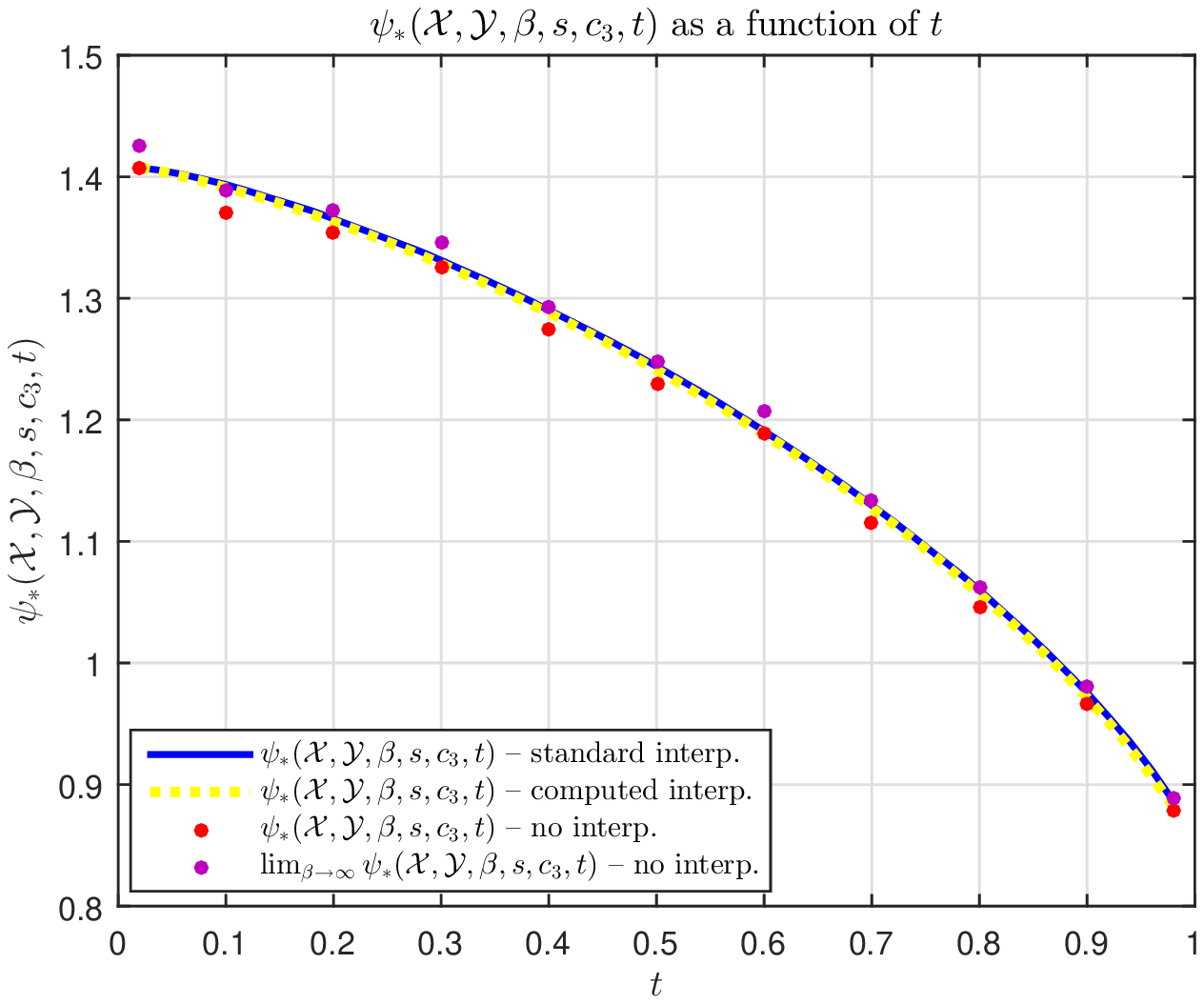,width=9cm,height=7cm}}
%\end{minipage}
%\begin{minipage}[b]{.5\linewidth}
%\centering
%\centerline{\epsfig{figure=finprerral08.eps,width=9cm,height=6.5cm}}
\end{minipage}
\begin{minipage}[b]{.5\linewidth}
\centering
\centerline{\epsfig{figure=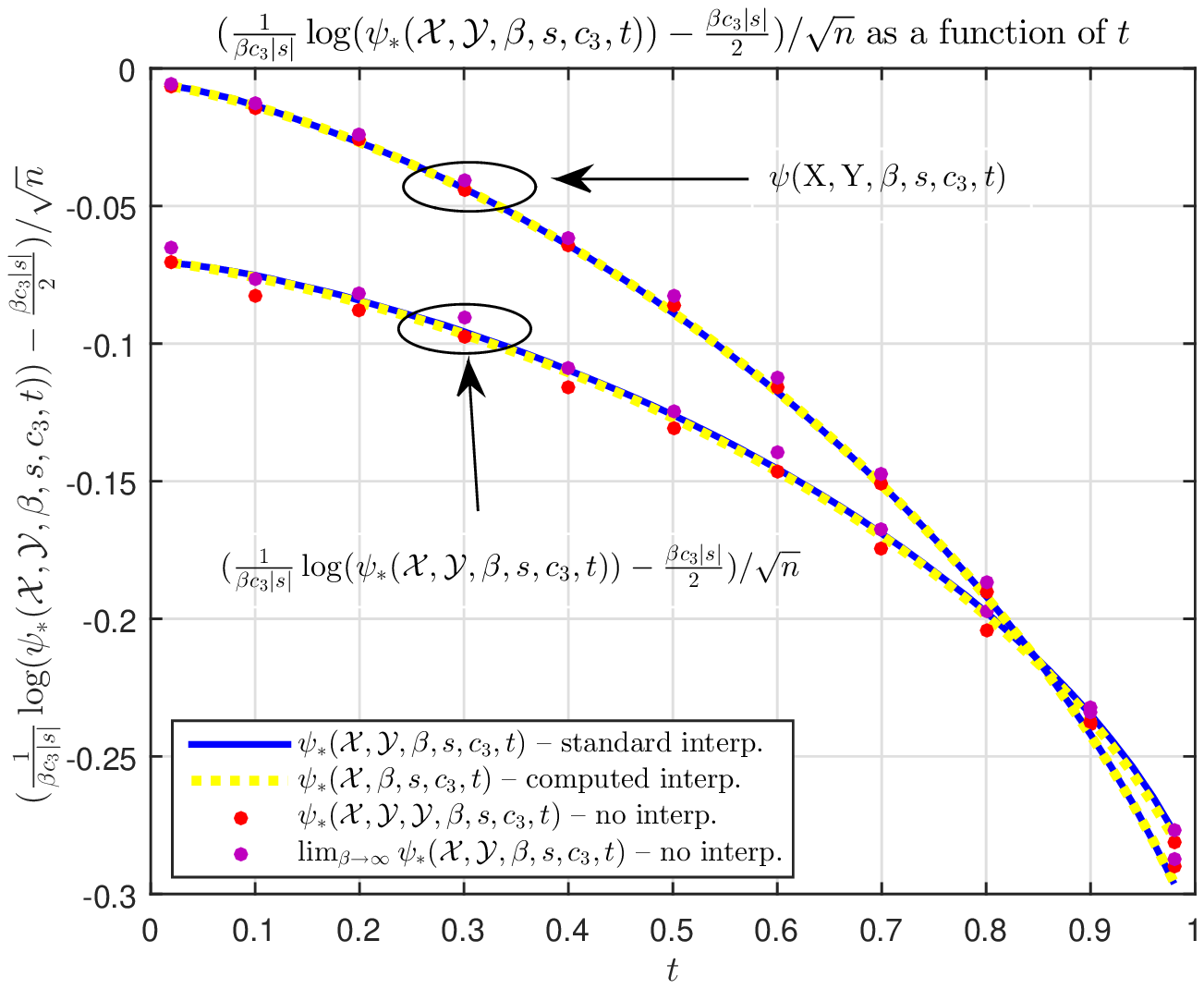,width=9cm,height=7cm}}
%\end{minipage}
%\begin{minipage}[b]{.5\linewidth}
%\centering
%\centerline{\epsfig{figure=finprerral08.eps,width=9cm,height=6.5cm}}
\end{minipage}
\caption{Left -- $\psi_*(\calX,\calY,\beta,s,c_3,t)$ as a function of $t$; $m=5$, $n=5$, $l=10$, $\calX=\calX^{+}$, $\calY=\calY^{+}$, $\beta=10$, $s=-1$, $c_3=.1$; right -- comparison between adjusted $\psi_*(\calX,\calY,\beta,s,c_3,t)$ and $\psi_*(\calX,\calY,\beta,s,t)$ for $\beta=10$ (lifting versus no-lifting)}
\label{fig:liftedbetainfsmin1xnorm1psi}
\end{figure}

{\footnotesize
\begin{table}[h]
\caption{Simulated results --- $m=5$, $n=5$, $l=10$, $\calX=\calX^{+}$, $\calY=\calY^{+}$, $\beta=10$, $s=-1$, $c_3=.1$}\vspace{.1in}
\hspace{-0in}\centering
{\small
\begin{tabular}{||c||c|c|c|c|c|c||}\hline\hline
$ t$  &  $\frac{d\psi_*}{dt}$; (\ref{eq:liftgenanal11}) & $\frac{d\psi_*}{dt}$;  (\ref{eq:liftconalt2}) & $\psi_*$;  (\ref{eq:liftgenanal11}) and (\ref{eq:liftco1eq1}) & $\psi_*$; (\ref{eq:liftconalt2}) and (\ref{eq:liftco1eq1}) & $\psi_*$;  (\ref{eq:liftgenanal8})& $\lim_{\beta\rightarrow\infty}\psi_*$;  (\ref{eq:liftgenanal8})\\  \hline\hline
$ 0.1 $ & $ -0.2114 $ & $ -0.2353 $ & $\bl{\mathbf{ 1.3936 / -0.0752 }}$ & $\bl{\mathbf{ 1.3912 / -0.0759 }}$ & $\mathbf{ 1.3708 / -0.0826 }$& $\prp{\mathbf{ 1.3889 / -0.0767 }}$\\ \hline
$ 0.2 $ & $ -0.3276 $ & $ -0.3069 $ & $\bl{\mathbf{ 1.3654 / -0.0843 }}$ & $\bl{\mathbf{ 1.3630 / -0.0851 }}$ & $\mathbf{ 1.3537 / -0.0882 }$& $\prp{\mathbf{ 1.3729 / -0.0819 }}$\\ \hline
$ 0.3 $ & $ -0.3804 $ & $ -0.3735 $ & $\bl{\mathbf{ 1.3313 / -0.0956 }}$ & $\bl{\mathbf{ 1.3284 / -0.0966 }}$ & $\mathbf{ 1.3259 / -0.0974 }$& $\prp{\mathbf{ 1.3454 / -0.0909 }}$\\ \hline
$ 0.4 $ & $ -0.4439 $ & $ -0.4279 $ & $\bl{\mathbf{ 1.2903 / -0.1096 }}$ & $\bl{\mathbf{ 1.2879 / -0.1105 }}$ & $\mathbf{ 1.2736 / -0.1155 }$& $\prp{\mathbf{ 1.2933 / -0.1086 }}$\\ \hline
$ 0.5 $ & $ -0.4930 $ & $ -0.4961 $ & $\bl{\mathbf{ 1.2441 / -0.1259 }}$ & $\bl{\mathbf{ 1.2412 / -0.1270 }}$ & $\mathbf{ 1.2301 / -0.1310 }$& $\prp{\mathbf{ 1.2489 / -0.1242 }}$\\ \hline
$ 0.6 $ & $ -0.5535 $ & $ -0.5604 $ & $\bl{\mathbf{ 1.1908 / -0.1455 }}$ & $\bl{\mathbf{ 1.1882 / -0.1465 }}$ & $\mathbf{ 1.1887 / -0.1463 }$& $\prp{\mathbf{ 1.2079 / -0.1391 }}$\\ \hline
$ 0.7 $ & $ -0.6523 $ & $ -0.6363 $ & $\bl{\mathbf{ 1.1297 / -0.1691 }}$ & $\bl{\mathbf{ 1.1275 / -0.1699 }}$ & $\mathbf{ 1.1156 / -0.1747 }$& $\prp{\mathbf{ 1.1334 / -0.1676 }}$\\ \hline
$ 0.8 $ & $ -0.7503 $ & $ -0.7578 $ & $\bl{\mathbf{ 1.0599 / -0.1976 }}$ & $\bl{\mathbf{ 1.0567 / -0.1989 }}$ & $\mathbf{ 1.0452 / -0.2038 }$& $\prp{\mathbf{ 1.0615 / -0.1969 }}$\\ \hline
$ 0.9 $ & $ -0.9074 $ & $ -0.9375 $ & $\bl{\mathbf{ 0.9755 / -0.2347 }}$ & $\bl{\mathbf{ 0.9715 / -0.2365 }}$ & $\mathbf{ 0.9672 / -0.2385 }$& $\prp{\mathbf{ 0.9806 / -0.2324 }}$
\\ \hline\hline
\end{tabular}
}
\label{tab:liftedbetainfsmin1xnorm1psi}
\end{table}
}

%%%%%%%%%%%%%%%%%%%%%%%%%%%%%%%%%%%%%%%%%%%%%%%%%%%%%%%%%%%%%%%%%%%%%%%%%%%%%%%%
\section{Conclusion}
\label{sec:liftconc}
%%%%%%%%%%%%%%%%%%%%%%%%%%%%%%%%%%%%%%%%%%%%%%%%%%%%%%%%%%%%%%%%%%%%%%%%%%%%%%%%

A collection of very powerful statistical comparison results is presented. We first introduced a general comparison concept that we call fully bilinear. Then we showed how such a concept can be upgraded through a lifting procedure. All our theoretical findings we then complemented with an extensive set of numerical results. These were obtained trough simulations and are observed to be in an excellent agreement with the theoretical predictions. Moreover, for both, the general and the lifted strategy, we showed that they contain as special cases the well known Slepian's max and Gordon's minmax comparison principles. Since many of the results that we created in various fields of mathematics in recent years utilize as starting points these well-known principles, the results presented here make all of them substantially more general and fully self-contained.

The mechanisms that we presented here seem like a very powerful self-sustainable tool which can be used for various extensions. Typically these extensions require a few rather routine modifications of the main concepts presented here and in a couple of our earlier works. For the extensions that we find to be of particular interest we will present the needed modifications as well as the final results that one can obtain through them in a few separate papers.

%\newpage1
%\setcounter{page}{1}
\begin{singlespace}
\bibliographystyle{plain}
\bibliography{gscompyxRefs}
\end{singlespace}

\end{document}